\documentclass[12pt]{article}

\usepackage{fancyhdr, amsmath,amsfonts, url, subfigure, dsfont, mathrsfs, graphicx, epsfig, subfigure, amsthm, tikz,xcolor}
\bibliographystyle{plain}
\usepackage{comment}

    \usepackage{epsfig}
    \usepackage{graphicx}
    \usepackage{color}

\linespread{1.0}

%\bibpunct{[}{]}{,}{a}{,}{,}%
\setlength{\textwidth}{6.4 in}
\setlength{\oddsidemargin}{0 in}

\newlength{\guillotine}
\setlength{\guillotine}{-\headheight}
\addtolength{\guillotine}{-\headsep}
\setlength{\topmargin}{\guillotine}
\setlength{\textheight}{9 in}

\pagestyle{fancyplain}

\newtheorem{thm}{Theorem}[section]

\newtheorem{lemma}[thm]{Lemma}

\newtheorem{prop}[thm]{Proposition}

\newtheorem{definition}[thm]{Definition}
\newtheorem{example}[thm]{Example}

\theoremstyle{remark}
\newtheorem{rem}[thm]{Remark}
\theoremstyle{plain}
\newcommand{\thistheoremname}{}
\newtheorem{genericthm}[thm]{\thistheoremname}

\newtheorem*{genericthm*}{\thistheoremname}
\newenvironment{namedthm*}[1]
  {\renewcommand{\thistheoremname}{#1}%
   \begin{genericthm*}}
  {\end{genericthm*}}
%\newtheoremstyle{example}

\begin{document}
%\title{xxxx}

\title{The growth and distribution of large circles  on 
 translation surfaces}

\author{P. Colognese and M. Pollicott\thanks{Supported by the ERC Grant 833802-Resonances and  EPSRC grant EP/T001674/1.
The second author is  very grateful to A. Math\'e for  earlier discussions on an earlier simplified model.  
}}
\newcommand{\Addresses}{{
  \bigskip
  \footnotesize
  P.~Colognese, \textsc{Department of Mathematics, Warwick University,
    Coventry, CV4 7AL-UK}\par\nopagebreak
  \textit{E-mail address} \texttt{p.colognese@warwick.ac.uk}\\
  
  M.~Pollicott, \textsc{Department of Mathematics, Warwick University,
    Coventry, CV4 7AL-UK}\par\nopagebreak
  \textit{E-mail address} \texttt{masdbl@warwick.ac.uk}
}}

\date{
   Warwick University\\[2ex]
    \today
}

\maketitle

\abstract{
We consider circles on a translation surface $X$,  consisting of points joined to a common center point by a geodesic of length $R$. We show that as $R \to \infty$ these circles distribute to a measure on $X$ which is equivalent to the area.  In the last section we consider analogous results for closed geodesics.}

\section{Introduction}
Given a closed  Riemannian  surface of constant curvature and genus $g \geq 2$, several authors have considered 
 circles consisting of those points which 
%are at the same 
%distance, corresponding to the radius, from 
are joined by geodesics of a common length (which can be thought of as radii) to 
a common point (which can be viewed as  the center).   Equivalently, we can consider the projections 
of circles from its  universal cover.
 As the radius tends to infinity, these circles become equidistributed with respect to Haar measure.  In the case $g=1$ this is an easy exercise 
%\cite{St}
and for $g \geq 2$ this was shown by Randol \cite{randol}.
%It is a simple  observation  that large circles are equidistributed on the flat torus 
 %$\mathbb R^2/\mathbb Z^2$. 
%A classical result of Randol
%showed  analogous equidistribution result for large circles on Riemann surfaces of constant negative curvature $\kappa=-1$ \cite{randol}.   In that particular case the probability measures on the circles actually converge to the normalized Haar measure.   This, in turn, was a natural extension of the
%Translation surfaces
%, or, as they are sometimes called flat surfaces, 
% have been the subject of considerable interest for many years \cite{Yoccoz}, \cite{Veech}.
%  They bring together ideas on topology, geometry and dynamics and they  have close connections to billiards in polygons and abelian differentials on Riemann surfaces.
 %In this note we shall consider  the distribution of large circles on  translation surfaces.
 %an aspect of their behaviour which has many similarities to that of hyperbolic surfaces.  
%More precisely, we  will consider asymptotic properties of  translation surfaces $X$.
%As the  

 We want to consider the  natural generalization to  translation surfaces $X$.
  More precisely, given a point $x \in  X$ and 
$R > 0$,   we can  naturally associate a one dimensional curve 
%   {\it geodesic circle}
 %  \textcolor{red}{(Comment: Perhaps we could give this kind of circle a name as we refer to them later, at least in some pictures ... geodesic circles?)}
  $\mathcal C(x, R) \subset X$ consisting of those points
  % $y$ 
  on $X$ joined
to $x$  
   by a geodesic of length $R$.  
On $X$ the radial geodesics 
are  either 
% locally  distance minimizing  curves
%and for translation  surfaces  consist of either 
straight line segments or 
%   suitable  
concatenations of 
   straight line segments and saddle connections joining  singularities.   
%We want to compare the distribution of large circles and %closed geodesics on $X$.
%\subsection{Distribution of circles}
% Given a point $x \in  X$ and 
%$R > 0$   we can  naturally associate a one dimensional 
  % {\it geodesic circle}
 %  \textcolor{red}{(Comment: Perhaps we could give this kind of circle a name as we refer to them later, at least in some pictures ... geodesic circles?)}
%  $\mathcal C(x, R) \subset X$ consisting of points $y$ on $X$ joined
%to the point  $x$  
  % by a geodesic of length $R$.  These can be thought of as playing the role of the  ``radii" of the ``circle".  In fact 
  % $\mathcal C(x, R)$ consists of unions of Euclidean arcs centered at singularities.
  Let $\ell(\mathcal C(x, R))$ denote the total length 
  of the one dimensional curve $\mathcal C(x, R)$.
  % in the natural sense.
%   This is the same as the summation of the lengths of the Euclidean arcs whose union is the curve $\mathcal C(x, R)$.
  Despite  the surface being flat (except at the finite set of singularities $\Sigma$), the length of $\mathcal C(x, R)$ actually grows exponentially in $R$ because of how geodesics behave at the singularities.
  In particular, we have that
  $$
h(X) := \lim_{R \to \infty} \frac{1}{R} \log
\ell(\mathcal C(x, R)) \eqno(1.1)
$$
exists and is positive.
We will call $h(X)$ the {\it entropy} of the surface.  
In his PhD thesis,
Dankwart \cite{dankwart} originally  defined  (volume) entropy in terms of \textcolor{black}{ orbital counting.
 In \cite{cp}, we defined a notion of entropy in terms of the rate of growth of the volume of a ball in the universal cover, by analogy with the definition due to Manning for Riemannian manifolds \cite{manning}.}  
More precisely, let $\widetilde X$ be the universal cover for $X$ and let $\mathcal B(\widetilde x, R)$
be a ball in $\widetilde X$ of radius $R>0$ centered at $\widetilde x$.  If we then write $\hbox{\rm Vol}_{\widetilde X}(\mathcal  B(\widetilde x, R))$ for the volume of the ball, then the original definition of (volume) entropy  is
$$
h(X) = \lim_{R \to \infty}  \frac{1}{R} \log \hbox{\rm Vol}_{\widetilde X}( B(\widetilde x, R)). \eqno(1.2)
$$
However, these different definitions are easily seen to be equivalent.  The definition (1.1) in terms of the length  
$\ell(\mathcal C(x, R)) $
 has the slight conceptual  advantage 
that it does not necessitate going to the universal cover since the length of the curve $\mathcal C(x, R)$
 has a natural interpretation on $X$.

Our first main result improves on (1.1)  by giving  the natural asymptotic formula for the length of the curve.

\begin{namedthm*}{Theorem A}[Asymptotic length formula]\label{asymptotic}
There exists $C = C(x) > 0$ such that 
$$
	\ell(\mathcal C(x, R)) \sim C e^{h(X) R} \hbox{ as } R \to \infty  \quad 
	\left(\hbox{i.e.,} \lim_{R\to \infty} \frac{\ell(\mathcal C(x, R))}{ C e^{h(X) R}} = 1\right).
	$$
%	i.e., 
%	$\lim_{R\to \infty} \frac{\ell(\mathcal C(x, R))}{ C e^{h(X) R}} = 1$.
\end{namedthm*}

The asymptotic formula in Theorem A  is reminiscent of the simpler corresponding result
 for Riemannian surfaces of constant negative curvature 
(without singularities) of genus $g \geq 2$ which is easily deduced from  \cite{huber}. See also work by Margulis \cite{margulis}.

Next we want to give a distributional result for the circles $\mathcal C(x, R)$.
We  define   a family of natural probability measures $\mu_R$ supported on the  sets $\mathcal C(x, R)$ for $R > 0$.  These correspond to the normalized arc length measure on  the curve $\mathcal C(x, R)$.

\begin{definition}
 We can define a family of probability measures $\mu_R$ ($R>0$) on $X$ by
$$
\mu_R(A)=\frac{\ell(\mathcal C(x,R)\cap A)}{\ell(\mathcal C(x,R))},\hbox{ for Borel sets $A\subset X$,}
$$
%A\subset X\backslash \Sigma$
%\int f  d\mu_R = \frac{1}{\ell(\mathcal C(x, R))}
%\int_{\mathcal C(x, R)}
%f(x) d\lambda(x),
%\hbox{ for $f \in C(X, \mathbb R)$},

where $\ell(\mathcal C(x,R)\cap A)$ denotes the one dimensional measure of $\mathcal C(x,R) \cap A$. 
  \end{definition}
  
The next result describes  the convergence of the probability measures $\mu_R$ as the radius $R$ tends to infinity.

 \begin{namedthm*}{Theorem B}[Circle distribution result]\label{distribution}
 The sequence of measures $(\mu_R)_{R>0}$ converge in the weak-star topology to a measure $\mu$ which is equivalent to the volume measure 
 $\hbox{\rm Vol}_X$
 on $X$, i.e.
 $
 \lim_{R \to \infty}
 \int f d\mu_R =  \int f d\mu
  $
  for any $f \in C(X)$.
  %does not coincide with the normalized 
 %volume.
 \end{namedthm*}
 
 We note that this is not quite a traditional equidistribution result in the sense that  although 
 $\mu$  is  equivalent to the volume measure $\hbox{\rm Vol}_X$, 
 the Radon-Nikodym derivative $d\mu/d\hbox{\rm Vol}_X$ is not constant.
 %In particular, we see that for any $
 %\epsilon > 0$ we have that 
% $\mathcal C(x, R)$

Our method of proof for both theorems is based on an approach using complex functions and Tauberian theorems 
developed in \cite{cp}.

 In \S 2 we describe the basic definitions and examples.   In \S 3 we present the basic approach to describing the 
 growth of circles (and the proof of Theorem A).
 %\marginnotem{Paul: This needs to be rewritten when the rest of the paper has been sorted.}
 %\ref{asymptotic}
  In \S4 we use a similar method to prove a growth rate for weighted balls on the universal cover of $X$ and in \S 5   we use the results from \S 4 to deduce Theorem B. 
  Finally in \S 6  we discuss analogous results to Theorem A and B for closed geodesics.

 \section{Translation surfaces and geodesics}
 In this section, we recall a convenient  definition  of   translation surfaces and their basic properties.  A good reference for this material (and more background) are the surveys \cite{zorich} and \cite{wright}.   
We will use the same notation as in \cite{cp}.
  
  Roughly speaking, a translation surface $X$ is a closed surface endowed with a flat metric except at, possibly, a finite number of singular points such that there is a well defined notion of north at every non-singular point.

Singularities on translation surfaces are cone-points. To see what this means, consider the following construction: let $k\in\mathbb N$ and take $(k+1)$ copies of the upper half plane with the usual metric and $(k+1)$ copies of the lower half plane. Then glue them together along the half infinite rays $[0,\infty)$ and $(-\infty,0]$ in cyclic order (Figure \ref{fig:nbhdsing}). 

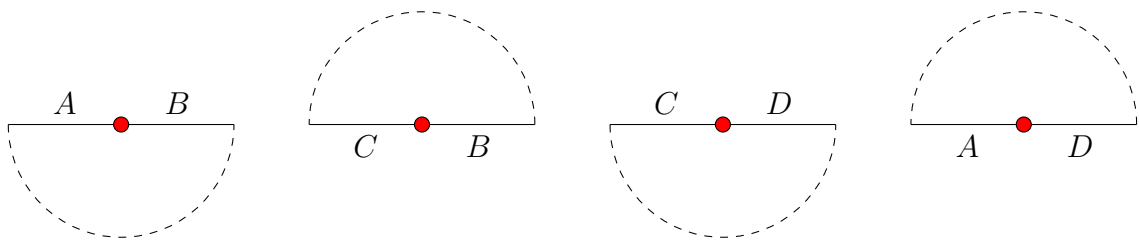
\begin{figure}[h!]
    \centering
    \begin{tikzpicture}
      \draw (0,0)--(3,0);
      \draw (4,0)--(7,0);
      \draw (8,0)--(11,0);
      \draw (12,0)--(15,0);
      
      \draw[dashed] (0,0) arc (180:360:1.5);
        \draw[dashed] (7,0) arc (0:180:1.5);
              \draw[dashed] (8,0) arc (180:360:1.5);
                    \draw[dashed] (15,0) arc (0:180:1.5);
                    
      \draw [fill= red] (1.5,0) circle (0.1);
            \draw [fill= red] (5.5,0) circle (0.1);
                  \draw [fill= red] (9.5,0) circle (0.1);
                        \draw [fill= red] (13.5,0) circle (0.1);

          \node[above] at (0.75,0) {$A$};
                    \node[above] at (2.25,0) {$B$};
                \node[above] at (8.75,0) {$C$};
                      \node[above] at (10.25,0) {$D$};
                  \node[below] at (4.75,0) {$C$};
              \node[below] at (6.25,0) {$B$};
              \node[below] at (12.75,0) {$A$};
             \node[below] at (14.25,0) {$D$};

    \end{tikzpicture}
    \caption{Four half-disks of radius cyclic fashion. A cone point of angle $4\pi$ on a translation surface has a neighbourhood isometric to a neighbourhood of the the origin in the picture. }
    \label{fig:nbhdsing}
\end{figure}

There are a few equivalent definitions which appear in the literature; however, we will present the one which is most suited to our needs.

\begin{definition}
A translation surface is a closed topological surface $X$, together with a finite set of points $\Sigma$ and an atlas of charts to $\mathbb C$ on $X\backslash \Sigma$, whose transition maps are translations. Furthermore, we require that for each point $x\in \Sigma$, there exists some $k\in\mathbb N$ and a homeomorphism of a neighborhood of $x$ to a neighbourhood of the origin in the $2k+2$ half plane construction that is an isometry away from $x$. 
\end{definition}

It is easy to see that the above definition gives a locally Euclidean metric on $X\backslash \Sigma$. The set $\Sigma=\{x_1,\ldots,x_n\}$ is the set of singularities or cone-points on the surface, where the singularity $x_i$ has a cone-angle of the form $2\pi(k(x_i)+1)$ with $k(x_i)\in\mathbb N$.

 In the absence of singularities, the surface is a torus, but  if $X$ has genus at least $2$ then by a simple consideration of the Gauss-Bonnet theorem we see that there must be at least one singularity.
 Henceforth, we will consider the case $\Sigma \neq \emptyset$.

We recall a simple construction for translation surfaces which is particularly useful in giving  examples. 
Let $\mathbb P$ denote a polygon in  the Euclidean plane $\mathbb R^2$ for which every side has an opposite side which is parallel and of the same length.  By identifying these opposite sides we obtain a translation surface.  The vertices may contribute to the singularities and the total angle (i.e., the cone-angle) around any singularity is $2\pi k$, where $k > 1$.

%\begin{figure}
%\centerline{
%\begin{tikzpicture}
%\draw [thick] (0,0) -- (4,0) --(6,2)-- (6,4) --(4, 7)--(0,7)--(-2,5)--(-2,3)--(0,0);
%\node at (2, 4) {$\mathbb P$};
%\end{tikzpicture}
%}
%\caption{Identifying opposite faces in the polygon $\mathbb P$ gives a translation surface}
%\end{figure}

A path which does not pass through singularities in its interior is a locally distance minimizing geodesic if it is a straight line segment. This includes geodesics which start and end at singularities, which are known as {\it saddle connections}. In particular, we will consider  oriented saddle connections.
Let 
$${\mathcal{S}}=\{s_1,s_2, s_3, ...\}$$
 denote the countably infinite family  of oriented saddle connections on the translation surface
ordered by non-decreasing length.   We let $i(s), t(s) \in \Sigma$
denote the initial and terminal singularities, respectively, of the 
oriented saddle connection $s \in \mathcal S$.

A key difference to the Euclidean case is that geodesics 
(i.e., local distance minimizing curves)
can change direction if they go through a singular point. A  pair of line segments ending and beginning, respectively,  at a singular point form a geodesic if the smallest angle between them is at least $\pi$.
In particular, this leaves $2\pi k(x_i)$ worth of angles for the geodesic to emerge from the singularity $x_i$.
This happens because  we are  studying the growth of locally distance minimizing geodesics. 
%In contrast to this, in  the translation surfaces literature, geodesics are often defined in a way which does not permit them to change angles at singularities (mainly because translation surfaces are often studied from a dynamical point of view). 
%We are interested in studying translation surfaces from a geometrical point of view and so our definition of geodesic includes paths which can pass through singularities.

The following particular type of geodesic will play a key role in our later analysis.

\begin{definition}
We can denote by $p = (s_1, \ldots, s_n)$ an (allowed)  finite word of oriented saddle connections corresponding to a geodesic path, which we call saddle connection paths.
We denote by $|p|=n$ the word length
and by 
$$\ell (p) = \sum_{i=1}^n \ell (s_i)$$
 the geometric length.   
 We write $i(p) = i(s_1)$ and $t(p) = t(s_n)$.
\end{definition}
%\marginnotep{I introduced this notation for saddle connection paths (for example using $p\in \mathcal P$ instead of $\underline s: i(\underline s)=x$.)}

%In particular, a necessary condition is that
%$t(s_j) = i(t_{j+1})$ for $1 \leq j \leq n-1$.  Moreover, whether $p$ corresponds to a geodesic path depends on the angle between each $s_{i_j}$ and $s_{i_{j+1}}$ for $j =1, \ldots, n-2$. 

Let $x\in\Sigma$ and define the set $\mathcal P(x,R)$ to be the set of saddle connection paths which start at $x$ and have length less than or equal to $R$. Let $\mathcal P(x)= \cup_{R > 0} \mathcal P(x,R)$ denote the set of all such paths regardless of length.

We conclude this section with two simple 
%representative 
examples of translation surfaces.
% with circles drawn on them.

\begin{example}[Slit surface]
%\marginnotem{I added definition of $i(p)$}
We can consider two copies of the flat torus 
$\mathbb T^2 = \mathbb R^2/\mathbb Z^2$ with a slit removed.  In identifying the two surfaces along the slits one obtains a surface of genus $2$ with two singularities each with cone-angle $4\pi$ (coming from the ends of the slits). See Figure 2. 

\begin{figure}[h]
\centerline{
\includegraphics[width=14cm, height=6cm]{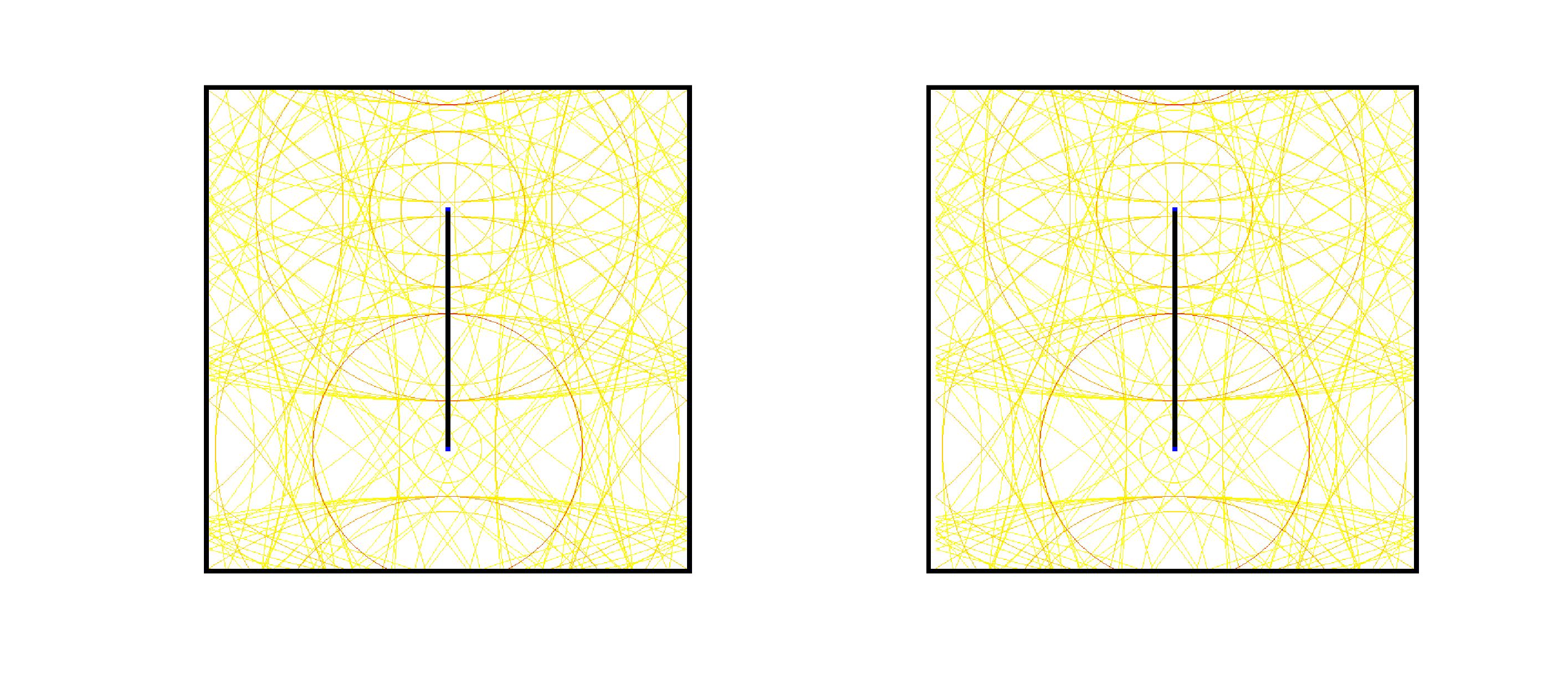}}
\caption{A circle of large radius projected onto a translation surface consisting of  two slit tori}
\end{figure}
\end{example}

\begin{example}[$L$-shaped surface]
	We can consider an $L$-shaped polygon which is a square-tiled surface where the identification of opposite sides gives a surface of genus $2$ and only one singularity (see Figure 3).  The singularity comes from the identification of the corners and has a cone-angle of $6\pi$. 
	
	Because this surface forms a ramified cover of the standard  torus,
	% if this surface has area 3, 
we see that  for each coprime pair $(m,n)\in\mathbb Z^2$, there are three saddle connections whose holonomy correspond to $(m,n)$. Furthermore, we can understand the saddle connection paths on this surface by noting that a saddle connection with holonomy $(m,n)$ can concatenate with any other saddle connection with the same holonomy or two of the three saddle connections with holonomy $(m',n')\neq (m,n)$.  
\begin{figure}[h]
\centerline{
\includegraphics[width=5cm, height=5cm]{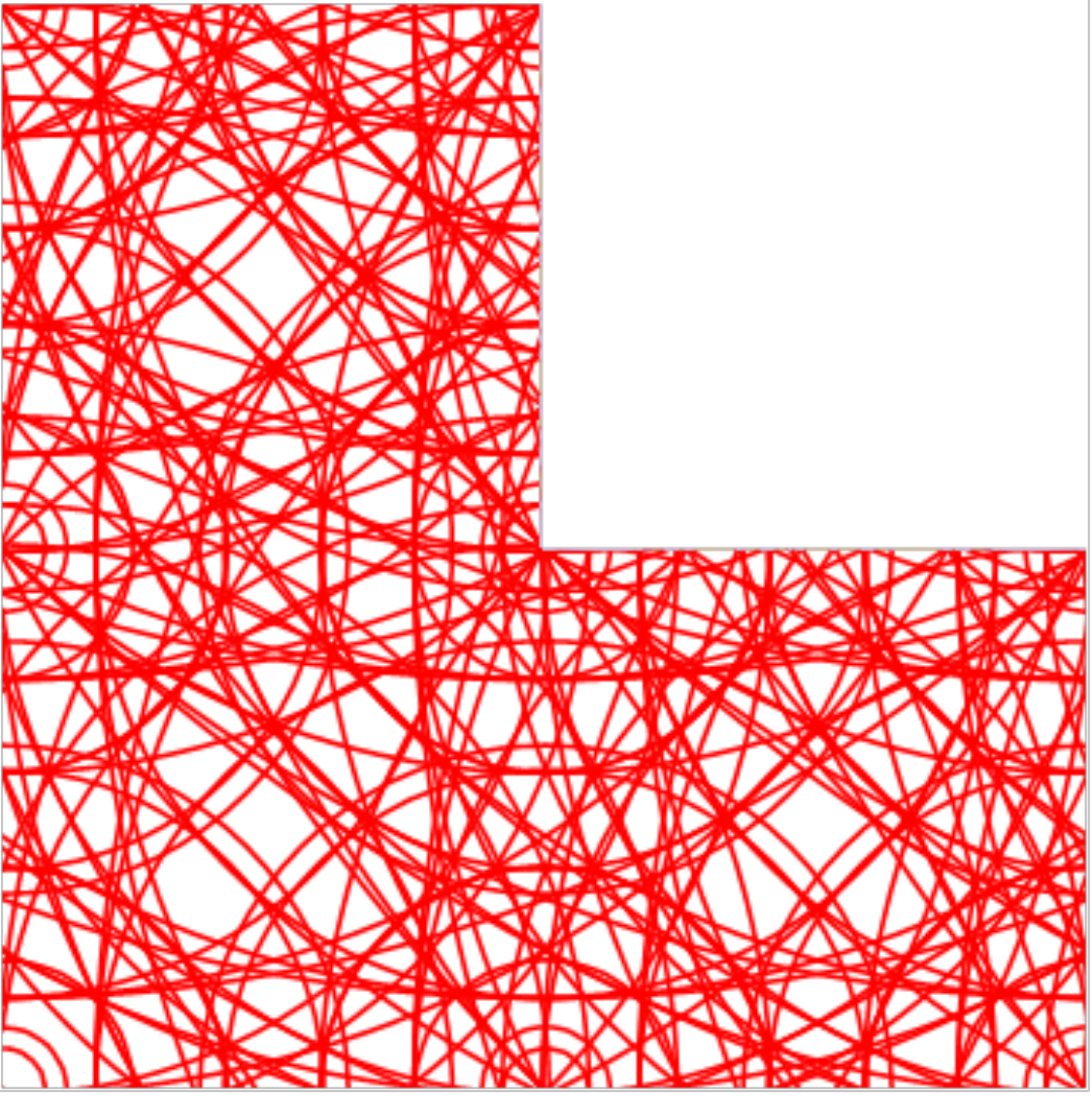}}
\caption{A large circle  projected  onto a  $L$-shaped domain.}
\end{figure}
\end{example}
%marginnotem\marginnotem{How do we fix the figures?}

\section{Proof of Theorem A}
In this section, we provide a  proof of Theorem A. This  proof follows the strategy for the asymptotic for $\hbox{Vol}_X(\mathcal B(\widetilde x,R))$ developed in \cite{cp}. From now on, fix a translation surface $X$, a singularity $x\in \Sigma$, and denote the entropy of $X$ by $h=h(X)$.
 \subsection{Large circles}  
We will first  express   the length $\ell(\mathcal C(x,R))$ of the curve $\mathcal C(x,R)$ using  saddle connection paths. For convenience, we have assumed that $x$ is a singularity; however, a similar argument would work if we considered a general point on the surface instead.
%It will then be convenient to consider certain weighted disks (in the Universal Cover $\widetilde X$) which will be convenient 
%in proving the Theorem B.
 
  The radii for the circles $\mathcal C(x,R)$  correspond to  saddle connection paths $p = (s_1, \ldots, s_n)$ which start at $x$ and are followed by a final  line segment 
  of length $r \geq 0$, say,  which begins at $t(p)$ and contains no other singularities.    In particular, this geodesic will have length 
  $\ell(s_1) + \cdots + \ell(s_n) + r$.
  \begin{lemma}\label{circleslemma}
  	  	Let $x \in\Sigma$. Then for $R>0$
  $$
  \begin{aligned}
  \ell(\mathcal C(x,R))
  &=2 \pi (k(x)+1) R 
 +\sum_{p\in\mathcal P(x,R)} 
  2\pi k(t(p))\left(R - \ell(p)\right).\cr
  \end{aligned}
  \eqno(3.1)
  $$
  
\end{lemma}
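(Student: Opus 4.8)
The plan is to decompose the circle $\mathcal C(x,R)$ according to which saddle connection path a radial geodesic follows before its final straight segment, and then to compute the length contributed by each piece. First I would set up the bookkeeping: every point $y \in \mathcal C(x,R)$ is joined to $x$ by a geodesic which (by the description of geodesics on $X$) is a concatenation of a saddle connection path $p = (s_1,\dots,s_n) \in \mathcal P(x)$ starting at $x$, followed by a final straight segment of length $r = R - \ell(p) \geq 0$ emanating from $t(p)$ and hitting no further singularity. I would think of $\mathcal C(x,R)$ as the disjoint union (up to a measure-zero set of endpoints) of the arcs traced out, for each admissible $p$ with $\ell(p) \le R$, by letting the final segment sweep through all directions in which it can legitimately leave $t(p)$. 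Thus $\ell(\mathcal C(x,R)) = \sum \ell(\text{arc}_p)$, where the sum includes the "empty path" term corresponding to geodesics that leave $x$ directly without hitting another singularity.

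Next I would compute the length of each arc. An arc consisting of the endpoints of straight segments of fixed length $r$ fanning out from a point, over an angular sector of total angle $\theta$, has length exactly $\theta r$ (the segments foliate an annular sector and the outer boundary is a circular arc of radius $r$). So I need the total angle available at each relevant singularity. At the center $x$, which is a cone point of angle $2\pi(k(x)+1)$, a geodesic may leave in \emph{any} direction, so the empty-path arc has angular width $2\pi(k(x)+1)$ and length $2\pi(k(x)+1)R$ — this is the first term of (3.1). When a geodesic arrives at an intermediate singularity $t(p)$ along $s_n$, it may continue as a geodesic only into directions making an angle $\ge \pi$ on either side with the incoming direction; as noted in the text, this leaves an angular range of $2\pi k(t(p))$ for the continuation. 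Hence the arc attached to the path $p$ (with remaining length $r = R - \ell(p)$) has length $2\pi k(t(p)) (R - \ell(p))$, giving the summand in (3.1). Summing over all $p \in \mathcal P(x,R)$ and adding the empty-path term yields the formula.

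The step I expect to be the main obstacle — or at least the one needing the most care — is justifying that this decomposition is genuinely a partition: that almost every point of $\mathcal C(x,R)$ lies on exactly one such arc, with no double counting and no missing pieces. Two issues arise. First, a radial geodesic might be non-unique, or a point might be reachable by geodesics following different saddle connection paths; one must check such ambiguous points form a set of one-dimensional measure zero (they correspond to segments passing through singularities, i.e., finitely many directions per path, hence countably many exceptional points). Second, one must ensure that the angular sectors attached to distinct paths $p$ do not overlap — this follows because the sector for $p$ consists of geodesics whose combinatorics is exactly $p$, and these combinatorial types are mutually exclusive — and that within a single sector the map (direction, length) $\mapsto$ endpoint is injective up to the measure-zero set where a segment would hit a singularity before reaching length $r$ (at which point that sub-ray is correctly reassigned to a longer path $p'$ extending $p$). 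Handling the boundary segments where the final straight piece grazes a singularity, and verifying they contribute nothing to length, is the technical heart; everything else is the elementary "arc length $= \theta r$" computation and a countable sum.
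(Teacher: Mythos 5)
Your proof follows the same route as the paper: decompose radial geodesics by the saddle connection path they traverse before their final straight segment, attach to each path $p$ the angular sector of width $2\pi k(t(p))$ available at $t(p)$, and sum the resulting arc lengths $2\pi k(t(p))(R-\ell(p))$ plus the empty-path term $2\pi(k(x)+1)R$; the paper simply states this as ``immediate.'' The one place you overwork the argument is the worry about the decomposition being a genuine partition of the set $\mathcal C(x,R)\subset X$: the quantity $\ell(\mathcal C(x,R))$ here is the total arc length with multiplicity (equivalently, the length of the circle of radius $R$ in the universal cover, or the pushforward of arc-length from the space of directions at $x$), so two geodesic radii landing on the same point of $X$ are counted twice and no injectivity or measure-zero overlap argument is needed; the decomposition is disjoint at the level of parametrized rays from $x$ by combinatorics, exactly as you observe in your ``sectors do not overlap'' remark, and that is all that is required.
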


\begin{proof}
	The proof is immediate given that locally distance  minimizing geodesics starting from a singularity  are given by a saddle connection path $p = s_1s_2 \ldots s_n$ with $ \ell(s_1) + \cdots + \ell(s_n) \leq R$
	 followed by a straight line starting at the last singularity $t(p)$ of length $R - \ell(s_1) + \cdots + \ell(s_n)$.
	 In particular, this last segment contributes to a Euclidean arc of length $2\pi k(t(p))(R - (\ell(s_1) + \cdots + \ell(s_n)))$.  
	\end{proof}
It is a simple observation (for example from (3.1))
that the length $\ell (C(x, R))$ is a monotone increasing function of $R>0$.
This simple property is necessary for our method of proving the asymptotic formula in Theorem A.

\subsection{Tauberian theorem and complex functions}
The proof of Theorem A involves the application of the following classical  Tauberian theorem to the monotone and continuous function $\ell(\mathcal C(x,R))$.

 \begin{thm}[Ikehara--Wiener Tauberian theorem, cf. \cite{ellison}]\label{tauberian}\label{sec:tauberian}
 %Let $A(R)$ be a non-negative, monotonic, non-decreasing function of $R$ d%efined for $0\leq R <\infty$. 
Let $\rho: \mathbb R^+ \to \mathbb R^+$ be a 
  non-decreasing and right-continuous function. Formally
denote  $\eta(z):=\int_0^\infty e^{-zR}d\rho (R)$, for $z \in \mathbb C$. Assume  that $\eta(z)$ has the following properties:
 
 \begin{enumerate}
     \item there exists some $a>0$ such that $\eta(z)$ is analytic on $Re(z) > a$;
     %We shall refer to $a$ as the abscissa of convergence of $\eta(z)$;
     \item $\eta(z)$ has a meromorphic extension to a neighbourhood of the  half-plane $Re(z)\geq a$;
     \item $a$ is a simple pole for $\eta(z)$, i.e.,  $C=\lim_{\epsilon\searrow 0}(z-a)\eta(z)$ exists and is positive; and 
     \item the extension of $\eta(z)$ has no poles on the line $Re(z)=a$ other than $a$.   
     \end{enumerate}
 Then $\rho(R)\sim \frac{C}{a}e^{aR}$ as $R\rightarrow \infty$.
\end{thm}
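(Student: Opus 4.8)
The plan is to run the classical Wiener--Ikehara argument: convert the analytic information on $\eta$ into a statement about the convolution of $e^{-aR}\rho(R)$ with a Fej\'er kernel, and then remove the smoothing using the monotonicity of $\rho$ (the genuinely Tauberian input). By the substitution $R\mapsto aR$, which merely rescales the residue $C$ to $C/a$, we may assume $a=1$; set $V(R):=e^{-R}\rho(R)$, so that the conclusion becomes $\lim_{R\to\infty}V(R)=C$. Convergence of $\eta(z)$ for $\mathrm{Re}(z)>1$ forces $\rho(R)=O(e^{(1+\delta)R})$ for every $\delta>0$, so an integration by parts is licit and gives $\eta(z)=z\int_0^\infty e^{-zR}\rho(R)\,dR$ (modulo the value $\rho(0)$, which we may take to be $0$). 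Writing $z=1+\sigma$, the function $h(\sigma):=\eta(1+\sigma)/(1+\sigma)-C/\sigma$ therefore equals $\int_0^\infty(V(R)-C)e^{-\sigma R}\,dR$ for $\mathrm{Re}(\sigma)>0$, and hypotheses (2)--(4) translate into the single statement that $h$ is holomorphic on $\{\mathrm{Re}(\sigma)>0\}$ and extends continuously up to the line $\{\mathrm{Re}(\sigma)=0\}$; in particular $t\mapsto h(it)$ is continuous.

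Next I would fix $\lambda>0$ and $y>0$ and, for $\varepsilon>0$, substitute the integral formula for $h(\varepsilon+it)$ into $\int_{-\lambda}^{\lambda}(1-|t|/\lambda)\,h(\varepsilon+it)\,e^{(\varepsilon+it)y}\,dt$ and interchange the two integrations (Fubini is valid once $\varepsilon>0$). Recognising $K_\lambda(u):=\tfrac1{2\pi}\int_{-\lambda}^{\lambda}(1-|t|/\lambda)e^{itu}\,dt$ as the Fej\'er kernel (non-negative, with total mass $1$), this rewrites the expression as
\[ \int_{-\lambda}^{\lambda}\Bigl(1-\tfrac{|t|}{\lambda}\Bigr)h(\varepsilon+it)e^{(\varepsilon+it)y}\,dt \;=\; 2\pi\int_0^\infty \bigl(V(R)-C\bigr)e^{-\varepsilon(R-y)}K_\lambda(y-R)\,dR. \]
Letting $\varepsilon\downarrow0$, the left-hand side tends to $\int_{-\lambda}^{\lambda}(1-|t|/\lambda)h(it)e^{ity}\,dt$ by dominated convergence (using continuity of $h$ up to the line), and this vanishes as $y\to\infty$ by the Riemann--Lebesgue lemma. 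On the right-hand side one splits off the elementary $C$-term, whose limit is $2\pi C$; and since $V\ge0$, Fatou's lemma in the limit $\varepsilon\downarrow0$ yields $\limsup_{y\to\infty}\int_0^\infty V(R)K_\lambda(y-R)\,dR\le C$.

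From this one first deduces that $V$ is \emph{bounded}: for a fixed $\lambda$ and a short interval $[-M,M]$ on which $K_\lambda$ stays positive, the $\limsup$ estimate together with $\int_{y-M}^{y+M}V(R)\,dR\ge(1-e^{-2M})V(y-M)$ (immediate from monotonicity of $\rho$) gives a uniform bound on $V$. With $V$ now known to be bounded, dominated convergence applies directly to the right-hand side in the $\varepsilon\downarrow0$ limit, so the identity above becomes $\int_0^\infty(V(R)-C)K_\lambda(y-R)\,dR=\tfrac1{2\pi}\int_{-\lambda}^{\lambda}(1-|t|/\lambda)h(it)e^{ity}\,dt\to0$; since $\int_0^\infty K_\lambda(y-R)\,dR\to1$, this gives $\int_0^\infty V(R)K_\lambda(y-R)\,dR\to C$ as $y\to\infty$, for every $\lambda$. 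Finally I would let $\lambda\to\infty$, so that $K_\lambda$ becomes an approximate identity; using once more that $\rho$ is non-decreasing to control the oscillation of $V$ (namely $e^{-2\delta}V(y-\delta)\le V(R)\le e^{2\delta}V(y+\delta)$ for $R\in[y-\delta,y+\delta]$), one sandwiches $\int_0^\infty V(R)K_\lambda(y-R)\,dR$ between multiples of $V(y\pm\delta)$ and then sends $\lambda\to\infty$ and $\delta\to0$ to conclude $V(y)\to C$. Undoing the normalisation yields $\rho(R)\sim\frac{C}{a}e^{aR}$.

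The main obstacle I anticipate is exactly the passage $\varepsilon\downarrow0$ inside $\int_0^\infty(V(R)-C)e^{-\varepsilon(R-y)}K_\lambda(y-R)\,dR$ before boundedness of $V$ is available: the analytic hypotheses (1)--(4) do not by themselves control $V$, and one must use positivity of $\rho$ (so that Fatou's lemma gives an inequality in the right direction) together with monotonicity (to bootstrap the one-sided bound into boundedness, and later to remove the smoothing). The remaining ingredients --- the growth bound on $\rho$, the integration by parts, the Fubini interchange, the evaluation of the Fej\'er kernel, and the Riemann--Lebesgue estimate --- are routine; a self-contained treatment is given in \cite{ellison}.
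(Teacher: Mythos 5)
The paper does not actually prove this statement; it quotes it as the classical Ikehara--Wiener theorem, citing Ellison's book, and the accompanying remark only says it is a rescaling of the standard $a=1$ formulation. Your proposal is a correct reconstruction of the classical Wiener--Ikehara argument, and it matches the proof one would find in the cited source, so there is no discrepancy with the paper to report.

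A few small checks worth recording. The reduction to $a=1$ is sound: with $\tilde\rho(R):=\rho(R/a)$ one gets $\tilde\eta(w)=\eta(aw)$, a simple pole at $w=1$ of residue $C/a$, and the $a=1$ conclusion for $\tilde\rho$ unwinds to $\rho(R)\sim (C/a)e^{aR}$. The integration by parts is justified by the growth bound $\rho(T)\le \eta(\sigma)e^{\sigma T}+\rho(0)$ for each $\sigma>1$, which follows from nonnegativity of $d\rho$; this also legitimates the Fubini step for $\varepsilon>0$. Your reading of hypotheses (2)--(4) is if anything a weakening of what is given --- they deliver a holomorphic (not merely continuous) extension of $\eta(z)-C/(z-a)$ to a neighbourhood of the closed half-plane --- so the argument applies a fortiori. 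The bootstrap from the one-sided Fatou bound to boundedness of $V$, and then the sandwich using $e^{-2\delta}V(y-\delta)\le V(R)\le e^{2\delta}V(y+\delta)$ on $[y-\delta,y+\delta]$, are exactly the places where the Tauberian hypothesis (monotonicity of $\rho$) is consumed, and you have identified this correctly as the crux. No gaps.
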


\begin{rem}
Theorem \ref{tauberian} is a reformulation of the standard Tauberian theorem for the case  $a=1$.
%A more  standard  formulation for this tauberian theorem  is based on a corresponding function 
%function $\rho_0(t)$ for which $\int_0^\infty e^{-wt} d\rho_0(t) = \frac{C}{w-1} + \psi_0(w)$ where $\psi_0(w)$ is 
%analytic in a neighbourhood of $Re(w) \geq 1$ and the corresponding asymptotic is $\rho_0(t) \sim Ce^t$ as $t\to +\infty$ cf. \cite{ch}, p. 124.  However, a simple change of variables   recovers the formulation above.
\end{rem}

In order to apply the above Tauberian theorem to $\ell(\mathcal C(x,R))$, we need to define and study the following complex function.

\begin{definition}
Since $\ell (\mathcal C(x, R))$ is monotone increasing we can formally  define 
the Riemann-Stieltjes integral 
$$
\eta(z) = \int_0^\infty e^{-zR} d \ell (\mathcal C(x, R))
\hbox{ for } z \in \mathbb C.  \eqno(3.2)
$$
\end{definition}

%With $f= \chi_S$  we can write 
%$$
%\begin{aligned}
%\eta(s)&= 
%2\pi (k(x)+1)
% \left(
% \int_{d-\epsilon/2}^{d+\epsilon/2} 
% \left( t^2/2 - (d-\epsilon/2)^2/2 \right)
% e^{-st} dR
%  + \int_{d+ \epsilon/2}^\infty e^{-sR} dR \right) \cr
%  &\qquad 
%  + 2\pi  \sum_{p}
%  k(t(p))
%  e^{- s \ell(p)}\int_{0}^\infty e^{-sR} %\left(\int_0^{2\pi} f(t(p)+ v_\theta R)d\theta \right) dR\cr
%&\qquad 
%+ 2\pi  \sum_{p}
%k(t(p))
%e^{- s \ell(p)}\int_{0}^\infty e^{-sR} %\left(\int_0^{2\pi} f(t(p)+ v_\theta R)d\theta \right) dR\cr
%\end{aligned}
%$$
\noindent
Using  (1.1), it is easy to see that  the  complex  function $\eta(z)$ converges to an  analytic function for $Re(z) > h$.

In order to show that $\eta(z)$ satisfies the properties required to apply the Tauberian theorem, we will first rewrite $\eta(z)$ in terms of certain infinite  matrices which contain information about saddle connection paths on $X$ and their lengths. The following three properties of saddle connection paths on translation surfaces guarantee that the matrices have spectral properties which allow us to deduce that $\eta(z)$ has the required properties.
\begin{lemma}\label{lem:transhyp}
For all translation surfaces $X$, with corresponding oriented saddle connection set $\mathcal S$, the following statements hold.
\begin{enumerate}
\item
For all $\sigma > 0$ we have $\sum_{s \in \mathcal S} 
e^{-\sigma \ell(s)} < \infty$; 
\item For any directed saddle connections $s, s' \in \mathcal S$ there exists a saddle connection path $p$ beginning with $s$ and ending with $s'$; and
\item There does not exist a
$d > 0$ such that 
$$
\{ \ell (c) \hbox{ : } c \hbox{ is a closed saddle connection path} \} \subset d \mathbb N.
$$
\end{enumerate}
\end{lemma}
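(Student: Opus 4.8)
The three statements are fairly distinct in character, so I would prove them separately. For part (1), the point is a classical quadratic growth bound on the number of saddle connections of bounded length: there is a constant $c>0$ such that $\#\{s \in \mathcal S : \ell(s) \leq T\} \leq c T^2$ for all $T \geq 1$. This follows from a standard Siegel--Veech type argument — each saddle connection of length at most $T$ contributes a disjoint ``half-disc'' of radius comparable to the systole packed into a fixed compact surface, or alternatively one invokes the known bound from the literature (Masur, Vorobets, or Eskin--Masur). Given such a polynomial bound, $\sum_{s \in \mathcal S} e^{-\sigma \ell(s)}$ converges by the usual dyadic decomposition: group the saddle connections by $\ell(s) \in [n, n+1)$, bound the $n$-th block by $c(n+1)^2 e^{-\sigma n}$, and sum the resulting convergent series.

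For part (2), I would use the fact that $X$ is connected together with the density of saddle connection directions. Given oriented saddle connections $s$ and $s'$, I want a saddle connection path that starts with $s$ and ends with $s'$. First observe that from the terminal singularity $t(s)$ one can always continue: the angle condition (the two segments at a singularity form a geodesic iff the angle between them is at least $\pi$) leaves $2\pi k(t(s))$ worth of outgoing directions, and in any such sector there is at least one saddle connection emanating (again because saddle connections are dense in direction — a consequence of minimality of the straight-line flow in a full-measure set of directions, or simply that a generic geodesic segment from a singularity can be perturbed to hit another singularity). Iterating, one builds a path; connectivity of $X$ and the fact that the singularity set is finite let me steer this path so that after finitely many steps I can arrive at the initial singularity $i(s')$ with an incoming direction that makes angle at least $\pi$ with $s'$, allowing me to append $s'$. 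Some care is needed: rather than an explicit construction, the cleanest route is to note that the ``saddle connection graph'' (vertices $\Sigma$, edges the oriented saddle connections, with the allowed-concatenation adjacency) is strongly connected, which is a known structural fact and is exactly what irreducibility/aperiodicity of the associated transfer operator will require.

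For part (3), I must rule out a common value $d>0$ with all closed saddle connection path lengths lying in $d\mathbb{N}$ — an arithmeticity (lattice) obstruction. The idea is to exhibit two closed saddle connection paths whose lengths are rationally independent, or more simply one closed path and a small deformation of it. Concretely: take any closed saddle connection path $c$ (which exists by part (2) applied with $s=s'$, or by concatenating a saddle connection with itself when allowed); its length is a sum of lengths of saddle connections, each of which is the Euclidean norm $\sqrt{p^2+q^2}$ of a holonomy vector. On the other hand one can find another closed path whose total length is an algebraically independent real number — for instance by using saddle connections in a transverse direction — so the set of closed-path lengths cannot be contained in any single arithmetic progression $d\mathbb{N}$. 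The cleanest general argument: the length spectrum of closed saddle connection paths contains $\{\ell(c) + m\,\ell(c') : m \geq 0\}$ for suitable $c, c'$ sharing a singularity, and if all these lay in $d\mathbb N$ then $\ell(c')/d \in \mathbb N$ and $\ell(c)/d \in \mathbb N$; repeating with a third path $c''$ whose length is not a rational multiple of $\ell(c')$ gives a contradiction. The existence of such incommensurable saddle connections follows because the set of holonomy vectors spans $\mathbb{R}^2$ and their norms cannot all be integer multiples of a fixed $d$.

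The main obstacle I expect is part (3): parts (1) and (2) are essentially citations of well-known structural facts about translation surfaces, but the non-arithmeticity in (3) requires actually producing two closed geodesic saddle connection paths with incommensurable lengths, and one must be careful that the ``concatenation allowed'' angle constraints don't accidentally force all closed paths to use holonomy vectors of a very special arithmetic type (this cannot happen on a genuine translation surface with a singularity, but spelling out why — e.g. via the fact that saddle connection directions are dense and norms $\sqrt{p^2+q^2}$ take irrationally-related values — is the step that needs genuine argument rather than a reference).
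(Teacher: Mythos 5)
The paper itself does not prove this lemma: immediately after the statement it says ``These simple observations are taken from \cite{cp},'' so there is no in-paper argument to compare against. Given that, your plan is a reasonable attempt to supply one, and for parts~(1) and~(2) it is close to what one would write. Part~(1) via Masur's quadratic count $\#\{s:\ell(s)\le T\} = O(T^2)$ and a dyadic summation is exactly right. For part~(2), your ``iterate and steer'' heuristic is plausible, but the clean route --- and the one the authors themselves lean on later, in the proof of Lemma~4.7 --- is Dankwart's theorem that any two oriented saddle connections $s_1,s_2$ can be joined by a single intermediate saddle connection $s$ of bounded length so that $s_1 s s_2$ is an allowed path; quoting that directly avoids the delicate bookkeeping about where intermediate saddle connections can be placed.

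Part~(3) is where your argument has a genuine gap, and you correctly flag it as the hard step. Two issues. First, the arithmetic-progression claim ``the length spectrum contains $\{\ell(c)+m\,\ell(c') : m\ge 0\}$'' is only valid if $c(c')^m$ is an allowed closed geodesic for every $m$; this needs the angle at the splice point of $c'$ with itself to satisfy the both-sides-at-least-$\pi$ condition, and that is not automatic --- in fact, $s\bar s$ (a saddle connection followed by its reverse) is \emph{never} an allowed geodesic, since one of the two angles between the incoming and outgoing directions is $0$. You would need to choose $c'$ so that its last-to-first angle works, which requires a separate argument. Second, your closing step, that ``the set of holonomy vectors spans $\mathbb{R}^2$ and their norms cannot all be integer multiples of a fixed $d$,'' is an assertion rather than a proof for a general translation surface: spanning $\mathbb{R}^2$ by itself does not rule out all norms lying in $d\mathbb{N}$, and the $\sqrt{p^2+q^2}$ reasoning implicitly assumes integer (square-tiled) holonomy. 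To close part~(3) you need an actual mechanism producing two closed saddle connection paths with irrational length ratio on an arbitrary translation surface with $\Sigma\ne\emptyset$; the density of saddle-connection directions in $S^1$ together with the fact that the holonomy set is discrete and has infinitely many elements is the right raw material, but the deduction must be written out, not just cited as a slogan.
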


These simple observations  are taken from  \cite{cp}.

\medskip
We now turn to expressing $\eta(z)$ in terms of infinite matrices in order to obtain the desired meromorphic extension of $\eta(z)$. 

\subsection{Infinite matrices}
We will first show that $\eta(z)$ can be written in terms of saddle connection paths on $X$. We will then rewrite $\eta(z)$ in terms of countably infinite matrices which capture the saddle connection path data of $\mathcal P(x)$.

\begin{lemma}
For $Re(z) > h$,  we have that 
$$
\begin{aligned}
\eta(z) &=
\frac{2\pi}{z}(k(x) + 1) + 
  \frac{2\pi}{z} k(t(p))
\sum_{p \in\mathcal P(x) }
e^{- z \ell (p)}.
%= 
%\frac{2\pi}{z} +  
 %\frac{2\pi}{z} \sum_{n=1}^\infty 
%\sum_{|p|=n }
%e^{- z \ell (p)}.
\end{aligned}
\eqno(3.3)
$$
%\marginnotem{We need to add a contribution from the first circle, with no $p$ involved}
%\marginnotem{The identity has been fixed up.}

\end{lemma}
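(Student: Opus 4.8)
The plan is to expand the Riemann–Stieltjes integral (3.2) using the explicit formula (3.1) from Lemma \ref{circleslemma} and integrate term by term. Writing $\ell(\mathcal C(x,R))$ as a sum of the main term $2\pi(k(x)+1)R$ and the path contributions $\sum_{p\in\mathcal P(x,R)} 2\pi k(t(p))(R-\ell(p))$, one differentiates (in the Stieltjes sense) and integrates against $e^{-zR}$. The first term is routine: $\int_0^\infty e^{-zR}\, d\!\left(2\pi(k(x)+1)R\right) = 2\pi(k(x)+1)\int_0^\infty e^{-zR}\,dR = \frac{2\pi}{z}(k(x)+1)$ for $Re(z)>0$, which is the first summand on the right of (3.3).

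For the path contribution, the key observation is that each $p\in\mathcal P(x)$ enters the function $R\mapsto \ell(\mathcal C(x,R))$ only for $R\geq \ell(p)$, and then contributes the ramp function $2\pi k(t(p))\max\{R-\ell(p),0\}$. So I would first justify interchanging the (countable) sum over $p$ with the integral — this is where positivity of all terms and the hypothesis $Re(z)>h$ are used, via (1.1)/Tonelli, since $\eta(z)$ is known to converge absolutely there. Having exchanged sum and integral, each individual term is
$$
\int_0^\infty e^{-zR}\, d\!\left(2\pi k(t(p))\max\{R-\ell(p),0\}\right)
= 2\pi k(t(p)) \int_{\ell(p)}^\infty e^{-zR}\,dR
= \frac{2\pi}{z} k(t(p))\, e^{-z\ell(p)},
$$
again valid for $Re(z)>0$. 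Summing over $p\in\mathcal P(x)$ gives exactly the second summand of (3.3).

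The one genuine subtlety — and the step I would be most careful about — is the bookkeeping of the measure $d\ell(\mathcal C(x,R))$ at the values $R=\ell(p)$: one must check that the "kinks" in the piecewise-affine-in-each-path description patch together correctly and that there is no double counting of paths of equal length, i.e. that (3.1) really does hold as written with $\mathcal P(x,R)$ the set of \emph{all} saddle connection paths from $x$ of length $\le R$. Since $\mathcal S$ is ordered by non-decreasing length and, by Lemma \ref{lem:transhyp}(1), only finitely many saddle connections (hence, for fixed bounded length, finitely many saddle connection paths) have length below any given bound, the sum $\sum_{p\in\mathcal P(x,R)}$ is finite for each $R$ and $\ell(\mathcal C(x,R))$ is a genuine monotone step-plus-ramp function; this makes the Stieltjes integral well defined and the term-by-term computation legitimate. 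Collecting the two pieces yields (3.3).

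(One small remark on the displayed equation (3.3) as printed: the factor $k(t(p))$ must sit \emph{inside} the sum over $p$, since $t(p)$ depends on $p$; I would read and prove it as $\eta(z)=\frac{2\pi}{z}(k(x)+1)+\frac{2\pi}{z}\sum_{p\in\mathcal P(x)} k(t(p))\, e^{-z\ell(p)}$.)
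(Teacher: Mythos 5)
Your proof is correct and follows essentially the same route as the paper: expand $\eta(z)$ via the decomposition (3.1), compute the first term directly and each path contribution via the shift $T = R - \ell(p)$, and justify interchange of sum and integral by the absolute convergence guaranteed for $\mathrm{Re}(z) > h$. You are also right that the factor $k(t(p))$ in the displayed equation (3.3) must sit inside the sum over $p$, and the paper indeed uses it that way in the subsequent derivation (3.5).
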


\begin{proof}
We begin with the contribution to $\ell (\mathcal C(x,R))$ that comes from the Euclidean circle of radius $R$ and cone-angle 
$2\pi (k(x) + 1)$. 
This involves  considering 
$$
2\pi (k(x) + 1)\int_{0}^\infty e^{-zR} d R  = 
- \frac{2\pi}{z}(k(x) + 1) \left[  e^{-zR} \right]_0^\infty =  \frac{2\pi}{z}(k(x) + 1).
$$
%\marginnotem{Can we express better the introduction of $k(.)$?}
We see from (3.1) that  the  Euclidean circles 
with cone-angle $2\pi k(t(p))$
centered at $t(p)$ 
following  the saddle 
connection path $p = (s_1, \ldots, s_n)$ will have radius  $R - \ell(p)$.  The contribution to $\eta(z)$ from saddle connection path $p$ will be
$$
\begin{aligned}
2\pi k(t(p))\int_{\ell(p)}^\infty e^{-zR} d\left(R-\ell(p) \right) 
& = 
2\pi k(t(p))
e^{-z\ell(p) }\int_{0}^\infty e^{-zR} d R \cr
&= \frac{2\pi  k(t(p))}{z} e^{-z\ell(p)}\cr
\end{aligned}
$$
by the change of variable $T = R-\ell(p)$.    
By (1.1) we have that the summation over all of  these contributions is uniformly convergent for $Re(z) > h$.
This gives the required result.
\end{proof}

We will next rewrite $\eta(z)$ using the following family of infinite matrices $M_z$ ($z\in \mathbb C$).

\begin{definition} 

For $z\in \mathbb C$ with $Re(z)>0$, we define  the infinite matrix $M_z$,  with rows and columns  indexed by $\mathcal S$, by 
%$$K(s, s') = e^{-z(\ell(s) + \ell(s'))/2 }$$ 
$$
M_z(s,s') =
\begin{cases}
 e^{-z \ell(s')} & \hbox{ if  $ss'$ is an allowed  geodesic path,} \cr
 0 & \hbox{ otherwise }
 \end{cases}
$$
where the rows and columns are indexed by saddle connections partially ordered by their lengths.

\end{definition}
\medskip

The saddle connection path length data for $X$ can be retrieved from these matrices in the following way. Let $\mathcal P(n,s,s')$ denote the set of saddle connection paths consisting of $n$ saddle connections, 
starting with $s$ and ending with $s'$, where $s,s'\in\mathcal S$. It then follows from 
formal matrix multiplication that for any $n\geq 1$, the $(s,s')^{th}$ entry of the $n^{th}$ power of the matrix is given by
$$
M_z^n(s,s') = e^{z\ell(s)}\sum_{p\in \mathcal P(n+1,s,s')} e^{-z\ell(p)}.\eqno(3.4)
$$
In order to rewrite $\eta(z)$ in terms of these  matrices, we need to consider the matrices' associated bounded linear operators $\widehat M_z:\ell^\infty \rightarrow \ell^\infty$ which act on the Banach space 
$$\ell^\infty = \left\{\underline u = (u_s)_{s\in \mathcal S} \hbox{ : } u_s \in \mathbb C, 
\quad
\sup_{s\in \mathcal S}|u_s| <\infty\right\},$$ with the norm $\|\underline u\|_{\ell^\infty} = \sup_{n}|u_n| $. The linear operators $\widehat M_z:\ell^\infty\rightarrow \ell^\infty$ are defined
by 
$$(\widehat M_z \underline u )_s = \sum_{s' \in \mathcal S} M_z(s,s') u_{s'} \quad (s \in \mathcal S).$$ Note that $\widehat M_z$ is bounded by Property (1) of Lemma \ref{lem:transhyp}.

Using these operators, the expression for $\eta(z)$ in (3.3) and the expression of saddle connection paths in terms of the matrices (equation (3.4)), for $Re(z)>h$, we can write
$$
\begin{aligned}
\eta(z) 
&= 
\frac{2\pi}{z}(k(x) + 1) + 
  \frac{2\pi}{z} k(t(p))
\sum_{p \in\mathcal P(x) }
e^{- z \ell (p)}\cr
&=\frac{2\pi}{z} (k(x)+1)
+ \frac{2\pi}{z} 
 {\underline v(z)\cdot \left( \sum_{n=0}^\infty \widehat M_z^n\right)}\underline u(z)\cr
 &= \frac{2\pi}{z} (k(x)+1)
+ \frac{2\pi}{z} 
 {\underline v(z) \cdot \left( I - \widehat M_z\right)^{-1}}\underline u(z),
\end{aligned}\eqno(3.5)
$$
where we denote
 $$\underline u(z) 
 =  (k(t(s)))_{s} \in  \ell^\infty
\hbox{  and } \underline v(z) 
 =  
 (\chi_{\mathcal E_x}(s)
 e^{-z\ell (s)})_{s}  \in  \ell^1,$$
  where 
 $\chi_{\mathcal E_x}$ denotes the characteristic function of the set 
 $\mathcal E_x = \{s\in\mathcal S \hbox{ : } i(s) =x\}$
 of oriented saddle connections starting from the singularity $x \in \Sigma$.
 
Next, we make use of an idea developed in \cite{hk} which allows us relate the invertibility of $(I-\widehat M_z)$ to the spectra of a family of finite matrices.
To this end, we note that we can write $M_z$ as 
$$
M_z = 
\left(
\begin{matrix}
A_z&U_z  \\ V_z &B_z
\end{matrix}
\right),
$$
where $A_{z} $ is the $k\times k$ finite sub-matrix of $M_{z}$ corresponding to the first $k\in\mathbb N$, say, oriented  saddle connections and the other sub-matrices $B_{z} ,U_{z} ,V_{z} $ are infinite.

Note that given $\epsilon>0$, $(I-B_z)$ is invertible for $k$ sufficiently large, by Property (1) of Lemma \ref{lem:transhyp}. Hence, for such $k$, the finite matrix $W_z:=A_{z} +U_{z} (I-B_{z} )^{-1}V_{z} $ exists.

By formal matrix multiplication, one can check that whenever $\det(I-W_z)\neq 0$, $I- {M_z}$ is invertible  with inverse
$$(I- {M_z})^{-1}=\left(
\begin{matrix}
I &0  \\ (I -  {B_z})^{-1}{V_z} & (I -  {B_z})^{-1}
\end{matrix}
\right)
\left(
\begin{matrix}
(I- {W_z})^{-1}&(I- {W_z})^{-1} {U_z} (I- {B_z})^{-1}  \\ 0 &I 
\end{matrix}
\right),\eqno(3.6)
$$
on $Re(z)>\epsilon$ for $k$ sufficiently large.\\
Using the  factorization in (3.6), we can deduce the following lemma.

\begin{lemma}
Fix $\epsilon>0$. Then $\eta(z)$ has a meromorphic extension to $Re(z)>\epsilon$ of the form
$$\eta(z)=\frac{\phi(z)}{\det(I-W_z)},$$
where $\phi(z)$ is analytic on $Re(z)>\epsilon$ and
 $k$ chosen to be sufficiently large, where $k$ denotes the size  of the $k\times k$ matrix $W_z$. 
\end{lemma}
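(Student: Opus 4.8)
The plan is to start from the expression for $\eta(z)$ in (3.5), namely
$$
\eta(z) = \frac{2\pi}{z}(k(x)+1) + \frac{2\pi}{z}\,\underline v(z)\cdot(I-\widehat M_z)^{-1}\underline u(z),
$$
valid for $\mathrm{Re}(z)>h$, and extend each ingredient to $\mathrm{Re}(z)>\epsilon$. First I would fix $\epsilon>0$ and, using Property (1) of Lemma \ref{lem:transhyp}, choose $k$ large enough that the infinite sub-block $B_z$ satisfies $\|\widehat B_z\|<1$ uniformly on $\mathrm{Re}(z)\geq\epsilon$, so that $(I-B_z)^{-1}=\sum_{m\geq 0}B_z^m$ converges and is analytic there; this also makes $W_z=A_z+U_z(I-B_z)^{-1}V_z$ a well-defined $k\times k$ matrix depending analytically on $z$ on $\mathrm{Re}(z)>\epsilon$. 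Since $A_z,U_z,V_z$ have entries that are finite sums of terms $e^{-z\ell(s')}$ and $B_z$ contributes a norm-convergent series of such terms, every entry of $W_z$ is analytic on $\mathrm{Re}(z)>\epsilon$, hence so is $\det(I-W_z)$.

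Next I would feed the factorization (3.6) into (3.5). Writing $\underline v(z)=(\underline v_A(z),\underline v_B(z))$ and $\underline u(z)=(\underline u_A(z),\underline u_B(z))$ according to the splitting into the first $k$ coordinates and the rest, multiplying out the two block-triangular factors in (3.6) gives
$$
\underline v(z)\cdot(I-M_z)^{-1}\underline u(z) = \frac{N(z)}{\det(I-W_z)},
$$
where $N(z)$ is a finite combination of the analytic objects $\underline v_A(z)$, $\underline v_B(z)$, $(I-B_z)^{-1}V_z$, $(I-B_z)^{-1}$, $U_z(I-B_z)^{-1}$, $\mathrm{adj}(I-W_z)$ applied to $\underline u_A(z)$ and $\underline u_B(z)$. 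The key points to check are that the pairings converge: $\underline v(z)\in\ell^1$ and $\underline u(z)\in\ell^\infty$, and the intermediate operators are bounded on $\mathrm{Re}(z)>\epsilon$, so each bilinear pairing is an absolutely convergent series that is analytic in $z$ (by locally uniform convergence and Morera). The only denominator that appears is $\det(I-W_z)$, because the factor $(I-W_z)^{-1}=\mathrm{adj}(I-W_z)/\det(I-W_z)$ is the sole source of non-analyticity. Finally I would set
$$
\phi(z) = \frac{2\pi}{z}(k(x)+1)\det(I-W_z) + \frac{2\pi}{z}N(z),
$$
noting that the apparent pole at $z=0$ is outside $\mathrm{Re}(z)>\epsilon$, so $\phi$ is analytic there, and conclude $\eta(z)=\phi(z)/\det(I-W_z)$ by uniqueness of analytic continuation from $\mathrm{Re}(z)>h$.

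The main obstacle I expect is the bookkeeping around convergence of the bilinear pairings rather than anything conceptually deep: one must verify that $\underline v(z)$ really lies in $\ell^1$ on the whole half-plane $\mathrm{Re}(z)>\epsilon$ (which is exactly Property (1) of Lemma \ref{lem:transhyp}, since $\sum_s e^{-\epsilon\ell(s)}<\infty$), that $(I-B_z)^{-1}$ and the operators built from it are bounded on $\ell^\infty$ with norms locally bounded in $z$, and that the resulting series for $N(z)$ converges locally uniformly so that $N$ is genuinely analytic. A secondary point requiring care is confirming that the formal identity (3.6) is a genuine operator identity on $\ell^\infty$ once $(I-B_z)$ is invertible and $\det(I-W_z)\neq0$ — this is the content cited from \cite{hk} — and that on the overlap region $\mathrm{Re}(z)>h$ the new formula agrees with the original definition of $\eta(z)$, which is immediate since both come from the same Neumann series $\sum_n\widehat M_z^n$.
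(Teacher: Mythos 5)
Your proposal is correct and follows exactly the route the paper intends: the paper gives no explicit proof for this lemma, merely asserting it follows from the block factorization (3.6), and your argument is precisely the natural unpacking of that factorization applied to the formula (3.5) for $\eta(z)$. You correctly identify the only nontrivial points — choosing $k$ so that $\|B_z\|<1$ uniformly on $\mathrm{Re}(z)>\epsilon$ via Property (1) of Lemma \ref{lem:transhyp}, checking that the $\ell^1$--$\ell^\infty$ pairings converge and are analytic, and observing that $\det(I-W_z)$ is the sole denominator after replacing $(I-W_z)^{-1}$ by $\mathrm{adj}(I-W_z)/\det(I-W_z)$ — so this is a faithful completion of the paper's sketch rather than an alternative approach.
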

 Note that the poles of this extension occur for $z$ such that 1 is an eigenvalue of the matrix 
  $W_z$. By Property (2) of Lemma \ref{lem:transhyp}, it follows that the $W_z$ are irreducible matrices (see \cite{cp} for details).

 \begin{lemma}\label{thmAextn}
 The meromorphic extension of $\eta(z)$ satisfies the assumptions of the Tauberian theorem. In particular, the meromorphic extension of the $\eta(z)$ is analytic for $Re(z) > h$, with a simple pole at $z=h$ which has positive residue, and there are no other poles on the line $Re(z)=h$.
 \end{lemma}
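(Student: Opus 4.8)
The plan is to read off the four hypotheses of Theorem~\ref{tauberian} from the factorisation $\eta(z)=\phi(z)/\det(I-W_z)$ supplied by the preceding lemma, using Perron--Frobenius theory for the finite irreducible matrices $W_\sigma$ ($\sigma$ real) together with the three structural properties of Lemma~\ref{lem:transhyp}. Hypotheses (1) and (2) are essentially in hand: analyticity of $\eta$ on $\{Re(z)>h\}$ was noted just after (3.2), and the meromorphic extension to $\{Re(z)>\epsilon\}$ is exactly the statement of the preceding lemma. So the real work is to prove (3) that $z=h$ is a simple pole with positive residue, and (4) that there are no other poles on $\{Re(z)=h\}$.

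First I would set up the Perron--Frobenius picture on the real axis. For real $\sigma>\epsilon$ the matrix $W_\sigma=A_\sigma+U_\sigma(I-B_\sigma)^{-1}V_\sigma$ has non-negative entries and, by Property~(2) of Lemma~\ref{lem:transhyp}, is irreducible (as already remarked in the text); let $\lambda(\sigma)$ be its Perron eigenvalue. Since every entry of $W_\sigma$ strictly decreases in $\sigma$, comparison of spectral radii of non-negative irreducible matrices shows $\lambda$ is strictly decreasing, and, being an algebraically simple eigenvalue, it is real-analytic in $\sigma$ with $\lambda(\sigma)\to 0$ as $\sigma\to\infty$. Next I would pin down the value at $h$: the integral (3.2) is a Laplace--Stieltjes transform of the non-decreasing function $R\mapsto\ell(\mathcal C(x,R))$, whose abscissa of convergence is exactly $h$ by (1.1); hence, by the classical theorem of Landau on transforms of non-negative measures, $z=h$ is a genuine singularity of $\eta$, and, $\eta$ being meromorphic on $\{Re(z)>\epsilon\}$, it must be a pole. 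Therefore $\det(I-W_h)=0$. Since $\eta$ is analytic on $\{Re(z)>h\}$ we have $\det(I-W_\sigma)\neq 0$, i.e.\ $\lambda(\sigma)\neq 1$, for all $\sigma>h$; combined with strict monotonicity and $\lambda(\sigma)\to 0$, this forces $\lambda(h)=1$.

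Granting $\lambda(h)=1$, hypothesis (3) is quick. By Perron--Frobenius and irreducibility, $1$ is an algebraically simple eigenvalue of $W_h$ and the only eigenvalue equal to $1$, so on a neighbourhood of $h$ we can write $\det(I-W_z)=(1-\lambda(z))\,g(z)$ with $g$ analytic and $g(h)\neq 0$, where $\lambda$ now denotes the analytic continuation of the Perron eigenvalue. Differentiating $\lambda$ via its positive left/right eigenvectors gives $\lambda'(h)<0$ (the entrywise derivative of $W_\sigma$ is non-positive and non-zero), so $z=h$ is a simple zero of $\det(I-W_z)$; hence $\eta$ has at most a simple pole at $h$, and since we already know it is a pole, it is exactly simple. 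For the residue, $\eta(\sigma)=\int_0^\infty e^{-\sigma R}\,d\ell(\mathcal C(x,R))\ge 0$ for real $\sigma>h$, so $C=\lim_{\sigma\searrow h}(\sigma-h)\eta(\sigma)$ exists (simple pole), is real and $\ge 0$, and is non-zero (genuine pole); thus $C>0$. Finally, analyticity on $\{Re(z)>h\}$ can be re-confirmed from $W_z$ itself: entrywise $|W_z|\le W_{Re(z)}$, so the spectral radius of $W_z$ is at most $\lambda(Re(z))<\lambda(h)=1$, whence $\det(I-W_z)\neq 0$.

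Hypothesis (4) is the main obstacle. Suppose $z_0=h+it_0$ with $t_0\neq 0$ is a pole, so $\det(I-W_{z_0})=0$; as in the verification of (3.6) this produces a non-zero $\underline u\in\ell^\infty$ with $\widehat M_{z_0}\underline u=\underline u$. Taking moduli entrywise and using $|M_{z_0}(s,s')|=e^{-h\ell(s')}=M_h(s,s')$ gives $|\underline u|\le\widehat M_h|\underline u|$; a Perron--Frobenius argument (for which I would lean on the corresponding discussion in \cite{cp}), together with irreducibility, upgrades this to strict positivity of every coordinate of $|\underline u|$ and to equality $|\underline u|=\widehat M_h|\underline u|$. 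The equality case of the triangle inequality then forces the phases of $u_s=|u_s|e^{i\gamma_s}$ to satisfy $\gamma_{s'}-t_0\ell(s')\equiv\gamma_s\pmod{2\pi}$ whenever $ss'$ is an allowed concatenation; summing this congruence around any closed saddle connection path $c$ yields $t_0\ell(c)\in2\pi\mathbb Z$. Thus every closed saddle connection path would have length in $(2\pi/|t_0|)\mathbb N$, contradicting Property~(3) of Lemma~\ref{lem:transhyp}. The delicate points---producing an honest $\ell^\infty$-eigenvector from a pole, running Perron--Frobenius in this infinite-dimensional setting, and justifying the equality case---are precisely where I expect to rely on the machinery already developed in \cite{cp}; the rest is bookkeeping.
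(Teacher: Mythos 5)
Your proposal is correct and follows the same overall strategy as the paper: Perron--Frobenius theory for the finite irreducible matrices $W_\sigma$, analyticity of the Perron eigenvalue $\lambda(\sigma)$ together with $\lambda'(h)<0$ (computed via positive left/right eigenvectors and the negative entrywise derivative of $W_\sigma$) to get a simple zero of $\det(I-W_z)$, and a length-rigidity argument via Property (3) of Lemma~\ref{lem:transhyp} to exclude poles at $h+iy$, $y\neq 0$. Two points where your route differs. First, to pin down that the pole sits exactly at $z=h$ you invoke Landau's theorem on the abscissa of convergence of Laplace--Stieltjes transforms of non-decreasing functions together with monotonicity of $\lambda$; this is a cleaner and more explicit justification than the paper's terse appeal to (1.1), and is a nice addition. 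Second, and this is the substantive divergence, for excluding off-axis poles you pass to the infinite operator $\widehat M_{z_0}$ on $\ell^\infty$, extract an eigenvector, take moduli, and run an equality-case-of-the-triangle-inequality argument; but as you yourself flag, the infinite-dimensional Perron--Frobenius steps (positivity, subinvariance $\Rightarrow$ invariance) are delicate. The paper avoids this entirely by staying at the level of the finite matrices: a pole at $z_0=h+iy$ gives $\det(I-W_{z_0})=0$, and since $|W_{z_0}(i,j)|\le W_h(i,j)$ entrywise with $W_h$ irreducible of spectral radius $1$, Wielandt's theorem for finite non-negative matrices forces $|W_{z_0}|=W_h$ and a diagonal-unitary conjugacy $W_{z_0}=e^{i\theta}DW_hD^{-1}$; reading this off on the Dirichlet-series entries (which sum $e^{-z\ell(p)}$ over saddle connection paths $p$) and on diagonal entries of powers forces $y\,\ell(c)\in 2\pi\mathbb Z$ for all closed saddle connection paths $c$, contradicting Property~(3). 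Since your $\ell^\infty$ eigenvector is built from a finite kernel vector of $I-W_{z_0}$ via $(I-B_{z_0})^{-1}V_{z_0}$, the detour through $\ell^\infty$ gains nothing over applying Wielandt directly to $W_{z_0}$, and it is worth knowing that the finite-dimensional route sidesteps the technical points you were uneasy about.
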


\begin{proof}
The fact that $\eta(z)$ is analytic on $Re(z)>h$ and has a singularity at $z=h$ follows from (1.1).
  
  The pole at $z=h$ corresponds to the matrix $W_h$ having $1$ as an eigenvalue.
  To show the simplicity  of the pole it suffices to show that for $\sigma>0$, the maximal eigenvalue $\lambda(\sigma)$ for $W_\sigma$
satisfies   $\frac{\partial \lambda(\sigma)}{\partial \sigma}|_{\sigma=h}\neq 0$.  However, we can write
 $\frac{\partial \lambda(\sigma)}{\partial \sigma}|_{\sigma=h}
= u. W_{h}'v < 0$ where:   $u, v>0$ are  the normalized left and right eigenvectors, respectively,  of the positive matrix $W_{h}$; and $W_{h}'$ is the matrix with entries 
$W_{h}'(i,j) = \frac{\partial W_{\sigma}}{\partial \sigma}|_{\sigma =h} < 0$
(cf. proof of Lemma 4.3 of \cite{cp}).
  
%The irreducibility of  $W_h $ ensures that the other eigenvalues are bounded away from 1 and hence $z=h$ is a simple pole. The residue is positive because $\eta(\sigma)$ is positive for $\sigma>h$.
 
%Finally, note that each of  the $k^2$ entries in the matrix $A_{h}  +  U_{h} (I-B_{h} )^{-1}V_{h} $ are  Dirichlet series with positive terms. Consider $z=h+iy$ for some $y\in\mathbb R$ and the associated matrix $A_{h+iy}  +  U_{h+iy} (I-B_{h+iy} )^{-1}V_{h+iy} $

It remains to show that there are no other poles on the lines $Re(z)=h$. This follows from comparing the absolute value of the diagonal entries of powers of $W_h$ and $W_{h+iy} $, for $y\in\mathbb R$.  In particular, such entries are Dirichlet series  containing terms of the form $e^{-h\ell(q)}$ and $e^{-(h+iy)\ell(q)}$, respectively, corresponding to closed geodesics $q$. It follows from Wielandt's theorem for matrices, that the only way for $W_{h+iy} $ and $W_{h} $ to both have 1 as an eigenvalue, is if either $y=0$ or for Property (3) of Lemma \ref{lem:transhyp} to not hold.\end{proof}

%By a consideration of the absolute values of the terms in the Dirichlet series corresponding to the entries of the matrix and using the Property (3) of Lemma \ref{lem:transhyp}, we can conclude from the triangle inequality that if the matrix for 
%$z= h+iy$ has eigenvalue $1$, then $y=0$ (See \cite{cp} for a similar argument).

Finally, because $\eta(z)$ satisfies the properties of the Tauberian theorem, the proof of Theorem A follows from the application of the Tauberian theorem to the function $\ell(\mathcal C(x,R))$,  i.e. $\ell(\mathcal C(x,R))\sim \frac{C}{h}e^{hR}$ where $C>0$ is the residue of $\eta(z)$ at $z=h$.

\medskip
 
 We conclude this section by presenting an analogous counting result that we will use later.  
Let  $N(x,R)$  denote   the number of saddle connection paths starting at  $x$ and of length less than or equal to $R$, i.e. $N(x,R)=\#\mathcal P(x,R)$.

 \begin{prop}\label{N(x,R)}
For each  $x \in \Sigma$ there exists $E = E(x)>0$ so that
 $N(x,R)\sim (E/h)e^{hR}$ as $R \to \infty$.
 \end{prop}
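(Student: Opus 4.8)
The plan is to run exactly the argument used for Theorem~A, but applied to the counting function $N(x,R)=\#\mathcal P(x,R)$ in place of the length $\ell(\mathcal C(x,R))$, and with the arc-length weight replaced by a trivial one. First I would note that $N(x,R)$ is non-decreasing and right-continuous in $R$ (it is a step function jumping at the values $\ell(p)$, $p\in\mathcal P(x)$), so it is a legitimate input for the Ikehara--Wiener Tauberian theorem (Theorem~\ref{tauberian}). One then forms the associated Riemann--Stieltjes transform
$$
\eta_N(z):=\int_0^\infty e^{-zR}\,dN(x,R)=\sum_{p\in\mathcal P(x)}e^{-z\ell(p)},
$$
which, by (1.1) together with Property~(1) of Lemma~\ref{lem:transhyp}, converges to an analytic function on $Re(z)>h$ (note there is no $1/z$ factor here since $dN(x,R)$ is a sum of point masses).

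Next, using the matrix identity (3.4) to group the sum over $p\in\mathcal P(x)$ with $|p|=n$ by its first and last saddle connections, one gets $\sum_{|p|=n}e^{-z\ell(p)}=\underline v(z)\cdot\widehat M_z^{\,n-1}\underline 1$, where $\underline v(z)=(\chi_{\mathcal E_x}(s)e^{-z\ell(s)})_s\in\ell^1$ is the same vector as in (3.5) and $\underline 1=(1)_{s\in\mathcal S}\in\ell^\infty$. Summing the geometric series gives, for $Re(z)>h$,
$$
\eta_N(z)=\underline v(z)\cdot(I-\widehat M_z)^{-1}\underline 1 .
$$
This is structurally identical to the second summand of (3.5), with $\underline u(z)=(k(t(s)))_s$ replaced by the constant vector $\underline 1$, so the block factorization (3.6) applies verbatim and yields a meromorphic extension $\eta_N(z)=\phi_N(z)/\det(I-W_z)$ to $Re(z)>\epsilon$ for $k$ sufficiently large, with $\phi_N$ analytic there and with $W_z=A_z+U_z(I-B_z)^{-1}V_z$ the very same finite irreducible matrix appearing in the proof of Theorem~A. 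Consequently the location, simplicity, and isolation of the leading pole are inherited from Lemma~\ref{thmAextn}: the leading pole is at $z=h$ (where $1$ is the maximal eigenvalue of $W_h$), it is simple since $\partial_\sigma\lambda(\sigma)|_{\sigma=h}=u\cdot W_h'v<0$ (a statement about $W_\sigma$ alone, independent of the weight vector), and there are no other poles on $Re(z)=h$ by the same Wielandt-theorem plus Property~(3) argument.

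The one point that genuinely needs checking — and the main, though mild, obstacle — is that the residue $E:=\lim_{z\to h}(z-h)\eta_N(z)$ is strictly positive, i.e.\ that $\phi_N(h)\neq 0$. For this I would use that near $z=h$ the operator $(I-\widehat M_z)^{-1}$ has a simple pole whose residue is a positive rank-one operator, namely (a scalar multiple of) the spectral projection onto the strictly positive Perron eigenvector of $\widehat M_h$, using irreducibility coming from Property~(2) of Lemma~\ref{lem:transhyp}; since $\underline v(h)\ge 0$ is nonzero (it is strictly positive on $\mathcal E_x$) and $\underline 1>0$, the pairing defining the residue is strictly positive. Finally, applying Theorem~\ref{tauberian} to $\rho(R)=N(x,R)$ with $a=h$ and $C=E>0$ yields $N(x,R)\sim(E/h)e^{hR}$ as $R\to\infty$, which is the claim with $E=E(x)$.
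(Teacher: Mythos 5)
Your proposal is correct and follows essentially the same route as the paper, which likewise writes $\eta_N(z)=\sum_{p\in\mathcal P(x)}e^{-z\ell(p)}=\underline v(z)\cdot\left(\sum_{n\geq 0}\widehat M_z^n\right)\underline 1$ and then invokes the spectral properties of the finite matrices $W_z$ together with the Tauberian theorem. The paper states this very tersely (``completely analogous to Theorem~A''); your write-up just fills in the same steps more carefully, including the positivity of the residue.
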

 
\begin{proof}
The proof is  completely analogous to that of Theorem A.
We can write 
 $$
 \begin{aligned}
 \eta_N(z):=\int_0^\infty e^{-zR} dN(x,R)
 &=\sum_{p\in\mathcal P(x)}e^{-z\ell(p)}
 &= \underline v(z)\cdot \left( \sum_{n=0}^\infty \widehat M_z^n \right) \underline 1,
 \end{aligned}
 $$
 where $(\underline 1)_s=1$ for all $s\in\mathcal S$  and $\underline v(z) 
 =  
 (\chi_{\mathcal E_x}(s)
 e^{-z\ell (s)})_{s}  \in  \ell^1$ as before.
 Because 
 of the spectral properties of the finite matrices $W_z$ associated to the $M_z$, $\eta_N(z)$ satisfies the assumptions of
 % the Tauberian theorem,
 Theorem \ref{tauberian}, 
  hence $N(x,R)\sim (E/h)e^{hR}$ 
 where  $E>0$ is the residue of the pole at $z=h$ for $\eta_N(z)$.
\end{proof}

\section{Distribution of large circles}
%Let $E(x,r)$ denote natural the Euclidean disk on $X$ centered at $x$
%of radius $r>0$. 
%The following definition is useful in giving a decomposition of the integral into more managable pieces.
We briefly describe the  strategy for the proof of Theorem B,  which states that the sequence of probability measures
$\mu_R$
 defined by
$$\mu_R(A)=\frac{\ell(\mathcal C(x,R)\cap A)}{\ell(\mathcal C(x,R))},\hbox{ for Borel sets $A\subset X$,}$$ converges in the weak-star topology to some probability measure $\mu$ as $R$ tends to infinity.

The proof of 
%the distribution result in
this theorem comes from  asymptotic results for $\ell(\mathcal C(x,R)\cap B)$, for (small) balls $B\subset X\backslash \Sigma$. In particular, we  will  show that for a (small) ball $B\subset X\backslash \Sigma$, there exists some constant $C(B)> 0$ such that
$$\ell(\mathcal C(x,R)\cap B)\sim C(B)e^{hR} \hbox{ as } R \to \infty.$$
Combining this asymptotic with Theorem A, we can then deduce that for Borel sets $A$ with $\mu(\partial A) = 0$ we have 
$$\lim_{R\rightarrow \infty} \mu_R(A)=\lim_{R\rightarrow \infty} \frac{\ell(\mathcal C(x,R)\cap A)}{\ell(\mathcal C(x,R))}=\lim_{R\rightarrow \infty}\frac{C(A)e^{hR}}{(C/h)e^{hR}}=\frac{C(A)}{(C/h)},$$ which implies $\mu_R$ converges to $\mu:=C(A)/(C/h)$ in the weak-star topology (see \cite{walters}).
Finally, we need  to check that $\mu(A)$ defines a probability measure.

In order to deduce the limit  above  for the functions $\ell(\mathcal C(x,R)\cap B)$, it might seem natural to  consider an appropriate complex function for $\ell(\mathcal C(x,R)\cap B)$ and then apply the Tauberian theorem, as in the proof of Theorem A.
 However, due to the fact that $\ell(\mathcal C(x,R)\cap B)$ may not be monotonic, we cannot directly 
 apply the Tauberian theorem.
 Therefore, we will instead prove an asymptotic result for the non-decreasing continuous function $V_A(R):=\hbox{Vol}_{\widetilde{X}} (\mathcal B(\tilde{x},R)\cap \widetilde A)$, for Borel sets $A\subset X$ i.e.,  the area of the intersection of a ball of radius $R$ in the universal cover of $X$ intersected with the lifts of $A$. We will then be able to use these asymptotics to indirectly deduce the corresponding asymptotics for $\ell(\mathcal C(x,R)\cap B)$ for balls $B\subset X\backslash \Sigma$.
% order  
%to have   a  monotone non-decreasing  function to which to apply the Tauberian theorem. 
%Therefore, rather  than directly considering the integrals of functions over 
%$ C( x,R)$, in this section 
%or its lift 
%$\widetilde C( x,R)$, 
%which is swept out, i.e., the associated ball.
% bounded by $C(x,R)$ in $\widetilde{X}$. 

\subsection{Notation}
%We shall    consider the integral of the lift  $\widetilde f: \widetilde X \to \mathbb R^+$ of  a function 
%$f: X \to \mathbb R^+$
% an 
%associated 
%ball  $ \mathcal B(\widetilde x, R)$  in the universal cover 
% $\widetilde X$  
%centered at 
% $\widetilde x$ (which is a  is lift of $x \in \Sigma$)  and of radius $R>0$.

%We  will first establish  a result involving integrals of positive  functions on disks $ \mathcal B(\widetilde x, R)$.
%(where $\widetilde x \in \widetilde \Sigma$)
 %on the universal cover $\widetilde X$  of the space $X$.
 We begin by introducing  some useful notation.
Let  $\pi : \widetilde X \to X$ denote   the canonical projection from the universal cover   $\widetilde X$
to $X$.  Let $\widetilde \Sigma$ and $\widetilde {\mathcal S}$ 
denote  the lifts of the singularity sets  $\Sigma$ and oriented saddle connections $\mathcal S$, respectively. Fix a lift $\widetilde x\in \widetilde \Sigma$ of $x$. Let $p\in\mathcal P(x)$ be a saddle connection path with lift
$\widetilde {p}$. We denote the length of $\widetilde {p}$ by $\ell(\widetilde p)$ and write $i(\widetilde {p}), t(\widetilde {p})  \in \widetilde \Sigma$ for the singularities at the beginning and end of $\widetilde p$, respectively.

\begin{definition} 
Let  $z \in \Sigma$ with a choice of  lift $\widetilde z \in \widetilde \Sigma$
(i.e., $\pi(\widetilde z) = z$) and let $R >0$.
\begin{enumerate}
%\item
%We define  a ball of radius $R>0$ on $\widetilde X$ by
%$$ \mathcal B(\widetilde x, R):= \cup_{0 \leq r \leq R}\widetilde{\mathcal C}  (\widetilde x,r) \subset \widetilde X,$$
%which is the  union of 
%the lifts of
% circles $\widetilde{\mathcal C}  (\widetilde x,r)$,  centered at $\widetilde x$, for $0 \leq r \leq R$.
\item
We denote   a Euclidean disk 
$\mathcal D(\widetilde z, R)$
 in $\widetilde X$
(with center $\widetilde z$ and radius $R>0$) 	by
$$\mathcal D(\widetilde z, R) \subset \widetilde X$$
consisting of the 
set  of those points $\widetilde y \in \widetilde X$ which are  joined to $\widetilde z$ by a straight line segment 
%$[\widetilde x, \widetilde y]$
of length {\it at most} $R>0$, which does not have a singularity in its interior.
%\footnote{In particular, $\mathcal D(\widetilde x, R) \subset  \mathcal B(\widetilde x, R)$ and  $\mathcal B(\widetilde x, R)$ will be a union of Euclidean disks of smaller radii and with differing singularities as centered}
\item
Let $p\in\mathcal P(x)$ be a saddle connection path with unique lift $\widetilde p$ based at $\widetilde x$. Let $\widetilde w:=t(\widetilde p)\in\widetilde\Sigma$.
We define  a 	 Euclidean sector 
$$\mathcal E({p}, R) \subset 
\mathcal D(\widetilde w, R) \subset 
\widetilde X$$
 associated to $ {p}$ 
 (with center $\widetilde w$ and radius $R>0$) by the
%$$\mathcal E(\widetilde x, \widetilde s, R) \subset 
%\mathcal D(\widetilde x, R) \subset 
%\widetilde X$$
%  to be the Euclidean disk  consisting of the 
set  of points $y \in \widetilde X$ which are  joined to $\widetilde w$ by a straight line segment 
%$[\widetilde x, \widetilde y]$
of length {\it at most} $R>0$,  
which does not have a singularity in its interior, 
 and which additionally  forms a geodesic in $\widetilde X$ with $\widetilde{ p}$.
% \marginnotem{$\mathcal E (p,R)$ is not well defined unless me mention the lift}

\end{enumerate}
\end{definition}  

 On occasion  it will be convenient to consider  sectors on $X$, which we define in an analogous way and denote by $\mathcal E_X(p,R)$.

Given a radius $R>0$, we can write the ball
$ \mathcal B(\widetilde x, R)$ in $\widetilde X$ 
 as 
$$ \mathcal B(\widetilde x, R):= \mathcal D(\widetilde x,R) \cup 
\bigcup_{p\in\mathcal P(x,R)} \mathcal E( {p}, R - \ell( {p}) ).$$
%which is a union of Euclidean sectors
%consisting of the 
%set  of those points $\widetilde y \in \widetilde X$ which are  joined to  $t(\widetilde {p}) \in \widetilde \Sigma$,  for some 
%$\widetilde{p} \in \widetilde {\mathcal S}$ satisfying  $\ell(\widetilde {p}) \leq R$,  with 
%by a straight line segment 
%$[t(\widetilde {p}), \widetilde y]$
%of length {\it at most} $R>0$, which does not have a singularity in its interior.
%Alternatively, we could also write
%$$
% \mathcal B(\widetilde x, R)= 
%\bigcup_{0 \leq r \leq R}\widetilde{\mathcal C}  (\widetilde x,r).$$
%\marginotem{I changed the definition to make it more logical ...?}

Fix a Borel set $A\subset X$. As mentioned at the beginning of this section, we are interested in an asymptotic result for $V_A(R):=\hbox{Vol}_{\widetilde{X}} (\mathcal B(\tilde{x},R)\cap \widetilde A)$. We will proceed with a similar approach to the one we used for Theorem A by making the following observation which can be compared to Lemma 3.1.

 \begin{lemma}
 For $R>0$ we can write, 
% 	Let $x \in \Sigma$  and  $f: X \to  \mathbb R$ be a (non-negative) continuous function with  lifts 
%$\widetilde x \in \widetilde \Sigma $ 
	%(with $\pi(\widetilde x) = x$).
%and 
%	$\widetilde f : \widetilde X \to \mathbb R$.
	%(i.e., a continuous function such that 
	% $\widetilde f\circ \pi = f$)
%		 Then the integral of $\widetilde f$ over the set 
% 	$\mathcal B(\widetilde x, R) \subset \widetilde X$   with respect to the Euclidean volume can be written 
 	$$
 	\begin{aligned}
	V_A(R)
% 	\int_{\mathcal B(\widetilde x,R)} \widetilde f  d\hbox{\rm (Vol)}_{\widetilde X} &= 
%	(k(x)+1)
= \hbox{Vol}_{\widetilde{X}} (\mathcal D(\widetilde x, R)\cap \widetilde{A})+ 
 	\sum_{p\in\mathcal P(x,R)}
 %	 k(t(p)))
 	\hbox{Vol}_{\widetilde{X}} ({\mathcal E}( {{p}}, R - \ell(p))\cap \widetilde A).
 	 \end{aligned}
 		\eqno(4.1)
 	$$
	where the the first term is the volume of the Euclidean disk and the second term is expressed in terms of the volumes of 
	Euclidean sectors.
 	%where the summation is over allowed saddle connection paths  $p = (s_1, \ldots, s_n)$ of length at most $R$ 
%	with corresponding  lift  $\widetilde {p} = (\widetilde s_1, \ldots, \widetilde s_n)$
%	for which
%	\begin{enumerate}
%	\item[(i)]
%	the initial  point  of $p$ (and thus $s_1$) is $i(p) = x$.
%	\item[(ii)]
%	the terminal point of $\widetilde {p}$ (and thus $\widetilde {s_n}$) is 
%	 $t(\widetilde {{s_n}})$.
	 %\end{enumerate}
%	and 
%$\widetilde y = \widetilde {t(p)}$.
 \end{lemma}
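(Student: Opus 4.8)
The plan is to decompose the ball $\mathcal{B}(\widetilde{x}, R)$ into the central Euclidean disk $\mathcal{D}(\widetilde{x}, R)$ together with the Euclidean sectors $\mathcal{E}(p, R - \ell(p))$ over all saddle connection paths $p \in \mathcal{P}(x, R)$, exactly as in the displayed formula for $\mathcal{B}(\widetilde{x}, R)$ stated just before the lemma, and then argue that this decomposition is essentially disjoint, so that taking volumes of intersections with $\widetilde{A}$ is additive. First I would recall that a point $\widetilde{y} \in \widetilde{X} \setminus \widetilde{\Sigma}$ is joined to $\widetilde{x}$ by a unique geodesic, and that this geodesic is either a straight segment from $\widetilde{x}$ (placing $\widetilde{y}$ in $\mathcal{D}(\widetilde{x}, R)$ when it has length $\le R$) or a concatenation of a saddle connection path $p$ emanating from $\widetilde{x}$ followed by a straight segment of length $R - \ell(p) \ge 0$ that forms a geodesic with $p$ and has no interior singularity (placing $\widetilde{y}$ in $\mathcal{E}(p, R - \ell(p))$). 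Since the geodesic to $\widetilde{y}$ is unique, the path $p$ is uniquely determined, so the sets $\mathcal{D}(\widetilde{x}, R)$ and $\{\mathcal{E}(p, R-\ell(p))\}_{p \in \mathcal{P}(x,R)}$ cover $\widetilde{X} \cap \mathcal{B}(\widetilde{x},R)$ with pairwise intersections contained in the measure-zero set of points whose connecting geodesic passes through a singularity (together with the lifted singularity set $\widetilde{\Sigma}$ itself, which is finite and hence null).

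Next I would intersect each piece with the Borel set $\widetilde{A}$: the overlaps remain null, so countable additivity of $\hbox{Vol}_{\widetilde{X}}$ gives
$$
V_A(R) = \hbox{Vol}_{\widetilde{X}}(\mathcal{B}(\widetilde{x},R) \cap \widetilde{A}) = \hbox{Vol}_{\widetilde{X}}(\mathcal{D}(\widetilde{x},R)\cap \widetilde{A}) + \sum_{p \in \mathcal{P}(x,R)} \hbox{Vol}_{\widetilde{X}}(\mathcal{E}(p, R-\ell(p)) \cap \widetilde{A}),
$$
which is precisely (4.1). The sum is over the countable (indeed finite, for fixed $R$, by Proposition \ref{N(x,R)} or directly by Property (1) of Lemma \ref{lem:transhyp}) set $\mathcal{P}(x,R)$, so there is no convergence subtlety. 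This mirrors the structure of Lemma \ref{circleslemma}, where the circle $\mathcal{C}(x,R)$ was similarly split into a central Euclidean arc plus arcs at the terminal singularities of each saddle connection path; here we work with two-dimensional volume in the universal cover rather than one-dimensional arc length on $X$.

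The main obstacle is making precise the claim that the decomposition is disjoint up to a null set, i.e.\ controlling the boundary overlaps. A point can lie on the boundary between two sectors $\mathcal{E}(p, \cdot)$ and $\mathcal{E}(p', \cdot)$ (with $p, p'$ sharing a common prefix and differing at a singularity), or on the boundary between $\mathcal{D}(\widetilde{x}, R)$ and some sector, precisely when the straight segment from the relevant center to $\widetilde{y}$ passes through a singularity of $\widetilde{\Sigma}$. For each pair (center, singularity) this constrains $\widetilde{y}$ to a countable union of rays, hence a set of $\hbox{Vol}_{\widetilde{X}}$-measure zero; since $\widetilde{\Sigma}$ and the relevant centers $t(\widetilde{p})$ form a countable set, the total overlap is null. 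One should also note that the sets $\mathcal{D}(\widetilde{x},R)$ and $\mathcal{E}(p,R-\ell(p))$ are Borel (each is defined by a closed condition on segment length together with a no-interior-singularity condition, and is in fact a finite union of sectors bounded by rays), so all the volumes appearing are well defined. Once these null-set bookkeeping points are in place, (4.1) is immediate.
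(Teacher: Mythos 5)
Your proof is correct and matches the paper's (implicit) reasoning: the authors state the decomposition $\mathcal{B}(\widetilde{x},R) = \mathcal{D}(\widetilde{x},R) \cup \bigcup_{p} \mathcal{E}(p, R-\ell(p))$ immediately before the lemma and then treat (4.1) as an immediate consequence, which is exactly what you spell out by invoking uniqueness of geodesics in the (CAT(0)) universal cover to get essential disjointness and then using additivity of $\hbox{Vol}_{\widetilde{X}}$. The only tiny slip is calling $\widetilde{\Sigma}$ finite --- it is a countable set of lifts of the finite set $\Sigma$ --- but this does not affect the conclusion that it is null.
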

 
 The heuristic of the basic identity (4.1) is illustrated in Figure 4.
 
% \footnote{
 %\textcolor{red}{Perhaps this proof can be simplified as we don't appear to use the approximation argument later. Can it be rewritten in a similar way to Lemma 3.1?}
% I don't know how to avoid the approximation.
 %}

   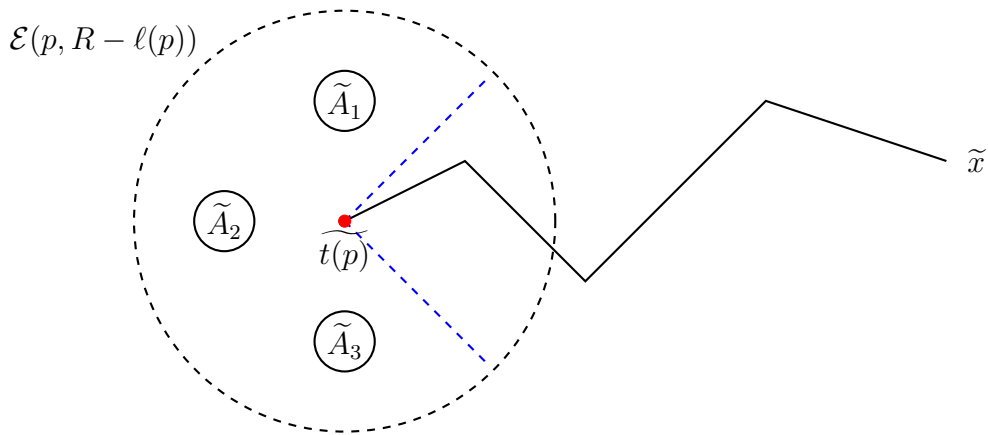
\begin{figure}[h]\label{fig:sector}
\centerline{
\begin{tikzpicture}[scale=0.8]
\draw [thick] (0,0) -- (2,1)--(4,-1)--(7,2)--(10,1);
\draw [thick, dashed,blue] (0,0) --(2.4,2.4);
\draw [thick, dashed,blue] (0,0) --(2.4,-2.4);
\draw [thick, dashed] (0,0) circle [radius=3.5];
\draw [fill, red] (0,0) circle [radius=0.1];
%\draw [thick] (0,0) -- (4,0) --(6,2)-- (6,4) --(4, 7)--(0,7)--(-2,5)--(-2,3)--(0,0);
\node at (10.5, 1) {$\widetilde x$};
\node at (-4, 3) {${{\mathcal E}(   {p},R - \ell(p))}$};
\node at (0, -0.5) {$\widetilde {t(p)}$};
\draw [thick] (0,2) circle [radius=0.5];
\draw [thick] (-2,0) circle [radius=0.5];
\draw [thick] (0,-2) circle [radius=0.5];
\node at (0,2) {$\widetilde A_1$};
\node at (-2,0) {$\widetilde A_2$};
\node at (0,-2) {$\widetilde A_3$};
%\node at (1.4, 0.3) {$ \widetilde  {s_n}$};
%\node at (8.5, 1.0) {$ \widetilde  {s_1}$};
\end{tikzpicture}
}
\caption{The ball  $\mathcal B(\widetilde x, R) \subset \widetilde X$ is a union of appropriate 
Euclidean sectors $\mathcal E(\cdot, \cdot)$ centered at lifts of singularities. We are interested in the volume of the lifts of $A$, represented by $\widetilde A_1$, $\widetilde A_2$ and $\widetilde A_3$, which intersect $\mathcal E(p, R-\ell(p))$.}
\end{figure}

\begin{example}
In the particular case that $A=X$, the identity (4.1) reduces to 
$$	
\begin{aligned}
	V_X(R)&= 
%	(k(x)+1)
(k(x) + 1) \pi R^2
% 	 \cr
 %	&\qquad  
 	+\sum_{p\in\mathcal P(x,R)}
k(t(p))\pi (R - \ell(p))^2.
\end{aligned}
 $$
\end{example}

   \subsection{Asymptotic formula for $V_A(R)$}
 In order to derive an asymptotic formula for $V_A(R)$
   we can now proceed by analogy with the proof of Theorem A. 
   %In particular, we will use Theorem \ref{thm:asymp}.
%There are three complex functions we want to %consider according to which of the t defined as followshree %quantities we want to study  three complex %functions.
To begin, we generalize the definition of  $\eta(z)$  as follows.

\begin{definition}
%Since $ \mathcal W_f(x, R)$ is monotone increasing, 
For Borel sets $A\subset X$ with $\hbox{Vol}_X(A)>0$, we can formally  define a complex function by the Riemann-Stieltjes integral
$$
\eta_A(z) = \int_0^\infty e^{-zR} d 
 V_A(R),
\hbox{ for } z \in \mathbb C.  \eqno(4.2)
$$
\end{definition}

We want to show that the growth rate of $V_A(R)$ is positive. First note that if $\hbox{\rm Vol}_X(A)=0$ then $V_A(R)=0$ for all $R>0$. Before we proceed, we require the following lemma (a similar result can be found in \cite{dankwart}).
%grows exponentially like $h$.

\begin{lemma}\label{non-zero}
Let $A\subset X$ be a Borel set such that $\hbox{\rm Vol}_X(A)>0$. Let 
$\hbox{\rm diam}(X)$ denote the finite  diameter of $X$. Then there exists a saddle connection $s'\in\mathcal S$ such that 
$$\hbox{\rm Vol}_{X}(\mathcal E_X(s',\hbox{\rm diam}(X))\cap  A)>0.$$
\end{lemma}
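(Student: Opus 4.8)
The plan is to reduce the statement to a local fact about the directions of the saddle connections issuing from a single singularity. Since $\hbox{\rm Vol}_X(A)>0$ I would first choose a Lebesgue density point $y_0\in A$ with $y_0\notin\Sigma$; this is possible since $\Sigma$ is finite, hence null. Let $w\in\Sigma$ realise the distance $d(y_0,\Sigma)$ and let $\sigma$ be a shortest geodesic from $w$ to $y_0$, so $\ell(\sigma)=d(w,y_0)=d(y_0,\Sigma)\le\hbox{\rm diam}(X)$, with $\ell(\sigma)>0$ as $y_0\notin\Sigma$. Because $w$ is a nearest singularity to $y_0$, the segment $\sigma$ has no singularity in its interior --- such a singularity would be strictly closer to $y_0$ --- so $\sigma$ is an honest straight segment issuing from $w$ in some direction $\psi$ of the cone of directions at $w$.

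The main step is to find an oriented saddle connection $s'$ with $t(s')=w$ such that $s'$ followed by $\sigma$ is a geodesic; by the definition in \S2 this means the smaller of the two angles $s'$ and $\sigma$ make at $w$ is at least $\pi$, equivalently that the direction at $w$ of the ray issuing from $w$ along $s'$ is at angular distance at least $\pi$ from $\psi$ in the cone at $w$. Such a direction is available provided the saddle connections touching $w$ are not all contained in the open arc of width $2\pi$ centred at $\psi$. To see that they are not, fix a triangulation of $X$ into embedded Euclidean triangles with vertex set $\Sigma$ and edges saddle connections (such triangulations exist; see e.g.\ \cite{wright}, and compare \cite{dankwart}). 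The edges at $w$ occur in cyclic order with consecutive angular gaps strictly less than $\pi$, each gap being an interior angle of a triangle; since the cone angle at $w$ is $2\pi(k(w)+1)\ge 4\pi$, these directions cannot all lie in one arc of width $2\pi$, for otherwise the complementary arc --- of width $2\pi k(w)\ge 2\pi>\pi$ --- would lie inside a single gap. Hence some edge at $w$, oriented to terminate at $w$, serves as $s'$.

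It then follows that $y_0\in\mathcal E_X(s',\ell(\sigma))\subseteq\mathcal E_X(s',\hbox{\rm diam}(X))$. Since $\mathcal E_X(s',\hbox{\rm diam}(X))$ is bounded by two line segments and a circular arc, it has Lebesgue density bounded below by a positive constant at each of its points, in particular at $y_0$; as $y_0$ is a density point of $A$, this gives $\hbox{\rm Vol}_X(\mathcal E_X(s',\hbox{\rm diam}(X))\cap A)>0$.

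I expect the main obstacle to be the local step in the second paragraph: guaranteeing, for every direction $\psi$, a saddle connection at $w$ making angle at least $\pi$ with $\sigma$ on both sides. The triangulation argument handles this, but one must be sure that a saddle-connection triangulation exists with all faces genuine embedded Euclidean triangles (so their angles really are less than $\pi$), or else quote the analogous spreading-out statement from \cite{dankwart}. The only other thing to check is the density estimate in the boundary case, where the sector still has density at least a positive constant at $y_0$.
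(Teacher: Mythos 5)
Your proof is correct and takes a genuinely different route from the paper's. The paper covers $A$ by finitely many sectors: for each $a\in A$ it joins $a$ to a singularity by a straight segment $g_a$ with $\ell(g_a)\le\hbox{\rm diam}(X)$, shows that the sector $\mathcal E_X(g_a,2\,\hbox{\rm diam}(X))$ must contain a singularity (otherwise a singularity-free ball of radius $\hbox{\rm diam}(X)$ could be placed inside it, contradicting the fact that every point lies within $\hbox{\rm diam}(X)$ of $\Sigma$), thereby producing a saddle connection $s_a$ of length at most $2\,\hbox{\rm diam}(X)$ with $a\in\mathcal E_X(s_a,\hbox{\rm diam}(X))$, and concludes by pigeonhole since the $s_a$ range over a finite set. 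You replace both ingredients: the pigeonhole step by a single Lebesgue density point $y_0\in A\setminus\Sigma$, and the singularity-free--ball argument by the existence of a triangulation of $X$ into embedded Euclidean triangles with vertices at $\Sigma$, from which the existence of a suitable saddle connection $s'$ into $w$ follows because the saddle-connection directions at $w$ have pairwise gaps $<\pi$ and therefore cannot all avoid an arc of width $2\pi k(w)\ge 2\pi$. The density-point route is clean in that it sidesteps the finiteness discussion entirely; the cost is the appeal to a saddle-connection triangulation with genuinely Euclidean embedded faces, which is standard (Delaunay triangulations of translation surfaces have this property) but is a heavier imported tool than the paper's self-contained diameter argument and should be cited. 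The two worries you flag are the right ones and neither is a real gap, but on the second you should be slightly more careful than ``two line segments and a circular arc'': the boundary of $\mathcal E_X(s',\hbox{\rm diam}(X))$ may also contain radial slits emanating from singularities lying in the angular range at distance less than $\hbox{\rm diam}(X)$ from $w$. Since $\sigma$ contains no singularity in its interior and $y_0\notin\Sigma$, the point $y_0$ lies at positive distance from every such slit, so the density of the sector at $y_0$ is still bounded below (by $1/4$ in the worst boundary case), and your conclusion stands.
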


\begin{proof}
We require two simple preliminary results.

\bigskip
\noindent
{\it Claim 1.} For any $x\in X$ there is a straight line  segment $g_x$
 joining $x$ to some singularity $y :=t(g_x) \in \Sigma$ of length at most $\hbox{\rm diam} (X)$.

\bigskip
\noindent
 {\it Proof of claim 1.}  A  translation surface is  a geodesic space of finite diameter. In particular, we can connect $x$ to a singularity in $X$ by a  geodesic which necessarily takes the form $p_x=g_xs_1\ldots s_n$ or $p_x=g_x$ of length $\ell(p_x)\leq$ diam$(X)$, where the $s_i$ are oriented saddle connections and $g_x$ is an oriented straight line segment from $x$  to some  some singularity $t(g_x) \in \Sigma$. In either case, $g_x$ is the required straight line segment.   \qed 

%Next we shall prove that for all $a\in A$, there exists an oriented saddle connection $s_a$ of length $\ell(s_a)\leq$ 2 diam$(X)$ such that $a\in \mathcal E_X(s_a,\hbox{diam}(X))$.

%Fix $a\in A$. By the argument above, we can connect $a$ to a singularity $y\in\Sigma$ with an oriented geodesic segment $g_a$ of length $\ell(g_a)\leq$ diam$(X)$.

\bigskip
\noindent
{\it Claim 2.}  Let $a\in A$. By claim 1, there exists an oriented straight line segment $g_a$ connecting $a$ to some singularity $t(g_a)$. The sector $\mathcal E_X(g_a,2\hbox{\rm diam}(X))$ must contain a singularity.
%  \ref{fig:sectorcontradiction}.)

\bigskip
\noindent
 {\it Proof of claim 2.} 
 Assume for  a contradiction that
  $\mathcal E_X(g_a,2\hbox{\rm diam}(X)) \cap \Sigma = \emptyset$.
  Since the angle of the sector is greater than or equal to $2\pi$
  and  by assumption $\mathcal E_X(g_a,2 \hbox{\rm diam}(X))$ is Euclidean,
   one can choose 
   %a point $c\in \mathcal E_X(g_a,2\hbox{diam}(X))$ such that the 
   a ball $\mathcal B(c,\hbox{diam}(X)) \subset \mathcal E_X(g_a,2\hbox{diam}(X))$ centered at 
   $c \in \mathcal E_X(g_a,2\hbox{diam}(X))$ and of diameter 
   $\hbox{diam}(X)$ (see Figure 5). 
   However, by claim 1, there exists a straight line segment $g_c$ of length $\ell(g_c)\leq$ diam$(X)$,  connecting $c$ to some  singularity $z\in \Sigma$. This implies that $z\in \mathcal B(c,\hbox{diam}(X))\subset \mathcal E_X(g_a,2\hbox{diam}(X))$ which gives a contradiction. \qed 
   
  \medskip 

We can now complete the proof of  the lemma.  For any $a \in A$, claim 2 
implies we can choose $z \in \mathcal E_X(g_a,2\hbox{diam}(X)) \cap \Sigma$. Thus we can choose an oriented saddle connection $s_a$ of length $\ell(s_a)\leq 2\hbox{diam}(X)$, from $z$  to $y = t(g_a)$ and such that  $s_ag_a^{-1}$ is an allowed geodesic. Because $\ell(g_a^{-1}) = \ell(g_a)\leq \hbox{diam}(X)$, it follows that $a\in \mathcal E_X(s_a,\hbox{diam(X)})$.

Finally, since  for all $a\in A$ we have $\ell(s_a)\leq 2\hbox{diam}(X)$, the set $\{s_a\}_{a\in A}$ is necessarily  finite. Therefore, because $\hbox{Vol}_X(A)>0$ and $\bigcup_{\{s_a:a\in A\}}\mathcal E_X(s_a,\hbox{diam}(X)\cap A)=A$, at least one of the finite number of sectors, $\mathcal E_X(s_a,\hbox{diam}(X))$, must satisfy $\hbox{Vol}_X(\mathcal E_X(s_a,\hbox{diam}(X)\cap A)>0$.
\end{proof}

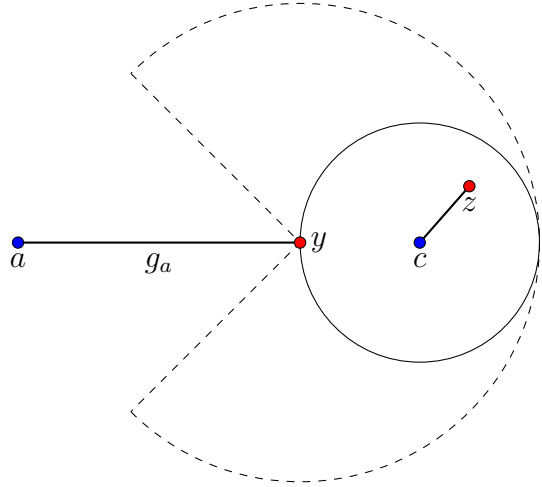
\begin{figure}[h!]
    \centering
\begin{tikzpicture}[scale=0.75]
    \draw[thick] (-1,0)--(4,0);
    \draw[dashed] (1,3)--(4,0)--(1,-3);
    \draw[dashed] (1,-3) arc (-135:135:4.243);
    \draw (4+2.1215,0) circle (2.1215);
    \draw[thick] (4+2.1215,0) -- (7,1);
    \draw[fill=blue] (-1,0) circle (0.1);
        \draw[fill=blue] (4+2.1215,0) circle (0.1);

    \draw[fill=red] (4,0) circle (0.1);
    \draw[fill=red] (7,1) circle (0.1);
    
    \node[below] at (-1,0) {$a$};
   \node[below] at (1.5,0) {$g_a$};
    \node[right] at (4,0) {$y$};
    \node[below] at (4+2.1215,0)  {$c$};
        \node[below] at (7,1)  {$z$};
        %\node[left] at (6.46,0.6)  {$g_c$};

\end{tikzpicture}    \caption{The point $a\in A$ is connected to $y\in\Sigma$ via segment $g_a$. The dotted sector represents $\mathcal E_X(g_a,2\hbox{diam}(X))$. The ball $\mathcal B(c,\hbox{diam}(X))\subset \mathcal E_X(g_a,2\hbox{diam}(X))$  contains a  singularity $z\in\Sigma$. }
    \label{fig:sectorcontradiction}
\end{figure}

%We define $\mathcal P_{s'}(R)$ to be the set of saddle connection paths on $X$ starting at $x$ of length less than or equal to $R$, which end with saddle connection $s'$ as defined in the proof of the above Lemma.

\begin{lemma}\label{h_A(X)>0}
Let $A\subset X$ be a Borel set such that $\hbox{Vol}_X(A)>0$. Then 

$$\lim_{R\rightarrow \infty} \frac{1}{R}\log(V_A(R))=h.$$
\end{lemma}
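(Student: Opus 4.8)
The plan is to sandwich $V_A(R)$ between two quantities that both grow at exponential rate $h$. For the upper bound, the inclusion $\mathcal B(\widetilde x,R)\cap\widetilde A\subset\mathcal B(\widetilde x,R)$ gives $V_A(R)\le V_X(R)=\hbox{Vol}_{\widetilde X}(\mathcal B(\widetilde x,R))$, and the right-hand side grows like $e^{hR}$ by the volume-entropy definition (1.2) (equivalently, this is the $A=X$ case already computed in the Example following (4.1), whose asymptotics follow from Proposition \ref{N(x,R)} and Theorem A by summation by parts over $\mathcal P(x,R)$). Hence $\limsup_{R\to\infty}\frac1R\log V_A(R)\le h$.

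For the lower bound, the key input is Lemma \ref{non-zero}: there is a saddle connection $s'\in\mathcal S$ with $\hbox{Vol}_X(\mathcal E_X(s',\hbox{diam}(X))\cap A)>0$; call this positive number $\kappa$. By Property (2) of Lemma \ref{lem:transhyp}, for every saddle connection path $q\in\mathcal P(x)$ there is an allowed concatenation extending $q$ by a bounded-length bridge to a path ending with $s'$; more usefully, I would run the argument the other way. Consider the family of saddle connection paths $p\in\mathcal P(x)$ that \emph{end with} $s'$. For each such $p$ with $\ell(p)\le R-\hbox{diam}(X)$, the sector $\mathcal E(p,R-\ell(p))$ in $\widetilde X$ contains an isometric copy of $\mathcal E_X(s',\hbox{diam}(X))$ based at $t(\widetilde p)$ (since $R-\ell(p)\ge\hbox{diam}(X)$ and the sector is Euclidean out to that radius), and in particular $\hbox{Vol}_{\widetilde X}(\mathcal E(p,R-\ell(p))\cap\widetilde A)\ge\kappa$. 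These sectors are based at distinct lifts $t(\widetilde p)\in\widetilde\Sigma$ and are essentially disjoint, so from (4.1),
$$
V_A(R)\;\ge\;\kappa\cdot\#\{p\in\mathcal P(x,R-\hbox{diam}(X)):p\text{ ends with }s'\}.
$$
It therefore remains to show this counting function grows at exponential rate $h$. This follows by the same machinery as Proposition \ref{N(x,R)}: the relevant generating function is $\underline v(z)\cdot(I-\widehat M_z)^{-1}\underline e_{s'}$, where $\underline e_{s'}$ is the indicator of the column $s'$; by irreducibility of the matrices $W_z$ (Property (2) of Lemma \ref{lem:transhyp}) this has the same abscissa of convergence $h$ and the same simple pole at $z=h$ with positive residue, so the count is $\sim(E'/h)e^{h(R-\hbox{diam}(X))}$ for some $E'>0$. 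Combining, $\liminf_{R\to\infty}\frac1R\log V_A(R)\ge h$, and together with the upper bound this proves the claim.

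The main obstacle is the lower bound: one must be careful that the sectors $\mathcal E(p,R-\ell(p))$ attached to distinct paths $p$ ending in $s'$ really are disjoint (or overlap in a set of measure zero) inside $\widetilde X$, so that their individual contributions genuinely add up in (4.1); this is where working in the universal cover rather than on $X$ is essential, and it is exactly the reason the proof is routed through $V_A(R)$ rather than through $\ell(\mathcal C(x,R)\cap A)$ directly. The counting estimate for paths ending in a prescribed saddle connection is a cosmetic variant of Proposition \ref{N(x,R)} and should not cause real difficulty.
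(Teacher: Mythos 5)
Your proof is correct, and the overall architecture coincides with the paper's: bound $V_A(R)$ above by $V_X(R)$ and invoke (1.2), then bound below by the constant $\kappa=\hbox{Vol}_X(\mathcal E_X(s',\hbox{diam}(X))\cap A)>0$ from Lemma \ref{non-zero} times a count of saddle connection paths from $x$ ending in $s'$, reading this off the exact decomposition (4.1). The one place you diverge is in establishing the exponential growth rate of that restricted count $N(x,s',\cdot)$. You propose re-running the Tauberian machinery on the generating function $\underline v(z)\cdot(I-\widehat M_z)^{-1}\underline e_{s'}$ and invoking irreducibility of the $W_z$ to get a fresh asymptotic $N(x,s',R)\sim(E'/h)e^{hR}$; this is valid, and as you correctly identify it is a cosmetic variant of Proposition \ref{N(x,R)}. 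The paper instead takes a more elementary shortcut: it cites a bridging lemma of Dankwart (any two oriented saddle connections can be joined by a connecting saddle connection of length $<L$ for a uniform $L$) to deduce $N(x,s',R)\ge N(x,R-(L+\ell(s')))$, reducing directly to the already-proved Proposition \ref{N(x,R)} with no new spectral work. Your route gives a slightly stronger intermediate statement (a full asymptotic rather than a logarithmic rate) at the cost of reproducing the operator-theoretic argument; the paper's route is cheaper and suffices for the logarithmic limit that is all the lemma asserts. Both are sound, and your care about why one must work with sectors in $\widetilde X$ (so that the contributions in (4.1) genuinely add) is exactly the right point to flag.
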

\begin{proof}
We will prove the result by considering upper and lower bounds for $V_A(R)$ and their logarithmic limits. For the upper bound it suffices to use $ V_A(R)  \leq V_X(R)=\hbox{Vol}_{\widetilde X}(\mathcal B(\widetilde x,R))$ and (1.2).
For the lower bound, observe that

$$
\begin{aligned}
V_A(R)
%&= \sum_{p\in \mathcal P(x,R)}	\hbox{Vol}_{\widetilde{X}} ({\mathcal E}( p, R - \ell(p))\cap \widetilde A)\cr
%&\geq  \sum_{p\in \mathcal P_{s'}(R)}	\hbox{Vol}_{\widetilde{X}} ({\mathcal E}( p, R - \ell(p))\cap \widetilde A)\cr
 %	&\geq 	\sum_{p\in\mathcal P_{s'}(R-{diam}(X))} \hbox{Vol}_X(\mathcal E(s',\hbox{diam}(X))\cap A)\cr
 	&\geq   \hbox{Vol}_X(\mathcal E(s',\hbox{diam}(X))\cap A)\cdot N(x,s',R-\hbox{diam}(X)),
 	 \end{aligned}
 	 $$
 	 where $N(x,s',R)$ denotes the number of saddle connection paths starting at $x$ ending with saddle connection $s'$ and of length less than or equal to $R$.
	 %, and 
%	  $\mathcal P_{s'}(R)$ is  the set of saddle connection paths on $X$ starting at $x$ of length less than or equal to $R$, which end with saddle connection $s'$ 
 	 
 	 Next we recall a result from Dankwart \cite{dankwart} which states that any two oriented saddle connections $s_1,s_2$ can be connected by a third oriented saddle connection $s$ of length smaller than a given $L>0$, such that the path $s_1ss_2$ form an allowed  saddle connection path. 
 	 Using this result, we see  that $N(x,s',R)\geq N(x,R-(L+\ell(s'))$ and hence 
 	 $$V_A(R)\geq \hbox{Vol}_X(\mathcal E(s',\hbox{diam}(X))\cap A)\cdot N(x,R-(\hbox{diam}(X)+L+\ell(s')).$$
Finally, we complete the proof by recalling (1.2)  and Proposition \ref{N(x,R)} and then

$$\lim_{R\rightarrow \infty} \frac{1}{R}\log(V_X(R))=\lim_{R\rightarrow \infty} \frac{1}{R}\log(N(x,R)) =h$$
as required.
\end{proof}

We next show that Lemma \ref{h_A(X)>0} can be improved to an asymptotic formula by employing   the method used to prove Theorem A.

\begin{prop}
If $\hbox{\rm Vol}_X(A) > 0$
then there exists $C(A) > 0$ such that 
$$
V_A(R)
 \sim  \frac{C(A)}{h} e^{hR} \hbox{ as } R \to \infty.
$$
%where $C(A)>0$ is the residue of $\eta_A(z)$ at $z=h$.
\end{prop}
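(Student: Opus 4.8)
The plan is to run the same Tauberian machinery used for Theorem A, but now applied to the non-decreasing continuous function $V_A(R)$ in place of $\ell(\mathcal C(x,R))$. Since Lemma \ref{h_A(X)>0} already establishes that $V_A(R)$ has exponential growth rate exactly $h$, the function $\eta_A(z) = \int_0^\infty e^{-zR}\,dV_A(R)$ converges and is analytic on $\mathrm{Re}(z) > h$. The task is to produce a meromorphic extension of $\eta_A(z)$ past the line $\mathrm{Re}(z) = h$ with a simple pole at $z = h$ (positive residue) and no other poles on that line, and then quote Theorem \ref{tauberian}.

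First I would use the basic identity (4.1) to split $\eta_A(z)$ into the disk contribution plus a sum over saddle connection paths $p \in \mathcal P(x)$ of the contributions of the sectors $\mathcal E(p, R-\ell(p))$. For each fixed $p$ the sector volume $\mathrm{Vol}_{\widetilde X}(\mathcal E(p, T) \cap \widetilde A)$ (as a function of $T \geq 0$) is some non-decreasing function $g_p(T)$ starting at $0$; its Riemann–Stieltjes transform $\int_0^\infty e^{-zT} dg_p(T)$ times the phase $e^{-z\ell(p)}$ gives the $p$-th term, exactly as in the proof of Lemma giving (3.3), except that the clean closed form $2\pi k(t(p))/z$ is replaced by a bounded (in $p$) analytic coefficient function $c_p(z)$ depending only on the endpoint data and on how $\widetilde A$ sits near $t(\widetilde p)$. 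The point is that $g_p(T)$ is supported on $T \leq \mathrm{diam}(X)$-type scales only in the sense that its total mass is bounded by $\mathrm{Vol}_X(A)$, so $|c_p(z)|$ is uniformly bounded on $\mathrm{Re}(z) \geq \epsilon$; hence the sum $\sum_p c_p(z) e^{-z\ell(p)}$ can be written, via (3.4), as $\underline v(z)\cdot (I - \widehat M_z)^{-1}\underline u_A(z)$ for an appropriate bounded vector $\underline u_A(z) \in \ell^\infty$ replacing $\underline u(z) = (k(t(s)))_s$. This puts $\eta_A(z)$ in exactly the form $\phi_A(z)/\det(I - W_z)$ with $\phi_A$ analytic on $\mathrm{Re}(z) > \epsilon$, using the block factorization (3.6) verbatim.

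The poles of this extension are again located where $1$ is an eigenvalue of the finite irreducible matrix $W_z$, and this matrix is exactly the same $W_z$ as in the proof of Theorem A — it depends only on $\mathcal S$ and the adjacency of saddle connections, not on $A$. Therefore Lemma \ref{thmAextn} applies without change: the only pole on $\mathrm{Re}(z) = h$ is a simple pole at $z = h$, coming from the maximal eigenvalue of $W_\sigma$ crossing $1$ with nonzero derivative, and the no-other-poles statement follows from the same Wielandt/Property (3) argument. The residue $C(A)$ of $\eta_A$ at $z=h$ is then automatically positive: it is a positive multiple (the derivative factor $-u\cdot W_h' v > 0$) of $\phi_A(h)$, and $\phi_A(h) > 0$ because Lemma \ref{non-zero} guarantees at least one sector $\mathcal E_X(s',\mathrm{diam}(X))$ meets $A$ in positive volume, so $\underline u_A(h)$ has a strictly positive entry while $\underline v(h)$ and the resolvent are positivity-preserving (this is essentially the content of Lemma \ref{h_A(X)>0}, which already rules out $C(A) = 0$). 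Applying Theorem \ref{tauberian} to $V_A(R)$ then yields $V_A(R) \sim (C(A)/h)e^{hR}$.

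The main obstacle I anticipate is the bookkeeping in step two: verifying that the replacement coefficient vector $\underline u_A(z)$ is genuinely in $\ell^\infty$ with the needed analyticity, i.e. that for each saddle connection $s$ the "local sector transform" $c_s(z) = \int_0^\infty e^{-zT} d_T \mathrm{Vol}_{\widetilde X}(\mathcal E(\cdot, T)\cap \widetilde A)$ is analytic and uniformly bounded in $s$ on $\mathrm{Re}(z) \geq \epsilon$. The subtlety is that the sector associated to a long saddle connection path $p$ can wrap around and intersect many different lifts of $A$ (the $\widetilde A_1, \widetilde A_2, \ldots$ of Figure 4), so one must argue that summing these contributions over lifts is controlled — but this is exactly handled by organizing the sum by which saddle connection path reaches each relevant lift, which is what the matrix identity (3.4) encodes. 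Once that is set up carefully the rest is a direct transcription of Section 3.
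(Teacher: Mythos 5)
Your proposal is correct and follows essentially the same route as the paper: split $\eta_A(z)$ via (4.1), write the sector part as $z\,\underline v(z)\cdot(I-\widehat M_z)^{-1}\underline u_A(z)$ with $\underline u_A(z)_s=\int_0^\infty \hbox{Vol}_{\widetilde X}(\mathcal E(p,r)\cap\widetilde A)e^{-zr}dr$, use the same $W_z$ and Lemma \ref{thmAextn} for the pole structure, use Lemma \ref{non-zero} to get $\underline u_A(h)\neq 0$ and hence a positive residue, and invoke the Tauberian theorem. The worry you raise about the sector wrapping around multiple lifts is automatically handled because $\widetilde A$ is the full preimage of $A$, so the sector volume depends only on the terminal saddle connection $s$, making $\underline u_A(z)$ well-defined and uniformly bounded on $\mathrm{Re}(z)\geq\epsilon$ exactly as needed.
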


%an approximation argument replacing $\widetilde f$ by 
%$\chi_S$ and
%given lifts $\widetilde x$ and $\widetilde y$ of $x,y \in \Sigma$
% restricting to those $p$ for which  $i(s_1)=x$   
% and the lift of  $p$ ends at 
%T $\widetilde {t(p)} = \widetilde y$.
%and  applying the  Tauberian theorem.

\begin{proof}
%We return our attention to the complex function $\eta_A(z)$. 
First note that by Lemma \ref{h_A(X)>0}, the assumption that $\hbox{Vol}_X(A)>0$ implies that the complex function $\eta_A(z)$  has a pole at $z=h$ and converges  to an analytic function for $Re(z)>h$.
In particular, for $Re(z) > h$ we can can use (4.1) to write
%In particular, by using integration-by-parts we can write
 	$$
 	\begin{aligned}
	\eta_A(z) &= \int_0^\infty e^{-zR}
	d V_A(R)\cr
&=  z \int_0^\infty \hbox{Vol}_{\widetilde X}(\mathcal D(\widetilde x,R)\cap \widetilde A) e^{-zR} dR
 + z \sum_{p\in\mathcal P(x)}
 %	 k(t(p)))
 \int_{\ell(p)}^\infty 
 \hbox{Vol}_{\widetilde X}(\mathcal E({p},R-\ell(p))\cap \widetilde A)e^{-zR} dR\cr
  &=  z  \int_0^\infty  \hbox{Vol}_{\widetilde X}(\mathcal D(\widetilde{x},R)\cap \widetilde A) e^{-zR} dR 
% 	 \cr
 %	&\qquad  
+ z  \sum_{p\in\mathcal P(x)}
 %	 k(t(p)))
 e^{-z \ell(p))}
 \int_0^\infty
 	 \hbox{Vol}_{\widetilde X}(\mathcal E({p},r)\cap \widetilde A)  e^{-z r} dr     
 	 \end{aligned}
 	$$
using the change of variables $r = R- \ell(p)$ for each of the terms in the final summation.
%except that we now need to accommodate the function $\widetilde f$.  
%\footnote{The trivial case $f$ is identically $1$ would exactly reduce to the case of Lemma 3.6}
By using the matrices $M_z$, we can write $\eta_A(z)$ as 
%on the half-plane $Re(z)>h$, in the form  
$$
\begin{aligned}
\eta_A(z) 
 &=
 % \frac{2\pi}{z}
%  (k(x)+1)
 z  \int_0^\infty  \hbox{Vol}_{\widetilde X}(\mathcal D(\widetilde{x},R)\cap \widetilde A) e^{-zR} dR 
%\frac{2\pi}{z} 
+
 z\underline v(z)\cdot \left(\sum_{n=0}^\infty \widehat M_z^n\right)\underline u_A(z) \cr
 &=
 % \frac{2\pi}{z}
%  (k(x)+1)
 z  \int_0^\infty  \hbox{Vol}_{\widetilde X}(\mathcal D(\widetilde{x},R)\cap \widetilde A) e^{-zR} dR 
%\frac{2\pi}{z} 
+
 z\underline v(z) \cdot\left(I  - \widehat M_z\right)^{-1}\underline u_A(z) \cr
\end{aligned}
$$
where
\begin{enumerate}

\item[a)]
 $\underline u_A(z) 
 = 
 % \left(k(t(s))
 \left(
 \int_0^\infty\hbox{Vol}_{\widetilde X}(\mathcal E({p},r)\cap \widetilde A)  e^{-z r}
dr\right)_{s \in \mathcal S} \in \ell^\infty$; and 
 \item[b)] $\underline v(z) 
 =  
 (\chi_{\mathcal E_x}(s)
 e^{-z\ell (s)})_{s\in \mathcal S} \in \ell^1$, where 
 $\chi_{\mathcal E_x}$ denotes the characteristic function of the set 
 $\mathcal E_x = \{s \in\mathcal S\hbox{ : } i(s) =x\}$.
 \end{enumerate}
The quadratic growth of the volume function 
$R \mapsto \hbox{Vol}_{\widetilde X}(\mathcal D(\widetilde{x},R)\cap \widetilde A)$
gives that the term
 $$ z  \int_0^\infty  \hbox{Vol}_{\widetilde X}(\mathcal D(\widetilde{x},R)\cap \widetilde A) e^{-zR} dR $$ 
 is analytic for $Re(z) > 0$.
Moreover, the   sequences $\underline v(z)$ and $\underline u_A(z)$
  are analytic on $Re(z)>0$. Furthermore, by Lemma \ref{non-zero}, $\underline u_A(h)$ is non-zero.
% Thus the meromorphic extension of $\eta_A(s)$ 
 %and the location of its  poles come from the spectral properties of $\widehat M_z$, as in the proof of Lemma 3.6.  
It follows from the proof of Theorem A (or \cite{cp}), that the  complex function $\eta_A(z)$ has the following properties:
\begin{enumerate} 
\item $\eta_A(z)$ converges to a non-zero analytic function for $Re(z) > h$;
\item 
 $\eta_A(z)$ extends to a simple pole at  $z = h$
with residue $C(A)>0$;  and 
 \item $\eta_A(z)$ has an analytic extension 
 to a neighbourhood of 
 $$\{z \in \mathbb C \hbox{ : }Re(z) > h\} - \{h\}.$$
\end{enumerate}

  \begin{comment}
 $$
 z \mapsto \int_0^\infty \hbox{Vol}_{\widetilde x}(\mathcal D (\widetilde x ,R)\cap \widetilde A)  e^{-z R}
dR,  \int_0^\infty\hbox{Vol}_{\widetilde X}(\mathcal E({p},r)\cap \widetilde A) e^{-z r}
dr \in \mathbb C
$$
are analytic and bounded  on $Re(z) > h$.    Thus the meromorphic extension of $\eta_A(s)$ and the location of its  poles come from the spectral properties of $\widehat M_z$, as in the proof of Lemma 3.6.

%This complex function has very useful properties which we summarize in the next lemma, which is the analogue of 
%Lemma \ref{extension-first}.

\begin{lemma}\label{extension2}
For Borel sets $A\subset X$ with $\hbox{Vol}_X(A)>0$, the complex function $\eta_A(z)$ has the following properties.
\begin{enumerate} 
item $\eta_A(z)$ converges to a non-zero analytic function for $Re(s) > h$;
\item there exists $C_1, C_2 > 0$
such that the residue $v(S)$ satisfies 
$C_1 \lambda(S) \leq v(S) \leq C_2 \lambda(S)$.
\item 
 $\eta_A(z)$ extends to a simple pole at  $z = h$
with residue $C(A)>0$, say;  and 
 \item $\eta_A(z)$ has an analytic extension 
 to a neighbourhood of 
 $$\{z \in \mathbb C \hbox{ : }Re(z) > h\} - \{h\}.$$
\end{enumerate}
\end{lemma}

%\section{Matrices and Saddle Connection paths}

%\noindent

%\footnote{\textcolor{red}{Perhaps we can move this into Lemma 4.5}
%It could just be a comment after the proof.}
The proof of the lemma  follows the same lines  as that of Lemma 3.6. 

%In particular, we need the analogue of (3.4) 
%(which is derived using (4.1) in place of (3.1))
%which takes the following form.
%For $Re(z) > 0$ we can write  that 
%$$
%\begin{aligned}
%\eta(z) 
% &= \frac{(k(x)+1)}{z} \widehat C_f(\widetilde x, z) 
%+ 
 %{\underline v(z) \left( I - M_z\right)^{-1}}\underline u(s),
%\end{aligned}\eqno(4.7)
%$$
%where:
%$\widehat C_f(\widetilde x, z)  = \int_0^{2\pi} \left(\int_0^\infty e^{-zt} \widetilde f(\theta, t)\right) dt d\theta$
%(using $\widetilde f(\theta, t)\right)$ to denote $\widetilde f (\cdot)$ in the natural radial coordinates about $\widetilde x$);
 %$$\underline u(z) 
 %=  (k(t(i)))_{s}; 
%\hbox{  and }
 %\underline v(z) 
 %=  
 %\left(\chi_{\mathcal E_x}(s)
 %\widehat C_f(t(s),z)
 %\right)_{s}
 %$$
% where 
 %$ \chi_{\mathcal E_x}$ again 
  %denotes the characteristic function of the set 
 %$\mathcal E_x = \{s \hbox{ : } i(s) =x\}$
 %of saddle connections starting from the singularity $x \in \Sigma$.
 %\footnote{
 %\textcolor{red}{Can we sweep this bit under the rug by referencing \cite{cp} the fact that we essentially do the same thing in section 3?} Sure - if we explain the changes}
%We can see  by the  
 %meromorphic extension of $\left( I - M_z\right)^{-1}$ to $Re(z) > 0$ in  the previous lemma that the expression on the Right Hand Side of the identity has a meromorphic extension to this half plane too.
 %But then (4.5) shows that $\eta(z)$ has a meromorphic extension 
 %to $Re(z) > 0$.
% for $\mathcal F_x = \{ e \in \mathcal E \hbox{ : %}  t(i) = y\}$
%This leads to the following, which proves part 1 and 3 of Lemma 4.3.
%\medskip
%\begin{lemma}\label{extension}
% The function $\eta(z)$ is analytic for $Re(z) > h$ and has a meromorphic extension to $Re(z) > 0$.  Moreover $\eta_X(z)$ has a simple pole at $z=h$ and no other 
% poles on $Re(z) = h$.
%\end{lemma}
%If the maximal eigenvalue of $M_s$ is denoted by $\lambda(s)$ 
%and thus the residue of $\eta(z)$ as $s=h$ 

\end{comment}

Finally,  we can apply Theorem \ref{tauberian} to the monotone continuous function $V_A(R)$
  to deduce the asymptotic formula
$$
V_A(R)
 \sim  \frac{C(A)}{h}e^{hR} \hbox{ as } R \to \infty,\eqno(4.4)
$$
where $C(A)>0$ is the residue of $\eta_A(z)$ at $z=h$.
This completes the proof of the proposition.
\end{proof}

%where $C(f) = \sum_{x\in \Sigma} \widehat C_f(\widetilde x,f)$.??????????????????  {\color{green} How does it depend on the different saddle connectcions?}
%\footnote{
%\textcolor{red}{Perhaps we could elaborate on this remark, explicitly showing what the residue is, to make it more obvious why $C(f)$ is linear im $f$?}
%Sure - you need to check the formula for $\widetilde f$ above,
%}
%\begin{rem}

%\begin{example}\label{ex:constant} We can return to the example where $A=X$, in which case we can write 
% 	$$
% 	\begin{aligned}
%	\eta_X(z) &=  z (k(x)+1) \int_0^\infty \pi R^2 e^{-zR} dR 
% 	 \cr
 %	&\qquad  
%+ z \sum_{p\in\mathcal P}
% 	 k(t({p})))
% e^{-z \ell(p))}
 %\int_0^\infty e^{-z r}
% 	 \pi r^2 e^{-zr} dr\cr
%	 &=  
%	\frac{2\pi }{z^2}   \left( (k(x)+1) 
%	+ 
% 	 \cr
 %	&\qquad  
%\sum_{p\in\mathcal P}
% 	 k(t({p})))
% e^{-z \ell(p))}
%\right)
% 	 \end{aligned}
% 	$$
%	since the Laplace transform $\int_0^\infty u^2 e^{-uz} du = \frac{2}{z^3}$. 
%\end{example}

Recall that $V_X(R)\sim (C(X)/h)e^{hR}$ and $\ell(\mathcal C(x,R))\sim (C/h)e^{hR}$ for some constants $C(X),C>0$. 
We conclude this section with the following relation between $C$ and $C(X)$ which will be used in the proof of Theorem B.

\begin{lemma}\label{lem:coeff}
$C/h=C(X).$
\end{lemma}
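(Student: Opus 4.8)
The plan is to relate the two asymptotic constants by comparing the exponential growth rates of the length of circles $\ell(\mathcal C(x,R))$ and the volume of balls $V_X(R) = \hbox{Vol}_{\widetilde X}(\mathcal B(\widetilde x,R))$ through their defining expansions over saddle connection paths, namely Lemma \ref{circleslemma} and the example identity for $V_X(R)$. Recall that
$$
\ell(\mathcal C(x,R)) = 2\pi(k(x)+1)R + \sum_{p\in\mathcal P(x,R)} 2\pi k(t(p))\bigl(R-\ell(p)\bigr)
$$
while
$$
V_X(R) = (k(x)+1)\pi R^2 + \sum_{p\in\mathcal P(x,R)} k(t(p))\pi\bigl(R-\ell(p)\bigr)^2.
$$
The key observation is that $\frac{d}{dR}\bigl[\pi(R-\ell(p))^2\bigr] = 2\pi(R-\ell(p))$ and $\frac{d}{dR}\bigl[(k(x)+1)\pi R^2\bigr] = 2\pi(k(x)+1)R$, so that formally $\frac{d}{dR}V_X(R) = \ell(\mathcal C(x,R))$. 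This is geometrically natural: the length of the sphere of radius $R$ is the derivative of the volume of the ball of radius $R$.

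First I would make this derivative relation rigorous. Since $V_X(R)$ is a sum (over $p \in \mathcal P(x,R)$, a set that grows with $R$) of smooth functions, one must be slightly careful at the radii $R = \ell(p)$ where new terms enter the sum; but each new term enters continuously with value $0$ and derivative $0$ at that moment, so $V_X(R)$ is in fact $C^1$ with $V_X'(R) = \ell(\mathcal C(x,R))$ for all $R>0$ (the series and its termwise derivative both converge locally uniformly by the exponential growth bound (1.1)/(1.2)). Alternatively, and perhaps cleaner, one can integrate directly: $V_X(R) = \int_0^R \ell(\mathcal C(x,t))\,dt$, which follows by integrating the identity in Lemma \ref{circleslemma} term by term and using $\mathcal P(x,t) \subset \mathcal P(x,R)$ together with $\int_{\ell(p)}^R 2\pi k(t(p))(t-\ell(p))\,dt = \pi k(t(p))(R-\ell(p))^2$.

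Then I would conclude by combining this with the two asymptotics. From Theorem A we have $\ell(\mathcal C(x,t)) \sim (C/h)e^{ht}$, hence
$$
V_X(R) = \int_0^R \ell(\mathcal C(x,t))\,dt \sim \frac{C}{h}\int_0^R e^{ht}\,dt = \frac{C}{h^2}e^{hR} + O(1) \sim \frac{C}{h^2}e^{hR}
$$
as $R\to\infty$, where the first asymptotic equivalence of the integrals is justified because $\ell(\mathcal C(x,t))/\bigl((C/h)e^{ht}\bigr) \to 1$ and $\int_0^R e^{ht}dt \to \infty$ (a standard fact: if $f(t)\sim g(t) > 0$ with $\int g \to \infty$, then $\int_0^R f \sim \int_0^R g$). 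On the other hand, by the Example computation and the Proposition we have $V_X(R) \sim (C(X)/h)e^{hR}$. Comparing the two expressions for $V_X(R)$ forces $C(X)/h = C/h^2$, i.e. $C(X) = C/h$, which is the claim.

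The main obstacle I expect is the justification of the interchange of summation with integration (or differentiation) at the threshold radii $R = \ell(p)$ where the index set $\mathcal P(x,R)$ jumps; this is where one needs to invoke the locally uniform convergence coming from the exponential bound in (1.1), and to note that each term $2\pi k(t(p))(R-\ell(p))_+$ and its antiderivative $\pi k(t(p))(R-\ell(p))_+^2$ are continuous (indeed $C^1$) across the threshold, so no boundary terms appear. Everything else is an elementary asymptotic manipulation.
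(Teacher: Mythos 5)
Your proof is correct, and it follows a genuinely different route from the paper. The paper proves Lemma 4.8 purely algebraically, by noting that $\eta_X(z)$ and $\eta(z)$ differ by a factor of $1/z$ in front of the common term $\underline v(z)\cdot(I-\widehat M_z)^{-1}\underline u(z)$ (because the Laplace transform of $r^2$ carries an extra $1/z$ relative to that of $r$), so that taking residues at $z=h$ immediately gives $C(X)=C/h$. You instead isolate the underlying geometric identity $V_X(R)=\int_0^R \ell(\mathcal C(x,t))\,dt$ --- the derivative of the area of the ball is the length of the circle --- and then transfer the asymptotic $\ell(\mathcal C(x,t))\sim (C/h)e^{ht}$ through the integral via the standard fact about integrating asymptotic equivalences of nonnegative functions; comparing with $V_X(R)\sim (C(X)/h)e^{hR}$ then forces $C(X)=C/h$. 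Your approach has the virtue of not needing to re-enter the matrix/transfer-operator machinery at all, and it makes the geometric content of the coefficient relation explicit; the paper's approach is quicker once the residue expressions are already on the table. One small simplification you could make: the worry about interchanging the sum and the integral at the thresholds $R=\ell(p)$ is actually moot, since $\mathcal P(x,R)$ is a \emph{finite} set for each fixed $R$ (the lengths of saddle connections are bounded below away from zero and form a discrete set), so the sum on $[0,R]$ involves only finitely many continuous, piecewise-$C^1$ terms and the termwise integration needs no uniform-convergence argument.
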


\begin{proof}
The coefficients $C$ and $C(X)$ are obtained as the residues of $z=h$ for $\eta_A(z)$ and $\eta_X(z)$, respectively. In particular, using Example 4.3
%and Example \ref{ex:constant}  
we see that
 $$
 \begin{aligned}
 C(X)&=\lim_{z\rightarrow h} (z-h) \frac{2\pi}{z^2} 
 {\underline v(z) \cdot \left( I - \widehat M_z\right)^{-1}}\underline u(z)\cr
 &=\frac{1}{h}\lim_{z\rightarrow h} (z-h) \frac{2\pi}{z} 
 {\underline v(z) \cdot \left( I - \widehat M_z\right)^{-1}}\underline u(z)\cr
 &=\frac{C}{h}.
 \end{aligned}
 $$
\end{proof}

%\begin{rem}
%It would not be too difficult to extend this analysis to the unit tangent bundle of $X- \Sigma$, which would ultimately give a slightly stronger distribution result.
%\end{rem}

\section{Proof of Theorem B}
We now have all the ingredients to complete  the proof of Theorem B. Recall that in the previous section we showed that if $\hbox{Vol}_X(A)>0$ then  $V_A(R)\sim (C(A)/h)e^{hR}$ for some $C(A)>0$ and if $\hbox{Vol}_X(A)=0$ then for all $R>0$, $V_A(R)=0$ (and so we can formally write $V_A(R)\sim 0e^{hR}$). We use this to define the measure $\mu$ as follows: for all Borel sets $A\subset X$, we define
$$\mu(A)=\begin{cases} \frac{C(A)}{C/h}=\frac{C(A)}{C(X)}&\text{ if $\hbox{Vol}_X(A)>0$}\\
0&\text{ if $\hbox{Vol}_X(A)=0$,}

\end{cases}$$

where the equality comes from Lemma 4.8.

By using a similar expression for the residues used in the proof of Lemma \ref{lem:coeff}, one can check that $\mu$ defines a probability measure on $X$.

Furthermore, it is easy to see that $\mu$ is absolutely continuous with respect to the volume measure on $X$, $\hbox{Vol}_X$, (i.e. $\mu(A)=0$ if and only if $\hbox{Vol}_X(A)=0$ for all Borel sets $A$). In particular, if $\hbox{Vol}_X(A)=0$ then $V_A(R)=0$ for all $R>0$ and $\mu(A)=0$. For the case where $\mu(A)=0$, we can consider the contrapositive statement and observe that if $\hbox{Vol}_X(A)>0$ then we have shown that $\mu(A)=C(A)/C(X)>0$.

It remains to show that $\mu_R\rightarrow \mu$ in the weak-star topology.  To this end it suffices to show that 
$\mu_R(B)$ converges to $\mu(B)$ for appropriately  small balls  $B \subset  X \backslash \Sigma$ (see \cite{walters}).

The proof of  Theorem B now comes in two steps.  The first step is to deduce  an asymptotic  result for annuli.  The second 
step is to let the thickness of the annuli tend to zero. 

To achieve the first step,  given $\epsilon > 0$ we  denote by 
$$
\mathcal A(\widetilde {x}, R-\epsilon, R) := \mathcal B(\widetilde {x}, R) - \mathcal B(\widetilde {x}, R-\epsilon),
\quad\hbox{ for } \widetilde x \in \widetilde \Sigma \hbox{ and } R>0,
$$
the corresponding annulus.
We can then use (4.4) twice 
%and Lemma \ref{extension2}
to deduce an asymptotic expression for $\hbox{Vol}_{\widetilde X}(\mathcal A(\widetilde x,R-\epsilon,R)\cap \widetilde B)$ of the form
$$
\hbox{Vol}_{\widetilde X}(\mathcal A(\widetilde x,R-\epsilon,R)\cap \widetilde B)
	\sim \frac{C(B)}{h}e^{hR}\left(1 - e^{-h\epsilon} \right) \hbox{ as } R \to \infty,
\eqno(5.1)$$
where $ \widetilde B$ is the lift of $B$.
%where $C(A)=0$ if $\hbox{Vol}_X(A)=0$.

For the second step, we require an approximation argument.  Let $B\subset X\backslash \Sigma$ have center $c\in X\backslash \Sigma$ and radius $t>0$. Let $d = \|B - \Sigma\|$ denote the Hausdorff distance of $B$ from $\Sigma$.   
%Let $A \subset A(\widetilde x,R-\epsilon,R) \cap B$ correspond to one natural part of the intersection of the annulus with $B$.
For sufficiently small $\delta > 0$ (with $\delta\ll d$), let $B_\delta$ and $B_{-\delta}$ denote concentric balls also centered at $c$, with radii $t+\delta$ and $t-\delta$, respectively. Fix $R>0$ and $\epsilon$ such that $\epsilon \ll \delta$.

Let  $\mathcal L(R)$ denote the set of connected components of $\mathcal C(x,R)\cap B$. Similarly, let $\mathcal A_\delta(R)$ and $\mathcal A_{-\delta}(R)$ denote the connected components of $\mathcal A(\widetilde x, R,R-\epsilon)\cap \widetilde{B_{\delta}}$ and $\mathcal A(\widetilde x, R,R-\epsilon)\cap \widetilde{B_{-\delta}}$, respectively (see Figure 6).

Note that to each region $A_{-\delta}\in\mathcal A_{-\delta}(R)$, we can associate a segment $L\in \mathcal L(R)$, namely the segment which corresponds to the boundary component of $A_{-\delta}$ furthest away from the associated singularity. Similarly, for each $L\in\mathcal L(R)$ we can associate a region $A_\delta\in\mathcal A_\delta(R)$ (see Figure 6). Hence we have the following inclusions:

$$\mathcal A_{-\delta}(R)\hookrightarrow \mathcal L(R)\hookrightarrow \mathcal A_{\delta}(R).$$
Note that the reverse inclusions do not necessarily hold.

   \begin{figure}\label{fig:sector}
\centerline{
%\hskip 1cm
\begin{tikzpicture}[scale=0.8]
\draw [thick] (0,0) circle [radius=2.5];
\draw [thick, dashed] (0,0) circle [radius=1.5];
 \draw [red,domain=-30:30] plot ({-9+8*cos(\x)}, {8*sin(\x)});
  \draw [red,ultra thick,domain=-16:16] plot ({-9+8*cos(\x)}, {8*sin(\x)});
  \draw [red,thick,dashed,domain=-30:30] plot ({-9+7*cos(\x)}, {7*sin(\x)});
 \node at (2.7, -0.5) {$B$};
  \node at (1.7, 1) {$B_{-\delta}$};
 \node at (-0.9,1.9) {$L$};
%  \draw [red,thick, dashed] (-2.0,1) --(-1.1,1.1);
  %  \draw [red,thick, dashed] (-2.0,-1) --(-1.1,-1.1);
    \node at (-1.1, -0.04) {$A_{-\delta}$};
%\draw[red] (0,-3) arc (-10:25:10);
%\draw[red, ultra thick] (0,-2) arc (0:15:10);
%\draw [thick] (0,0) -- (2,1)--(4,-1)--(7,2)--(10,1);
%\draw [thick, dashed,blue] (0,0) --(2.4,2.4);
%\draw [thick, dashed,blue] (0,0) --(2.4,-2.4);
%\draw [thick, dashed] (0,0) circle [radius=3.5];
%\draw [fill, red] (0,0) circle [radius=0.1];
%\draw [thick] (0,0) -- (4,0) --(6,2)-- (6,4) --(4, 7)--(0,7)--(-2,5)--(-2,3)--(0,0);
%\node at (10.5, 1) {$\widetilde x$};
%\node at (-4, 3) {${{\mathcal E}(   {p},R - \ell(p))}$};
%\node at (0, -0.5) {$\widetilde {t(p)}$};
%\draw [thick] (0,2) circle [radius=0.5];
%\draw [thick] (-2,0) circle [radius=0.5];
%\draw [thick] (0,-2) circle [radius=0.5];
%\node at (0,2) {$\widetilde A_1$};
%\node at (-2,0) {$\widetilde A_2$};
%\node at (0,-2) {$\widetilde A_3$};
%\node at (1.4, 0.3) {$ \widetilde  {s_n}$};
%\node at (8.5, 1.0) {$ \widetilde  {s_1}$};
\end{tikzpicture}
\hskip 1cm
\begin{tikzpicture}[scale=0.8]
\draw [thick] (0,0) circle [radius=2.5];
\draw [thick, dashed] (0,0) circle [radius=3.5];
 \draw [red,domain=-23:23] plot ({-9+8*cos(\x)}, {8*sin(\x)});
  \draw [red,ultra thick,domain=-16:16] plot ({-9+8*cos(\x)}, {8*sin(\x)});
  \draw [red,thick,dashed,domain=-23:23] plot ({-9+7*cos(\x)}, {7*sin(\x)});
 \node at (2.7, -0.5) {$B$};
  \node at (3.5, 2) {$B_{\delta}$};
    \node at (-1.85, -2.2) {$A_{\delta}$};
    \node at (-0.8, 1) {$L$};
  % \draw [red,thick,dashed] (-2.7,2.8) -- (-1.4,3.2);
   %    \draw [red,thick,dashed] (-2.7,-2.8) -- (-1.4,-3.2);
%     \draw [red,thick,dashed] (-2.05,-1.9) -- (-1.2,-2.25);
%\draw[red] (0,-3) arc (-10:25:10);
%\draw[red, ultra thick] (0,-2) arc (0:15:10);
%\draw [thick] (0,0) -- (2,1)--(4,-1)--(7,2)--(10,1);
%\draw [thick, dashed,blue] (0,0) --(2.4,2.4);
%\draw [thick, dashed,blue] (0,0) --(2.4,-2.4);
%\draw [thick, dashed] (0,0) circle [radius=3.5];
%\draw [fill, red] (0,0) circle [radius=0.1];
%\draw [thick] (0,0) -- (4,0) --(6,2)-- (6,4) --(4, 7)--(0,7)--(-2,5)--(-2,3)--(0,0);
%\node at (10.5, 1) {$\widetilde x$};
%\node at (-4, 3) {${{\mathcal E}(   {p},R - \ell(p))}$};
%\node at (0, -0.5) {$\widetilde {t(p)}$};
%\draw [thick] (0,2) circle [radius=0.5];
%\draw [thick] (-2,0) circle [radius=0.5];
%\draw [thick] (0,-2) circle [radius=0.5];
%\node at (0,2) {$\widetilde A_1$};
%\node at (-2,0) {$\widetilde A_2$};
%\node at (0,-2) {$\widetilde A_3$};
%\node at (1.4, 0.3) {$ \widetilde  {s_n}$};
%\node at (8.5, 1.0) {$ \widetilde  {s_1}$};
\end{tikzpicture}
}
\caption{
(a) We can associate to a region $A_{-\delta}\in \mathcal A_{-\delta}(R)$, a segment $L\in\mathcal L(R)$; and 
(b)  We can associate to $L$ the region $A_\delta \in  \mathcal A_{\delta}(R)$ 
%which is a connected component of $\mathcal A(\widetilde x, R, R-\epsilon)\cap \widetilde{B_{\delta}}$;  
}
\end{figure}
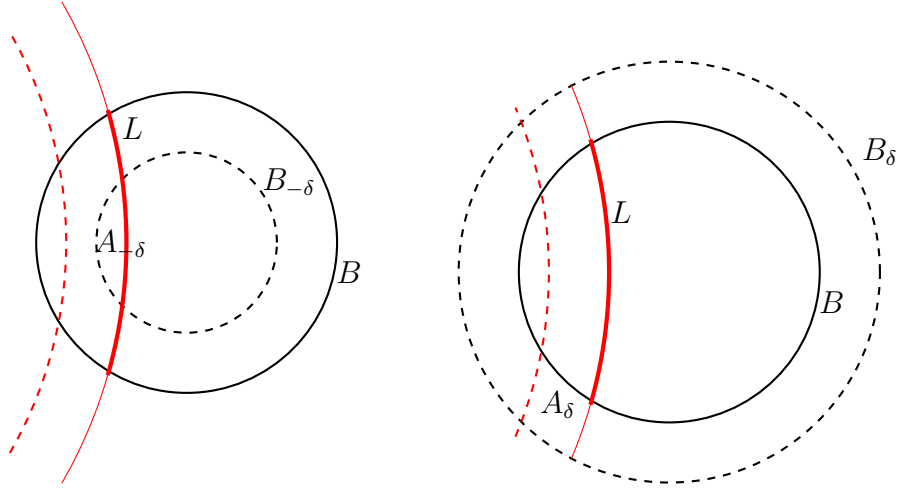
%observe that by taking $\epsilon>0$ sufficiently small 
%(relative to the diameter of the ball $A$)
%and using the Taylor expansion for $e^x$, we can deduce that 

For $L\in\mathcal L(R)$, we will compare $\ell(L)\epsilon$ to the volume $\hbox{Vol}_{\widetilde X}(A_\delta)$ of the associated region $A_\delta\in \mathcal A_\delta(R)$. Using the assumption that $\epsilon \ll \delta$ and 
a little 
Euclidean geometry, it follows that 
$$\ell(L)\epsilon\leq \frac{\hbox{Vol}_{\widetilde X}(A_\delta)}{(1-\frac{\epsilon}{2d})}.$$
Similarly, for $A_{-\delta}\in\mathcal A_{-\delta}(R)$, we can compare $\hbox{Vol}_{\widetilde X}(A_{-\delta})$ to $\ell(L)\epsilon$ for the associated $L\in\mathcal L(R)$ and deduce that 
$$\hbox{Vol}_{\widetilde X}(A_{-\delta})\leq L\epsilon.$$

By summing up the contributions from the aforementioned connected components and using the bounds above, it follows that
$$\hbox{Vol}_{\widetilde X}(\mathcal A(\widetilde x,R,R-\epsilon)\cap \widetilde{B_{-\delta}})\leq  \ell(\mathcal C(x,R)\cap \widetilde B)\epsilon\leq \frac{\hbox{Vol}_{\widetilde X}(\mathcal A(\widetilde x,R,R-\epsilon)\cap \widetilde{B_{\delta}})}{\left(1-\frac{\epsilon}{2d}\right)}. $$
Using the asymptotic formula  (5.1) for annuli and the above bounds, we can deduce that 
$$
\begin{aligned}
 \frac{C(B_{-\delta})}{C(B)}\frac{(1-e^{-h\epsilon})}{\epsilon}
&\leq 
\liminf_{R \to \infty}
\frac{\ell(\mathcal C(x,R) \cap B) }{
(C(B)/h)e^{hR}}\cr
&\leq \limsup_{R \to \infty}
\frac{\ell(\mathcal C(x,R) \cap B) }{
(C(B)/h)e^{hR}}
\cr
&\leq 
\frac{C(B_{\delta})}{C(B)}\frac{(1-e^{-h\epsilon})}{\epsilon}\frac{1}{\left(1-\frac{\epsilon}{2d}\right)}.
\cr
\end{aligned}
$$
Since $\left ( 1 - e^{-h\epsilon } \right)/\epsilon = h + O(\epsilon)$ independently of $R$,
letting $\epsilon \to 0$
we can deduce that 
$$
 \frac{C(B_{-\delta})}{C(B)}h
\leq 
\liminf_{R \to \infty}
\frac{\ell(\mathcal C(x,R) \cap B) }{
(C(B)/h)e^{hR}}
\leq \limsup_{R \to \infty}
\frac{\ell(\mathcal C(x,R) \cap B) }{
(C(B)/h)e^{hR}}
\leq 
\frac{C(B_{\delta})}{C(B)}h.
$$
We can  deduce an asymptotic formula for $\ell(\mathcal C(x,R)\cap B)$ by letting $\delta\rightarrow 0$ and using the absolute continuity of the measure $\mu(A):=C(A)/C(X)$ to conclude that
$$
\begin{aligned}
\ell(\mathcal C(x,R) \cap B)
&\sim C(B)e^{hR}\hbox{ as } R \to \infty.
\end{aligned}
$$
%We have already observed in Theorem  
%A that the growth of the length $\mathcal C(x,R)$ is asymptotically of the form
% $\ell (\mathcal C(x,R)) \sim  C e^{hR}$ as $R \to \infty$.
Finally, we can prove Theorem B by considering the above asymptotic formula, Theorem A and Lemma \ref{lem:coeff}:
$$\lim_{R \to \infty} \mu_R(B)
= \lim_{R \to \infty} \frac{\ell(\mathcal C(x, R)\cap B)}{\ell(\mathcal C(x, R))}=\lim_{R\rightarrow \infty} \frac{C(B)e^{hR}}{(C/h)e^{hR}}=\frac{C(B)}{C(X)}=\mu(B),$$  
for all (small) balls $X\backslash \Sigma$.

%To see that $\mu$ is not equivalent to $\hbox{Vol}_X$, one can consider how $C(A)$ varies as a small set $A\subset X$ is translated around the surface.

\begin{rem}
Although the probability measures $\mu$ and  $\hbox{Vol}_X$ are equivalent they are not equal.
\end{rem}

\section{Distribution of closed geodesics}

To put Theorem A and Theorem B into context, we can compare  these  to corresponding results for closed geodesics on $X$. We first describe how to recover an unpublished result of Eskin and Rafi and then we will present a new distribution result for closed geodesics.

%Let $P$ be a translation surface with at least one singularity.  
% The following unpublished result is due to Eskin-Rafi.

%\begin{thm}[Eskin-Rafi]\label{thm1}
%There exists $h > 0$  such that the number of (primitive) closed geodesics of length at most $T$
%is asymptotic to $e^{hT}/hT$ as $T \to \infty$.
%\end{thm}

%This theorem was first  proved by Eskin and Rafi, but their proof remains unpublished.

\subsection{Closed geodesics}

%Let $\mathcal S$ be the family of saddle connections, i.e, straight line segments between points in the finite 
%set of singularities   $\Sigma$.
 % Given $s\in \mathcal S$ we denote its length by $\ell(s) >0$.    We recall:

%\begin{lemma}[Eskin-Masur]
%There exists $C_0>0$ such that   $\hbox{Card}\{s \in \mathcal S \hbox{ : } \ell(s) \leq T \} \leq C_0 T^2$.  \end{lemma}

The following definition gives a natural characterization of closed geodesics on translation surfaces that are of interest to us.
\begin{definition} 
A closed geodesic on a translation surface is a  saddle connection path corresponding to an  (allowed) finite string of oriented saddle connections  $q = (s_1, \ldots, s_n)$, of length $|q|=n$ and up to cyclic permutation, with the additional requirement that $s_ns_1$ is a saddle connection path. We say that $q$ is primitive if it is not a multiple concatenation of a shorter closed geodesic.
\end{definition}

It is convenient to introduce the following notation.

\medskip
\noindent
{\bf Notation.}
Let $\mathcal Q(T)$ denote the set of oriented  primitive closed geodesics on $X$ of length less than or equal to $T$.\footnote{Formally we will count primitive geodesics, which does not include repeated geodesics, but for the purposes of asymptotic counting there is no difference.}
Let $\mathcal Q:=\bigcup_{T>0}\mathcal Q(T)$ denote the set of all oriented primitive  closed geodesics on $X$.
\bigskip

We want to count the number
$\pi(T) := \#\mathcal Q(T)$
of oriented primitive closed geodesics of length at most $T$.  
We adopt the convention that we do not count closed geodesics that do not pass through a singularity, thus avoiding the complication of having uncountably many closed geodesics of the same length.\footnote{Alternatively, we could count one such geodesic from each family but then their growth would only be polynomial and this would not effect the asymptotic.} 
% These correspond to those in the definition above.

It is easy to show that the exponential growth rate of $\pi(T)$ is equal to the volume entropy of the surface, i.e.,

$$h=\lim_{T\rightarrow \infty}\frac{1}{T}\log \pi(T).$$

\medskip
\noindent
{\bf Notation.}
Given a saddle connection $s_0 \in \mathcal S$, we define
 $\pi_{s_0}(T):= \sum_{q\in\mathcal Q(T)}  \frac{\ell_{s_0}(q)}{\ell(q)}$, where 
 $\ell_{s_0}(q)$ is the length contribution from the saddle connection $s_0$ to $\ell(q)$ (i.e., if $s_0$ occurs $m$ times in $q=(s_{1}, \ldots, s_{n})$, then $\ell_{s_0}(q)=m\ell(s_0)$).
 
% Given a Borel set $A\subset X$, we define $\pi_A(T):=\sum_{q\in\mathcal Q(T)}\frac{\ell_A(q)}{\ell(q)}$, where $\ell_A(q)$ denotes the length of the part of $q$ which lies in $A$.

For any $T$ sufficiently large we can associate a probability measure $m_T(A) = \frac{1}{\pi(T)} \sum_{q\in\mathcal Q(T)}\frac{\ell_A(q)}{\ell(q)}$,  where
$A$ is a Borel set and 
 $\ell_A(q)$ denotes the length of the part of $q$ which lies in $A$.

\smallskip

%The counting result can now be more formally written as follows:

\begin{namedthm*}{Theorem C}\label{closed}
Let $X$ be a translation surface with at least one singularity.
\begin{enumerate}
\item 
Then
 $$\pi(T) \sim \frac{e^{hT}}{hT} \hbox{  as  } T \to \infty,
 \hbox{  i.e., 
$\lim_{T \to \infty}\frac{\pi(T)}{ e^{hT}/hT}=1$.}
 $$
\item
For each $s_0 \in \mathcal S$, there exists $0<v(s_0)<1$ such that
$$
\lim_{T \to \infty} \frac{\pi_{s_0}(T)}{\pi(T)} = v(s_0).
$$
In particular, $v$  gives  a probability vector  on $\mathcal S$.
\item
% For all Borel subsets $A\subset X$, there exists $0\leq v'(A)\leq 1$ such that
%$$\lim_{T\rightarrow \infty} \frac{\pi_A(T)}{\pi(T)}=v'(A).$$
%Furthermore,  $v'$ defines a probability measure on $X$ which is singular with respect to the volume measure on $X$
The measures $m_T$ converge 
in the weak-star topology 
to a probability measure $\nu$ which is 
singular with respect to the volume measure on $X$.
\end{enumerate}
\end{namedthm*}
Part 1 of Theorem C is analogous to Theorem A. Parts 2 and 3 are distribution results for saddle connections and closed geodesics, respectively. We will prove part 1 and 2 and note that 
%the proof of part 3 is the same as the proof of part 2, except that we replace $\pi_{s_0}(T)$ with $\pi_A(T)$.
proof 3 can be deduced from part 2.
\begin{rem}
An alternative formulation of the distribution result in part 2 of Theorem C, would be to average the length contribution from $s_0$ across all of the geodesics in $\mathcal Q(T)$ and then obtain the following limit
$$\lim_{T\rightarrow \infty}\frac{\sum_{q\in\mathcal Q(T)} \ell_{s_0}(q)}{\sum_{q\in\mathcal Q(T)}\ell(q)}=v(s_0)$$
by analogy with \cite{parry2}.

Similarly, we obtain an alternative formulation of part 3 of Theorem C, as follows

$$\lim_{T\rightarrow \infty}\frac{\sum_{q\in\mathcal Q(T)}\ell_A(q)}{\sum_{q\in\mathcal Q(T)} \ell(q)}=\nu(A).$$
\end{rem}

The first part of this theorem was announced by Eskin and Rafi \cite{er}.
This result is of  the same  general form as the well known asymptotic formula for closed geodesics for negatively curved Riemannian surfaces.
Two of the classical  approaches that  are successful for surfaces of  negative curvature (the approaches of Selberg and Margulis)  do not have natural analogues in the present context; however, the method of dynamical zeta functions can be  applied  (see \cite{parry1}) and  bears similarities to the proof of Theorem A.

%The approach  is based on zeta functions, i.e., complex functions defined in terms of the lengths of closed geodesics.

%We can also formulate an equidistribution result for closed geodesics  
%which measures the occurrences of particular saddle connection $j \in \mathcal S$, say.

%\begin{rem}
Part 3 of Theorem C was proved by
%another recent  equidistribution result.
%For $T > 0$, let $\mu_T$ be the invariant probability measure supported on the finite set of  closed geodesics of length at most $T$. Then there exists a probability measure $\mu$ such that $\lim_{T \to \infty} \mu$
%(in the weak-star topology).
%\end{rem}
the  distribution result by Call, Constantine, Erchenko, Sawyer and Work \cite{call} using a completely different method.

%\begin{cor}
%Let $v_T$ be the probability measure supported on closed geodesics of length at most $T$ parameterized by length.  Then $v_T$ converges in the weak star topology to $v$.
%\end{cor}
%\begin{proof}
%Let $D \subset X \setminus \Sigma$ be a small disk and for any $s\in \mathcal S$ and let $\ell (s \cap D)/\ell(s)$ be the proportion of this saddle connection intersecting $D$.  In particular, $\lim_{T \to +\infty}\mu_T(D) =v(D) = \sum_{s\in \mathcal S } \frac{v(s) \ell (s \cap D)}{\ell(s)}$.
%\end{proof}

\subsection{Zeta functions}
We now present  the definition of the zeta functions that will be used in the proof of part 1 of Theorem C

\begin{definition}
We can formally define the {\it zeta function}
 by the Euler product
 $$
\zeta(z) = \prod_{q\in\mathcal Q} \left( 1 - e^{- z \ell(q)}\right)^{-1},  z \in \mathbb C
$$
where the product is over all oriented primitive closed geodesics.
\end{definition}

This converges to a non-zero analytic function for $Re(z)>h = h(X)$.

We next  give   the definition of a modified zeta function  that will be used in the proof of 
part 2 of 
Theorem C.
%Given a specific  saddle connection $j$ we can associate to each closed geodesics $\underline s$ the number 
%$n(\underline s)$
%of occurrences of $j$ in $\underline s$.

\begin{definition}
We can formally define a {\it modified zeta function} for a given  $s_0\in \mathcal S$
 by
 $$
\zeta_{s_0}(z,t) = \prod_{q\in \mathcal Q} \left( 1 - e^{- z \ell(q) + t \ell_{s_0}(q)}\right)^{-1},  z \in \mathbb C \hbox{ and } t \in \mathbb R.
$$
\end{definition}

Given $t\in\mathbb R$, this 
converges  to a non-zero  analytic function for $Re(z)$ sufficiently large. Clearly, when $t=0$, $\zeta_{s_0}(z,t)=\zeta(z)$.

The proof of Theorem C requires us to work  with a  different  presentation of these zeta functions. 
Let 
%$\underline{\mathcal Q_n'}$ 
$\mathcal S_n$
denote the set of oriented saddle connection strings
$p = (s_1, \ldots, s_n)$
 of  length $n$ corresponding to general oriented
(not necessarily primitive)
 closed geodesics $q$. 
Each element $q \in \mathcal Q$ consisting of $n$ saddle connections will give rise to 
 $n$   elements of $\mathcal S_n$ corresponding to the cyclic permutations.
 For $p \in 
%\underline{\mathcal Q_n'}
\mathcal S_n
$ let $\ell(p):=\sum_{i=1}^n \ell(s_i)$ and, given $s\in \mathcal S$,  we let $\ell_s(p)$ denote the length contribution from $s$ to $\ell(p)$.

\begin{lemma}\label{lem:zetas}
For a given $t\in\mathbb R$, for $Re(z)$ sufficiently large, we can write
%\begin{enumerate}
 %   \item $$\zeta(z)=\exp\left(\sum_{m=1}^\infty\frac{1}{m} \sum_{q\in \mathcal Q} e^{-zm\ell(q)}\right)$$
 %   \item $$\zeta(z) = \exp \left(  \sum_{n=1}^\infty \frac{1}{n} \sum_{\underline s \in \underline{\mathcal Q_n'}} e^{-z \ell (\underline s)}\right).\eqno(6.1)$$
%\end{enumerate}
$$\zeta_{s_0}(z,t) = \exp \left(   \sum_{n=1}^\infty \frac{1}{n} \sum_{p \in
    % \underline{\mathcal Q_n'}
    \mathcal S_n
    } e^{-z \ell (p)+t\ell_{s_0}(p)}\right).\eqno(6.1)$$
    In particular, when $t=0$ we can write
    $\zeta(z) = \exp \left(   \sum_{n=1}^\infty \frac{1}{n} \sum_{p \in
    % \underline{\mathcal Q_n'}
    \mathcal S_n
    } e^{-z \ell (p)}\right).$
\end{lemma}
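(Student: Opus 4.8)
The plan is to expand the Euler product and recognise the resulting power series as the exponential of the stated double sum. First I would take logarithms of the Euler product defining $\zeta_{s_0}(z,t)$, using that for $Re(z)$ sufficiently large each factor $\left(1 - e^{-z\ell(q) + t\ell_{s_0}(q)}\right)^{-1}$ is close to $1$, so that $\log\zeta_{s_0}(z,t) = -\sum_{q \in \mathcal Q}\log\left(1 - e^{-z\ell(q)+t\ell_{s_0}(q)}\right)$ converges absolutely; this is legitimate because the product over primitive closed geodesics converges for $Re(z) > h$ (as already noted after the definitions), and one checks the modified version converges for $Re(z)$ large by comparison, since $\ell_{s_0}(q) \leq \ell(q)$ forces $e^{-z\ell(q)+t\ell_{s_0}(q)} \leq e^{-(Re(z) - |t|)\ell(q)}$. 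Then I would apply the Taylor expansion $-\log(1-w) = \sum_{m=1}^\infty w^m/m$ with $w = e^{-z\ell(q)+t\ell_{s_0}(q)}$, giving
$$
\log\zeta_{s_0}(z,t) = \sum_{q \in \mathcal Q}\sum_{m=1}^\infty \frac{1}{m} e^{-mz\ell(q) + mt\ell_{s_0}(q)} = \sum_{q \in \mathcal Q}\sum_{m=1}^\infty \frac{1}{m} e^{-z\ell(q^m) + t\ell_{s_0}(q^m)},
$$
where $q^m$ denotes the $m$-fold concatenation of $q$ with itself, using the additivity $\ell(q^m) = m\ell(q)$ and $\ell_{s_0}(q^m) = m\ell_{s_0}(q)$.

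The key combinatorial step is then to reorganise this double sum over (primitive geodesic, multiplicity) pairs into a sum over $\mathcal S_n$. A general (not necessarily primitive) closed geodesic is uniquely $q^m$ for a primitive $q$ and some $m \geq 1$, and such a closed geodesic of total word length $n = m|q|$ contributes exactly $n$ cyclic-permutation representatives to $\mathcal S_n$. Conversely, each $p \in \mathcal S_n$ corresponds to a closed geodesic of word length $n$, and the number of distinct cyclic shifts equals $|q| = n/m$ when the underlying primitive geodesic has length $n/m$ — but more carefully, the orbit of $p$ under cyclic permutation has size exactly $n/(\text{number of times the primitive block repeats within }p)$; summing $1$ over all $p \in \mathcal S_n$ and grouping by the underlying primitive $q$ with $n = m|q|$ yields that the number of $p\in\mathcal S_n$ arising from a fixed primitive $q$ of word length $|q|$ is precisely $|q|$ (one cyclic orbit of size $|q|$, counted for the single choice $m = n/|q|$). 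Thus $\sum_{p \in \mathcal S_n} e^{-z\ell(p)+t\ell_{s_0}(p)} = \sum_{q \text{ primitive},\, m|q| = n} |q|\, e^{-z\ell(q^m)+t\ell_{s_0}(q^m)}$, and dividing by $n = m|q|$ converts the weight $|q|/n$ into $1/m$, matching the previous display. Assembling, $\sum_{n=1}^\infty \frac{1}{n}\sum_{p\in\mathcal S_n} e^{-z\ell(p)+t\ell_{s_0}(p)} = \sum_{q}\sum_{m\geq1}\frac{1}{m}e^{-z\ell(q^m)+t\ell_{s_0}(q^m)} = \log\zeta_{s_0}(z,t)$, which is (6.1) after exponentiating. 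Setting $t = 0$ gives the final displayed formula for $\zeta(z)$, since then $\ell_{s_0}$ drops out and $\zeta_{s_0}(z,0) = \zeta(z)$ by the remark preceding the lemma.

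I expect the main obstacle to be the bookkeeping in the cyclic-permutation count — specifically, being careful that every (primitive $q$, multiplicity $m$) pair is counted with the correct multiplicity $n$ of cyclic representatives in $\mathcal S_n$ and that nothing is double-counted when a word in $\mathcal S_n$ has a nontrivial cyclic symmetry (which happens exactly when $m > 1$, i.e. the word is an iterate). The cleanest way to handle this is to work directly with the map $\mathcal S_n \to \{$closed geodesics of word length $n\}$ and note it is exactly $n$-to-one onto its image (each closed geodesic of word length $n$, primitive or not, has $n$ cyclic starting points as a \emph{string}, even though as an unordered loop it may have fewer distinct representatives); the factor $1/n$ then precisely cancels this overcounting and reduces everything to the standard $\zeta = \exp(\sum \frac{1}{m}(\text{orbit sums}))$ identity. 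The analytic convergence justifications (absolute convergence of all rearranged sums for $Re(z)$ large) are routine given Property (1) of Lemma \ref{lem:transhyp} and the estimate $\ell_{s_0}(q)\le \ell(q)$.
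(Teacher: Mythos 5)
Your strategy is the same as the paper's: take the logarithm of the Euler product, Taylor-expand $-\log(1-w)$ to get the double sum over primitive $q\in\mathcal Q$ and multiplicities $m\geq 1$, and then regroup this as a sum over strings in $\mathcal S_n$ via cyclic-orbit counting. The computation in your middle paragraph does land on the correct identity. But you assert a false cyclic count twice. You open the combinatorial step by claiming that $q^m$, of total word length $n = m|q|$, ``contributes exactly $n$ cyclic-permutation representatives to $\mathcal S_n$,'' and you close the proposal by recommending, as the cleanest formulation, that the map $\mathcal S_n \to \{\hbox{closed geodesics of word length } n\}$ is ``exactly $n$-to-one onto its image.'' Both claims are wrong when $m>1$: the tuple $p'=q^m$ has period $|q|$, so its cyclic orbit inside $\mathcal S_n$ has size $|q| = n/m$, not $n$, and the fibre of that map over the class of $q^m$ has $|q|$ elements. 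Your own intermediate sentences correct this (``the number of $p\in\mathcal S_n$ arising from a fixed primitive $q$ of word length $|q|$ is precisely $|q|$''), and it is exactly this $|q|$-to-one count which, combined with the weight $1/n = 1/(m|q|)$, yields the factor $1/m$ needed to match the Taylor expansion. Insisting on $n$-to-one would give weight $n/n = 1$ in place of $1/m$ and the identity would fail, so when you write this up you should delete the ``$n$-to-one'' framing entirely and keep only the $|q|$-to-one count.

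For comparison, the paper's proof organises the same bookkeeping more cleanly by introducing $\mathcal S_k^{\mathrm{prim}}$ (strings corresponding to primitive closed geodesics of word length $k$, each primitive $q$ giving $k$ of them), writing $\sum_{q\in\mathcal Q}(\cdot) = \sum_k \frac{1}{k}\sum_{p\in\mathcal S_k^{\mathrm{prim}}}(\cdot)$, and then substituting $n = km$, which uses the bijection $(p,m)\mapsto p^m$ from pairs with $p$ primitive of word length $k=n/m$ onto $\mathcal S_n$. That is the content of your correct middle sentences, stated without the misleading detour. Your convergence remarks and the $t=0$ specialisation are fine.
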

\begin{proof}
%For simplicity, we will prove the results   for $\zeta(z)=\zeta_{s_0}(z,0)$ where $t=0$. 
This is a routine bookkeeping exercise.
We can first write
 $$
 \begin{aligned}
\zeta_{s_0}(z,t) 
%&= \prod_{q\in\mathcal Q} \left( 1 - e^{- z \ell(q) + t \ell_{s_0}(q)}\right)^{-1} \cr
&= \exp\left(-\sum_{q\in\mathcal Q} \log(1-e^{-z\ell(q) + t \ell_{s_0}(q) })\right)\cr
&=\exp\left(\sum_{q\in\mathcal Q}\sum_{m=1}^\infty \frac{e^{-zm\ell(q)    + t m\ell_{s_0}(q)    } }{m}\right),
 \end{aligned}
 \eqno(6.2)
$$
using the Taylor expansion for $\log(1-x)$.

Given $k \geq 1$, 
%$\underline{\mathcal Q_n}$ 
let 
$\mathcal S_k^{prim}\subset \mathcal S_k$ 
denote the set of (allowed) oriented saddle connection strings $p= (s_1,\ldots ,s_k)$ corresponding to oriented primitive closed geodesics $q$ which consist of $k$ saddle connections. 
In particular, each $q$ contributes $k$ strings in  
%$\underline{\mathcal Q_n}$.
$\mathcal S_k^{prim}$ (by cyclic permutations). For each $m \geq 1$ we can write 
$$\sum_{q\in \mathcal Q}e^{-zm\ell(q) +tm \ell_{s_0}(q)}
=\sum_{k=1}^\infty\frac{1}{k}\sum_{p\in 
%\underline{\mathcal Q_k}
\mathcal S_k^{prim}
}e^{-zm\ell(p)+t m \ell_{s_0}(p)   }.$$
Using the above equation and (6.2)  we see that 
$$
\begin{aligned}
\zeta_{s_0}(z,t)
%&=\exp\left(\sum_{q\in\mathcal Q}\sum_{m=1}^\infty \frac{e^{-zm\ell(\underline s)    + t m\ell_{s_0}(\underline %s)    } }{m}\right)\cr
&=\exp\left(\sum_{k=1}^\infty\sum_{p \in 
%\underline{\mathcal Q_k}}
\mathcal S_k^{prim}}
\sum_{m=1}^\infty \frac{e^{-zm\ell(p)    + t m\ell_{s_0}(p)    } }{km}\right)\cr
&=\exp\left(\sum_{n=1}^\infty\sum_{p'\in
% \underline{\mathcal Q_n'}
\mathcal S_n
} 
\frac{e^{-z\ell(p')    + t \ell_{s_0}(p')    }}{n}\right),
\end{aligned}
$$
where we have set $n=km$ and replaced 
$p \in 
%$m\underline s$  for $\underline s\in
% \underline{\mathcal Q_k}
\mathcal S_k^{prim}
$ 
and $m \geq 1$ by $p' \in \mathcal S_{n}$.
\end{proof}

The asymptotic results for closed geodesics follow from analytic properties of the above zeta functions (by analogy with the way in which the prime number theorem follows from analytic properties of the Riemann zeta function).

\subsection{Extending the zeta function(s)}
We want to now consider  $z\in \mathbb C$ with $Re(z)>0$ and $t\in\mathbb R$. For part 1 of Theorem C, it suffices to set $t=0$ which leads to some simplifications in the statements below. To extend the zeta functions, it is convenient to introduce the following matrices (generalizing the $M_z$ from Definition 3.5).

\begin{definition}
 Given a choice of saddle connection $s_0 \in \mathcal S$, we  can consider the infinite matrix $K_{z,t}$  with rows and columns  indexed by $\mathcal S$ where 
%$$K(s, s') = e^{-z(\ell(s) + \ell(s'))/2 }$$ 
$$
K_{z,t}(s,s') =
\begin{cases}
 e^{-z \ell(s') + t \ell_{s_0}(s')} & \hbox{ if  $ss'$ is an allowed  geodesic path,} \cr
 0 & \hbox{ otherwise }
 \end{cases}
$$
where the rows and columns are indexed by saddle connections partially ordered by their lengths.
\end{definition}

%We first observe that the associated operator $\widehat K_{z,t}: \ell^\infty \to \ell^\infty$ 
%is a bounded linear operator; in particular, 
%$$
%\|\widehat K_{z,t}\|_\infty \leq  \sum_{s\in\mathcal S}\left\vert e^{-z \ell (s) +t  \ell_j(s) }\right\vert\leq 
%\sum_{s\in\mathcal S} e^{-(Re(z)-|t|)\ell(s)  }
%\sum_{s,s'} e^{-z(\ell(s) + \ell(s'))/2) }
 %\leq \left(
% \sum_{s} e^{-z(\ell(s)/2) } 
% \right)^2 
% \leq \left(
%C_0  \sum_{n=1}^\infty e^{-Re(z)(n/2) } n^2\right)^2 <\infty
% <\infty
%$$
%for $|t|<Re(z)$ by Property 1 in Lemma 3.6.

%The spectral  properties of the matrix will lead to the analytic properties of the zeta functions.
Since it is easier to deal with finite matrices, 
we can write the matrix $K_{z,t}$ as
$$
K_{z,t} = \left( 
\begin{matrix}
A_{z,t}  & U_{z,t} \cr
V_{z,t} &B_{z,t} \cr 
\end{matrix}
\right),
$$
where $A_{z,t} $ is the $k\times k$ finite sub-matrix of $K_{z,t}$ corresponding to the first $k\in\mathbb N$, say, oriented  saddle connections although  the other sub-matrices $U_{z,t} ,V_{z,t} ,B_{z,t} $ are infinite (cf. \cite{hk}).  

%\begin{lemma}
%Let $t\in\mathbb R$ satisfy $|t|<Re(z)$. Then for each $m \geq 1$  the trace $\hbox{tr}(K_{z,t}^m) : = \sum_{s \in \mathcal S} K_{z,t}^m(s,s)$ is finite.
%\end{lemma}

%\begin{proof} Observe that for each $m \geq 1$:
%$$
%\left\vert\sum_{s\in\mathcal S} K_{z,t}^m(s,s) \right\vert
%\leq  \left(\sum_{s\in\mathcal S}\left\vert e^{-z \ell (s) +t  \ell_j(s) }\right\vert \right)^m
% \leq \left(
% \sum_{s} e^{-z(\ell(s)/2) } 
% \right)^2 
% \leq \left(\sum_{s\in\mathcal S} e^{-(Re(z)-|t|)\ell(s)  }\right)^{m} <\infty
%$$
%for $|t|<Re(z)$.
%\end{proof}

Recall that for the proof of Theorem A, for any $\epsilon>0$, we obtained a meromorphic extension of $\eta(z)$ to the half-plane $Re(z)>\epsilon$, whose poles occur at $z$ for which 1 is an eigenvalue of the matrix $W_{z,t}:=A_{z,t}+U_{z,t}(I-B_{z,t})^{-1}V_{z,t}$. We will pursue a similar strategy here.
To this end,  consider two formally defined auxiliary functions for $s_0\in \mathcal S$,  $z \in \mathbb C$ and $t\in\mathbb R$: 
$$
f_{s_0}(z,t)  = \det (I -  W_{z,t}) 
\hbox{
and
}g_{s_0}(z,t)=\hbox{exp}\left(-\sum_{n=1}^\infty \frac{1}{n}\sum_{
p \in
% \underline{
%\mathcal Q_n(k)'
\mathcal S_n(k)
}
%|q| =n
e^{-z\ell(p)+t\ell_{s_0}(p)}\right),$$
where 
%$\underline{\mathcal Q_n(k)'}\subset \underline{ \mathcal Q_n'}$ 
$\mathcal S_n(k) \subset \mathcal S_n$
denotes the set of oriented saddle connection strings $p = (s_1, \ldots, s_n)$
  of length $n$ corresponding to
oriented closed geodesics on $X$, and for which 
%consistingof $n$ saddle connections 
all of the   $s_j$ ($1 \leq j \leq n$) are  disjoint from the first $k$ saddle connections in the ordering on $\mathcal S$.

%, i.e., 
%$A=A_{z,t}$, $B=B_{z,t}$, $U=U_{z,t}$, $V=V_{z,t}$ have entries 
%which are of the form $\exp \left( - z \ell(\gamma) + t \ell_{j}(\gamma) \right)$ for some saddle connection path $\gamma$.
%By definition  $A_{z,t} $ is a finite matrix.
%Furthermore,  $U_{z,t} (I-wB_{z,t} )^{-1}V_{z,t} $ is  also a finite matrix (provided $(I-wB_{z,t} )^{-1}$ exists).
%with $(\gamma, \gamma')$ entries
% [with $\ell(\gamma), \ell(\gamma') \leq T$ ]
%given by the infinite   series 
%%$$
%%e^{-z\ell(\gamma) + t n(\gamma)} + 
%\sum_{n=1}^\infty
%w^{n}
%%\sum_{\gamma_1, \cdots, \gamma_n}
%e^{
%-z (\ell(\gamma)
%+ \ell(\gamma_1) +
%\cdots
%+ \ell(\gamma_n)
%+ t  (n(\gamma)
%+ n(\gamma_1) +
%\cdots
%+ n(\gamma_n)
%}
%$$
%where 
%$\ell(\gamma), \ell(\gamma) \leq L$ and 
%$\ell(\gamma_1), \cdots, \ell(\gamma_n) >  L$.

\begin{lemma}
Let $s_0 \in \mathcal S$.
Fix $\epsilon>0$ and let $|t|<\epsilon$. Provided
$k$ (i.e., the size of $W_{z,t}$)  is sufficiently large,
%and 
 %$s$ is  not one of the  first $k$-saddle connections 
 the functions $g_{s_0}(z,t)$ and $f_{s_0}(z,t)$ are analytic on $Re(z)>\epsilon$.
 \end{lemma}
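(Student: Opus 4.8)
The plan is to analyze the two functions separately, since they are of rather different natures. For $g_{s_0}(z,t)$, the key point is that the sum defining it only involves saddle connection strings $p=(s_1,\dots,s_n)$ whose constituent saddle connections avoid the first $k$ elements of $\mathcal S$ in the length ordering. Since $\mathcal S$ is ordered by non-decreasing length, the first $k$ saddle connections are precisely the $k$ shortest, so every saddle connection appearing in such a $p$ has length at least $\ell(s_{k+1}) =: L_k$, with $L_k \to \infty$ as $k\to\infty$. First I would bound the inner sum $\sum_{p \in \mathcal S_n(k)} e^{-z\ell(p)+t\ell_{s_0}(p)}$. Writing $\sigma = \mathrm{Re}(z) > \epsilon$ and $|t| < \epsilon$, and using that $\ell_{s_0}(p) \le \ell(p)$ together with $\ell(p) = \sum_i \ell(s_i) \ge nL_k$, one gets $|e^{-z\ell(p)+t\ell_{s_0}(p)}| \le e^{-(\sigma - |t|)\ell(p)}$. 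Combining this with Property (1) of Lemma~\ref{lem:transhyp} (applied at exponent $\sigma - |t| > 0$), the sum over all strings of length $n$ built from saddle connections of length $\ge L_k$ is dominated by $\big(\sum_{s : \ell(s) \ge L_k} e^{-(\sigma-|t|)\ell(s)/2}\big)^n \cdot e^{-(\sigma-|t|)nL_k/2}$, or more simply by $\theta_k^n$ where $\theta_k := \sum_{s : \ell(s)\ge L_k} e^{-(\sigma-|t|)\ell(s)}$ can be made $< 1$ (in fact $\to 0$) by choosing $k$ large, since it is the tail of a convergent series. Dividing by $n$ and summing, $\sum_n \frac{1}{n}\theta_k^n = -\log(1-\theta_k)$ converges, so the exponent is an absolutely and locally uniformly convergent sum of analytic functions on $\mathrm{Re}(z) > \epsilon$, hence $g_{s_0}(z,t)$ is analytic there. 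A uniformity check over compact subsets of $\{\mathrm{Re}(z) > \epsilon\}$ gives the analyticity by the standard Weierstrass argument.

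For $f_{s_0}(z,t) = \det(I - W_{z,t})$, the point is that $W_{z,t} = A_{z,t} + U_{z,t}(I - B_{z,t})^{-1}V_{z,t}$ is a \emph{finite} $k\times k$ matrix, so its determinant is a polynomial in the entries of $A_{z,t}, U_{z,t}, V_{z,t}$ and $(I-B_{z,t})^{-1}$. The entries of $A_{z,t}, U_{z,t}, V_{z,t}$ are each either $0$ or of the form $e^{-z\ell(s') + t\ell_{s_0}(s')}$, manifestly entire in $z$; the only issue is the factor $(I - B_{z,t})^{-1}$, where $B_{z,t}$ is the infinite matrix corresponding to the saddle connections $s_{k+1}, s_{k+2}, \dots$. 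Here I would use Property (1) of Lemma~\ref{lem:transhyp} again: the operator norm of $\widehat B_{z,t}$ on $\ell^\infty$ is bounded by $\sup_s \sum_{s'} |K_{z,t}(s,s')| \le \sum_{s' : \ell(s') \ge L_k} e^{-(\sigma - |t|)\ell(s')} = \theta_k$, which is $< 1$ for $k$ large (uniformly on compacta in $\{\mathrm{Re}(z)>\epsilon\}$, $|t|<\epsilon$). Therefore $(I - \widehat B_{z,t})^{-1} = \sum_{m\ge 0} \widehat B_{z,t}^m$ converges in operator norm, is analytic in $(z,t)$ by the same Weierstrass/uniform-limit argument (each $\widehat B_{z,t}^m$ has entries that are finite sums — over paths of length $m$ — of entire functions, and the series converges locally uniformly), and its matrix entries are bounded. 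Then $W_{z,t}$ has analytic entries and $f_{s_0}(z,t) = \det(I - W_{z,t})$ is a polynomial in finitely many analytic functions, hence analytic on $\mathrm{Re}(z) > \epsilon$.

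The main obstacle — though it is more a matter of care than of depth — is making the "$k$ sufficiently large" uniform in the right way: one must choose $k$ so that $\theta_k < 1$ holds simultaneously for all $z$ in a neighbourhood of the closed half-plane $\mathrm{Re}(z) \ge \epsilon$ and all $|t| < \epsilon$, which works because $\theta_k = \theta_k(\sigma - |t|)$ is monotone decreasing in its argument and one only needs it at the worst-case value $\sigma - |t| > 0$ bounded below away from $0$ (say by $\epsilon/2$ if one shrinks to $|t| < \epsilon/2$, or one simply notes $\sigma > \epsilon$ and $|t| < \epsilon$ need not force $\sigma - |t|$ positive — so in fact one should take the hypothesis to mean $\mathrm{Re}(z) > \epsilon$ and $|t|$ small relative to $\epsilon$, and then $\sigma - |t| \ge \epsilon - |t| > 0$). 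Once this uniform smallness is secured, everything else is the routine observation that uniform limits of analytic functions are analytic and that determinants of matrices with analytic entries are analytic. I would also remark that the same estimate shows $(I - \widehat B_{z,t})^{-1}$ exists as claimed in the construction of $W_{z,t}$, tying the lemma back to the setup preceding it.
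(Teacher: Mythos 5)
Your proposal is correct and takes essentially the same approach as the paper's own proof: both reduce analyticity of $g_{s_0}$ to the geometric-series bound $\bigl\vert\sum_{n}\tfrac{1}{n}\sum_{p\in\mathcal S_n(k)}e^{-z\ell(p)+t\ell_{s_0}(p)}\bigr\vert\leq\sum_n\theta_k^n$ with $\theta_k=\sum_{s\in\mathcal S(k)}e^{-(\epsilon-|t|)\ell(s)}<1$ for large $k$, and analyticity of $f_{s_0}$ to $\|B_{z,t}\|\leq\theta_k<1$ so that $(I-B_{z,t})^{-1}$ and hence the finite determinant are well defined. The only difference is expository: you spell out the locally uniform (Weierstrass) convergence step and use the slightly tighter $\sigma-|t|$ in place of the paper's $z$-independent $\epsilon-|t|$, neither of which changes the argument.
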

 \begin{proof}
Let $\mathcal S(k) \subset \mathcal S$  consist of  those  saddle connections $s$ 
which are {\it not}  in the first 
 $k$ in the partial ordering. Then
 $$
 \begin{aligned}
\left\vert\sum_{n=1}^\infty \frac{1}{n}\sum_{
p \in 
\mathcal S_n(k)
%\underline{\mathcal Q_n(k)'}
}e^{-z\ell(p)+t\ell_{s_0}(p)}\right\vert  &\leq \sum_{n=1}^\infty \left(\sum_{s \in\mathcal S(k)}\left\vert e^{-z \ell (s) +t  \ell_{s_0}( s) }\right\vert \right)^n\cr
% \leq \left(
% \sum_{s} e^{-z(\ell(s)/2) } 
% \right)^2
 &\leq\sum_{n=1}^\infty \left(\sum_{s\in\mathcal S(k)} e^{-(\epsilon-|t|)\ell(s)  }\right)^{n}.
  \end{aligned}
  $$
  Consequently, $g_{s_0}(z,t)$ is analytic for $Re(z)>\epsilon$ if $\sum_{s\in\mathcal S(k)} e^{-(\epsilon-|t|)\ell(s)}<1$, which holds for $k$ sufficiently large by virtue of the polynomial growth of lengths of saddle connections.
  
 For $f_{s_0}(z,t)$ to be analytic on $Re(z)>\epsilon$, it suffices to show that $(I-B_{z,t})$ is invertible, which holds provided  $||B_{z,t}||<1$. To this end, we note that
 
 $$||B_{z,t}||\leq \sum_{s\in\mathcal S(k)} e^{-(\epsilon-|t|)\ell(s)}$$
 and hence again $||B_{z,t}||<1$ for $k$ sufficiently large.
\end{proof}

We can now use the auxiliary functions $f_{s_0}$ and $g_{s_0}$  to provide an extension of the zeta functions.

\begin{lemma}\label{zetaext} Let $s_0 \in \mathcal S$.
Fix $\epsilon > 0$ and $|t|<\epsilon$. Then $\zeta_{s_0}(z,t)$ has a meromorphic extension of the form $$\frac{1}{ {g_{s_0}(z,t) f_{s_0}(z,t)}}$$ on $Re(z) > \epsilon$.
\end{lemma}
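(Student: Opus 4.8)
The plan is to follow the strategy used for the meromorphic extension of $\eta(z)$ in the proof of Theorem~A, exploiting the block decomposition of $K_{z,t}$ and its Schur complement $W_{z,t}$. Throughout one works first in a right half-plane $Re(z)>\sigma_0$ (with $\sigma_0$ large, depending on $t$) where, by Property~(1) of Lemma~\ref{lem:transhyp} and the exponential growth rate of saddle connection paths, every series below converges absolutely; the statement of the lemma then follows by analytic continuation.

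First I would record the two elementary halves of the identity. By Lemma~\ref{lem:zetas}, for $Re(z)$ large,
$$\log\zeta_{s_0}(z,t) = \sum_{n=1}^\infty \frac{1}{n}\sum_{p\in\mathcal S_n} e^{-z\ell(p)+t\ell_{s_0}(p)}.$$
Writing $\mathcal S_n'(k):=\mathcal S_n\setminus\mathcal S_n(k)$ and splitting $\mathcal S_n=\mathcal S_n(k)\sqcup\mathcal S_n'(k)$ according to whether the string $p$ avoids or meets the first $k$ saddle connections, the sub-sum over $\mathcal S_n(k)$ is, by definition, exactly $-\log g_{s_0}(z,t)$. Hence it remains to prove
$$\sum_{n=1}^\infty\frac1n\sum_{p\in\mathcal S_n'(k)} e^{-z\ell(p)+t\ell_{s_0}(p)} = -\log f_{s_0}(z,t) = -\log\det(I-W_{z,t}).$$
Since $W_{z,t}$ is a finite matrix with $\|W_{z,t}\|<1$ for $Re(z)$ large (it is built from $A_{z,t}$ and $U_{z,t}(I-B_{z,t})^{-1}V_{z,t}$, all of which are small there by Property~(1) of Lemma~\ref{lem:transhyp}), the right-hand side equals $\sum_{m=1}^\infty \frac1m \mathrm{tr}(W_{z,t}^m)$.

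The heart of the proof is the combinatorial (``first return'', or inducing) identity of \cite{hk,cp},
$$\sum_{m=1}^\infty \frac1m \mathrm{tr}(W_{z,t}^m) = \sum_{n=1}^\infty\frac1n\sum_{p\in\mathcal S_n'(k)} e^{-z\ell(p)+t\ell_{s_0}(p)}.$$
The mechanism: a closed $W_{z,t}$-path $(i_1,\dots,i_m)$ of length $m$ unpacks canonically into a closed geodesic string in $\mathcal S_n'(k)$ by replacing each step $i_j\to i_{j+1}$ either by the direct transition $i_j i_{j+1}$ (the $A_{z,t}$ term) or by $i_j$ followed by an excursion through $\mathcal S(k)$ landing on $i_{j+1}$ (the $U_{z,t}(I-B_{z,t})^{-1}V_{z,t}$ term). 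Because in every block of $K_{z,t}$ the weight $e^{-z\ell(\cdot)+t\ell_{s_0}(\cdot)}$ is carried by the destination letter, this unpacking is weight-preserving; because admissibility of saddle connection paths is local (only consecutive pairs), admissibility is preserved; and every string in $\mathcal S_n'(k)$ arises in this way. The delicate point — where I expect the real effort to lie — is matching the harmonic weights $1/m$ and $1/n$: organising both sides over cyclic-rotation orbits, a string of minimal period $d\mid n$ with $b$ base-visits per period has $d$ distinct rotations and so contributes $\tfrac dn\,e^{-z\ell(p)+t\ell_{s_0}(p)}$ on the right, while on the left it is produced by $W_{z,t}$-paths of length $m=nb/d$, exactly $b$ of its rotations begin at a base letter, and each carries $\tfrac1m$, again giving $\tfrac{b}{nb/d}\,e^{-z\ell(p)+t\ell_{s_0}(p)}=\tfrac dn\,e^{-z\ell(p)+t\ell_{s_0}(p)}$, so the two sides agree term by term over cyclic orbits. (Morally this is the statement $\det(I-K_{z,t}) = \det(I-B_{z,t})\det(I-W_{z,t}) = g_{s_0}(z,t)f_{s_0}(z,t)$ together with $\zeta_{s_0}(z,t)=1/\det(I-K_{z,t})$, made rigorous without ever forming the determinant of an infinite matrix.)

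Combining the three displays gives $\zeta_{s_0}(z,t) = 1/\bigl(g_{s_0}(z,t)f_{s_0}(z,t)\bigr)$ on a right half-plane. Finally, by the preceding lemma, for $k$ sufficiently large and $|t|<\epsilon$ both $f_{s_0}(z,t)$ and $g_{s_0}(z,t)$ are analytic on $Re(z)>\epsilon$, and $g_{s_0}(z,t)=\exp(\cdots)$ is nowhere zero there; hence $1/\bigl(g_{s_0}(z,t)f_{s_0}(z,t)\bigr)$ is meromorphic on $Re(z)>\epsilon$ and, agreeing with $\zeta_{s_0}(z,t)$ wherever the latter converges, furnishes the asserted meromorphic extension. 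The only genuine obstacle is the bookkeeping in the combinatorial identity above; the analytic parts are routine given the preceding lemmas.
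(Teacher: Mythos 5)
Your proof is correct and follows the same high-level route as the paper's: split $\log\zeta_{s_0}$ according to whether a closed string avoids or meets the first $k$ saddle connections, identify the former with $-\log g_{s_0}$, and identify the latter with $-\log\det(I-W_{z,t})$ via the Schur-complement factorization $\det(I-K_{z,t})=\det(I-B_{z,t})\det(I-W_{z,t})$ that underlies \cite{hk}. One worthwhile difference: the paper shortcuts this by asserting the \emph{termwise} trace identity $\hbox{tr}(K_{z,t}^n)=\hbox{tr}(B_{z,t}^n)+\hbox{tr}(W_{z,t}^n)$, which is not literally true for a fixed $n$ (a closed $W$-path of $m$ steps unpacks into a closed saddle connection string whose word length generally exceeds $m$); only the sums $\sum_n \hbox{tr}(\cdot)^n/n$ match. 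You correctly locate this mismatch as the ``delicate point'' and resolve it by the cyclic-orbit / base-visit bookkeeping, showing each cyclic class of period $d$ with $b$ base visits per period contributes $\tfrac{d}{n}e^{-z\ell(p)+t\ell_{s_0}(p)}$ to both sides. So your argument fills in the step the paper passes over and is, in that respect, the more complete version of the same proof.
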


%\begin{proof}
%Following Hofbauer-Keller \cite{hk} and using the second form of $\zeta_{s_0}(z,t)$ in Lemma \ref{lem:zetas}, we can write 
%$$\frac{1}{\zeta_{s_0}(z,t)} = {g_{s_0}(z,t) f_{s_0}(z,t)}$$
 %which is analytic on the domain described by the previous lemma after taking $k$ sufficiently large.
%\end{proof}
\begin{proof}
By equation (6.1), for $Re(z)>h$ and $|t|$ sufficiently small, we can rewrite $\zeta_{s_0}(z,t)$ in terms of $K_{z,t}$ as follows
$$\zeta_{s_0}(z,t)=\exp\left(\sum_{n=1}^\infty \frac{\hbox{tr}(K_{z,t}^n)}{n}\right),$$
where given a (countable) matrix $A$ we define the formal sum $\hbox{tr(A)}:=\sum_{i=1}^\infty A(i,i)$. Similarly, on $Re(z)>h$ we can formally write
$$g_{s_0}(z,t)=\exp\left(-\sum_{n=1}^\infty \frac{\hbox{tr}(B_{z,t}^n)}{n}\right).$$
Next, for $Re(z)>h$  and $|t|$ sufficiently small, we can write
$$f_{s_0}(z,t)=\det(I-W_{z,t})=\exp\left(-\sum_{n=1}^\infty \frac{\hbox{tr}(W_{z,t}^n)}{n}\right).$$
%Note that for $t=0$,   $\det(I-W_{z,0})\neq 0$ on $Re(z)>h$.
% by definition of $h$.

We claim that $\hbox{tr}(K_{z,t}^n)=\hbox{tr}(B_{z,t}^n)+\hbox{tr}(W_{z,t}^n)$. To see this, first note that 
$\hbox{tr}(B_{z,t}^n)$ is the sum of exponentially weighted oriented edge strings in  $\mathcal E_n(k)$ and 
$\hbox{tr}(W_{z,t}^n)$ is the sum of exponentially weighted oriented edge strings with at least one edge in  
the first $k$ saddle connections. 

By combining  the above observations, it follows that for  $Re(z)>h$ and $|t|$ sufficiently small,
we can write
$$
\begin{aligned}
{\zeta_{s_0}(z,t)}&=\exp\left(\sum_{n=1}^\infty \frac{\hbox{tr}(K_{z,t}^n)}{n}\right)\cr
&=\exp\left(\sum_{n=1}^\infty \frac{\hbox{tr}(B_{z,t}^n)}{n}\right)\exp\left(\sum_{n=1}^\infty \frac{\hbox{tr}(W_{z,t}^n)}{n}\right)\cr
&=\frac{1}{g_{s_0}(z,t) f_{s_0}(z,t)}.
\end{aligned}
$$

By the previous Lemma, $f_{s_0}(z,t)$ and $g_{s_0}(z,t)$ are analytic on $Re(z)>\epsilon$, with  $|t| < \epsilon$ and for $k$ sufficiently large, hence the result follows.
\end{proof}

Fix $\epsilon<h$, $|t|<\epsilon$ and let $k$ be sufficiently large so that $1/\zeta_{s_0}(z,t)=g_{s_0}(z,t)f_{s_0}(z,t)$ is analytic on $Re(z)>\epsilon$. To proceed, we need to understand the location of the poles of the extension of $\zeta_{s_0}(z,t)$ on $Re(z)>\epsilon$. Note that $g_{s_0}(z,t)$ is non-zero and hence poles of the extension of $\zeta_{s_0}(z,t)$ correspond to the zeros of $f_{s_0}(z,t)$ in $Re(z) > \epsilon$, i.e. the values of $z$ such that 1 is an eigenvalue of $W_{z,t}$.

The next lemma states that properties analogous to those  required of $\eta(z)$ in the proof of Theorem A also hold for the zeta functions.

\begin{lemma}\label{lem:zetaproperties}
\begin{enumerate}
\item
The meromorphic extension of  $\zeta(z)$ is analytic for $Re(z) > h$, with a simple pole at $z=h$ which has positive residue, and there are no other poles on the line $Re(z)=h$.
\item 
Fix $s_0 \in \mathcal S$.
Providing $\epsilon > 0$ is sufficiently small and  $t \in (-\epsilon, \epsilon)$,
the meromorphic extension of $\zeta_{s_0}(z,t)$ has  a simple pole at the real value $z=p_{s_0}(t)>0$ where $p_{s_0}:(-\epsilon, \epsilon) \to \mathbb R$ is analytic with $p_{s_0}(0)=h$ and $p_{s_0}'(0) \neq 0$.  
Furthermore, the extension   of $\zeta_{s_0}(z,t)$  is analytic on $Re(z)>p_{s_0}(t)$ and there are no other poles on the line $Re(z)=p_{s_0}(t)$.
\end{enumerate}
\end{lemma}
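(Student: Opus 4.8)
The plan is to establish Lemma~\ref{lem:zetaproperties} by transporting the spectral analysis of the finite matrices $W_{z,t}$ to the zeta functions via the factorization $1/\zeta_{s_0}(z,t) = g_{s_0}(z,t) f_{s_0}(z,t)$ from Lemma~\ref{zetaext}, in direct analogy with Lemma~\ref{thmAextn}. Since $g_{s_0}(z,t)$ is analytic and nowhere zero on $\mathrm{Re}(z) > \epsilon$ (by the previous lemma), all the analytic structure of the extension of $\zeta_{s_0}(z,t)$ comes from the zeros of $f_{s_0}(z,t) = \det(I - W_{z,t})$, i.e.\ the values of $z$ at which $1$ is an eigenvalue of the finite irreducible nonnegative matrix $W_{z,t}$.

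First I would treat part (1), the case $t = 0$, where $W_{z,0} = W_z$ is exactly the matrix from \S3. For real $\sigma > \epsilon$ the matrix $W_\sigma$ is nonnegative and irreducible (Property (2) of Lemma~\ref{lem:transhyp}), so Perron--Frobenius gives a simple maximal eigenvalue $\lambda(\sigma)$; as in the proof of Lemma~\ref{thmAextn}, $\lambda(\sigma)$ is strictly decreasing in $\sigma$ because $\partial_\sigma \lambda(\sigma) = u \cdot W_\sigma' v < 0$ with $u,v$ the positive eigenvectors and $W_\sigma'$ having strictly negative entries, and one knows $\lambda(h) = 1$ by the growth rate identity $h = \lim_{T\to\infty} \tfrac1T \log \pi(T)$. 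Hence $f(z,0)$ has a simple zero at $z = h$, giving $\zeta(z)$ a simple pole there with positive residue, and $\zeta(z)$ is analytic for $\mathrm{Re}(z) > h$. The absence of other poles on $\mathrm{Re}(z) = h$ is exactly the argument already used for $\eta(z)$: compare diagonal entries of powers of $W_h$ and $W_{h+iy}$, which are Dirichlet series in $e^{-h\ell(q)}$ and $e^{-(h+iy)\ell(q)}$ over closed saddle connection paths, and invoke Wielandt's theorem together with Property (3) of Lemma~\ref{lem:transhyp} to conclude $y = 0$.

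For part (2) I would introduce the two-real-parameter family $\sigma \mapsto \lambda(\sigma, t)$ giving the Perron eigenvalue of $W_{\sigma,t}$ for $\sigma > \epsilon$ and $|t| < \epsilon$. The equation $\lambda(\sigma,t) = 1$ implicitly defines the leading pole $z = p_{s_0}(t)$. Analyticity of $p_{s_0}$ near $t = 0$ follows from the analytic implicit function theorem: the entries of $W_{\sigma,t}$ are analytic in $(\sigma,t)$, hence so is the simple Perron eigenvalue $\lambda$ (simple eigenvalues depend analytically on parameters), and $\partial_\sigma \lambda|_{(h,0)} = u \cdot (\partial_\sigma W)|_{(h,0)} v < 0 \neq 0$ supplies the nonvanishing partial derivative needed to solve for $\sigma$ as a function of $t$; since $\lambda(h,0) = 1$ we get $p_{s_0}(0) = h$. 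For $p_{s_0}'(0) \neq 0$ I would differentiate $\lambda(p_{s_0}(t), t) = 1$ at $t = 0$ to get $p_{s_0}'(0) = -(\partial_t \lambda)/(\partial_\sigma \lambda)$, and observe $\partial_t \lambda|_{(h,0)} = u \cdot (\partial_t W)|_{(h,0)} v$, where $\partial_t W_{z,t}$ has entries weighted by $\ell_{s_0}(s') \geq 0$, not all zero (the saddle connection $s_0$ genuinely appears in closed paths by Property (2)), so this is strictly positive; hence $p_{s_0}'(0) < 0$. Simplicity of the pole at $p_{s_0}(t)$ follows from simplicity of the Perron eigenvalue, and the no-other-poles-on-the-line statement is the same Wielandt/Property-(3) argument as before applied to $W_{p_{s_0}(t) + iy,\, t}$ versus $W_{p_{s_0}(t),\,t}$, valid since for small $|t|$ the perturbed matrix is still irreducible and its diagonal power-entries are Dirichlet series with the same length spectrum up to the $t$-reweighting, which does not affect the arithmetic obstruction.

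The main obstacle I anticipate is not any single step but making the perturbation argument fully rigorous uniformly in the infinite-dimensional reduction: one must check that for $|t|$ small the cutoff size $k$ can be chosen once and for all so that $W_{z,t}$ is genuinely finite, irreducible, and varies analytically, and that the identity $1/\zeta_{s_0}(z,t) = g_{s_0} f_{s_0}$ persists on the relevant region — this is handled by the two preceding lemmas, but the interplay of ``$k$ large depending on $\epsilon$'' with ``$t$ small depending on $\epsilon$'' needs to be stated carefully. The analytic dependence of the Perron eigenvalue and eigenvectors on $(\sigma,t)$, and the resulting implicit-function argument, are then routine, and the spectral positivity computations mirror those already carried out for $\eta(z)$ in Lemma~\ref{thmAextn} and Lemma~4.3 of \cite{cp}.
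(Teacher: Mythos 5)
Your approach is essentially the same as the paper's: factor $1/\zeta_{s_0}(z,t) = g_{s_0}(z,t)f_{s_0}(z,t)$, observe that $g_{s_0}$ is analytic and non-vanishing so poles of $\zeta_{s_0}$ come from zeros of $\det(I-W_{z,t})$, then apply Perron--Frobenius and the (analytic) implicit function theorem to the Perron eigenvalue $\lambda(\sigma,t)$ of the finite irreducible matrix $W_{\sigma,t}$, with the Wielandt argument ruling out other poles on the critical line.

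There is, however, a sign slip at the crucial final step of part (2). You correctly observe that $\partial_t W_{z,t}$ has nonnegative entries (weighted by $\ell_{s_0}(s')\geq 0$), hence $\partial_t\lambda|_{(h,0)} = u\,(\partial_t W)\,v > 0$, and that $\partial_\sigma\lambda|_{(h,0)} < 0$. But then
$$
p_{s_0}'(0) \;=\; -\frac{\partial_t\lambda|_{(h,0)}}{\partial_\sigma\lambda|_{(h,0)}} \;=\; -\frac{(+)}{(-)} \;>\; 0,
$$
not $<0$ as you wrote. The sign matters: $v(s_0)=p_{s_0}'(0)$ is identified in Theorem~C with a probability weight, so it must be strictly positive, not negative. (Incidentally, the paper's own proof asserts $\partial_t\lambda < 0$, which is inconsistent with the definition of $K_{z,t}$ and with its own conclusion $p_{s_0}'(0)>0$; you have the correct signs for the two partial derivatives but then misread the quotient.) With that one-line correction your argument is complete and matches the paper's.
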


\begin{proof}
%We begin with  Part 1. 
%Providing $Re(z)>h$, the series in the expansion  (6.1) of $\zeta(z)$ converges and we conclude it is analytic.   Moreover, for $z$ real the terms in the expansion within the exponential  for the zeta function are positive and we deduce that there is a pole at $h$ since
%$
%h = \lim_{T \to \infty} \frac{1}{T}\log \pi(T).
%$
%The poles for $\zeta(z)$ in $Re(z) > \epsilon$ (with $t=0$) occur when  the $k \times k$ matrix  $A_{z,0}  +  U_{z,0} (I-B_{z,0} )^{-1}V_{z,0} $ has $1$ as an eigenvalue.
%Note that when $z=h$, each of  the $k^2$ entries in the matrices are  Dirichlet series with positive terms.  Consider $z=h+iy$ for some $y\in\mathbb R$. By a consideration of the absolute values of these terms in the Dirichlet series and using the assumption that Property (3) holds, we can conclude from the triangle inequality that if the matrix for 
%$z= h+iy$ has eigenvalue $1$, then $y=0$ (See \cite{cp} for a similar argument).
For part 1,  the growth rate of $\pi(T)$ being equal to $h$ implies that $\zeta(z)$ is analytic for $Re(z) > h$. The other properties follow from the proof of Lemma \ref{thmAextn}.
%More generally, by   Lemma \ref{zetaext} (with $t=0$)   the poles for $\zeta(z)$  occur at those  $z$ such that $1$ is an eigenvalue for $W_z = W_{z,0}$. For $Re(z) =h$ with $z\neq h$  we have already seen in the  proof of Lemma \ref{thmAextn} that this cannot be the case.

For part 2,  by analytic perturbation theory (see \cite{kato}),
 the matrix 
  $W_{z,t} $ has an eigenvalue $\lambda(z,t)$ with a 
   bi-analytic dependence,  
   for $z$ close to $h$ and $|t|$ sufficiently small, such that $\lambda(h,0)=1$.
   Furthermore, for $\sigma>0$,  $\frac{\partial \lambda(\sigma,0)}{\partial \sigma}|_{\sigma=h} = u A v< 0$ where 
$u, v >0$ are the normalized left and right eigenvectors, respectively,  of $W_{z,0}$ for the eigenvalue $1$, and $A(i,j) = \frac{\partial W_{\sigma}(i,j)}{\partial \sigma}|_{\sigma=h} < 0$ (cf. proof of
Lemma \ref{thmAextn},  or 
 Lemma 4.3 of \cite{cp}).
Since  by  Lemma \ref{zetaext}   the poles of $\zeta_{s_0}(z,t)$ occur where  the matrix  $W_{z,t} $ has $1$ as an eigenvalue, we can apply the Implicit Function Theorem to $\lambda(p_{s_0}(t),t) = 1$ to find an analytic solution $p_{s_0}(t)$.   The Implicit Function Theorem also gives
$$p_{s_0}'(0) = - \frac{\partial \lambda(h,t)}{\partial t}|_{t=0}
/ \frac{\partial \lambda(\sigma,0)}{\partial \sigma}|_{\sigma=h}  > 0$$
since $\frac{\partial \lambda(h,0)}{\partial t}|_{t=0} = u B v < 0$ where 
%$u_{z}, v_{z}>0$ are the left and right eigenvectors of $W_{z}$ for the eigenvalue $\lambda(z)$, and 
$B(i,j) = \frac{\partial W_{h,t}(i,j)}{\partial t}|_{t=0} < 0$ (cf. proof of
Lemma \ref{thmAextn} or 
 Lemma 4.3 of \cite{cp}).
 The final part of lemma follows by a similar argument to 
 the proof of Lemma \ref{thmAextn}.
%$\frac{\partial \lambda(z,t)}{\partial z} = u_{z,t} W_{z,t}' v_{z,t} < 0$ where 
%$u_{z,t}, v_{z,t}>0$ are the left and right eigenvectors of $W_{z,t}'$ for the eigenvalue $\lambda(z,t)$, and $W_{z,t}'(i,j) = \frac{\partial W_{z,t}(i,j)}{\partial z}$ (cf. proof of
%Lemma \ref{thmAextn} or 
% Lemma 4.3 of \cite{cp}).

%For $|t|$ sufficiently small 
%the existence and simplicity 
% of the pole $z=p_{s_0}(t)$ follow from the uniform strict monotonicity 
% $z\mapsto \lambda(z,t)$
 % of 
%he maximal eigenvalue  for
%$W_{z,t}$. In particular,  we have uniform bounds on  $\frac{\partial \lambda(z,t)}{\partial z} = u_{z,t} W_{z,t}' v_{z,t} < 0$ where 
%$u_{z,t}, v_{z,t}>0$ are the left and right eigenvectors of $W_{z,t}'$ for the eigenvalue $\lambda(z,t)$, and $W_{z,t}'(i,j) = \frac{\partial W_{z,t}(i,j)}{\partial z}$ (cf. proof of
%Lemma \ref{thmAextn} or 
 %Lemma 4.3 of \cite{cp}).
%When $t=0$ the zeta function reduces to $\zeta_{s_0}(z,0) = \zeta(z)$ and there is a simple pole at $z=h$, by part 1, corresponding to $1$ being a simple eigenvalue for $W_h = W_{z,0}$.  
%We can now apply the   implicit function theorem to $\lambda_{s_0}(p(t),t) = 1$
%to deduce the analyticity of  $p(t)$ since 
%More precisely, we have  by   analytic perturbation theory (see \cite{kato}) 
%that $\lambda(z,t)$ is real analytic in $z$ and $t$ and 
% $\frac{\partial \lambda(z,0)}{\partial z}|_{z=h} \neq 0$
%its derivative 

%For $Re(z) > $ 
\end{proof}

%\subsection{Asymptotic formulae}
\subsection{Proof of  part 1 of Theorem C}
Having established the properties of the complex function $\zeta(z)$, the derivation of the asymptotic formula
in part 1 of Theorem C
 follows a classical route (cf. \cite{parry2}, after some trivial corrections).
Using (6.2) with $t=0$ we can write 
$$
- \frac{\zeta'(z)}{\zeta(z)} = \sum_{n=1}^\infty  \sum_{q\in\mathcal Q} 
\ell(q)
e^{-z n\ell(q)}
= \int_0^\infty  e^{-zT} dF(T)\eqno(6.3)
$$
where
$$
F(T)  := \sum_{n \ell(q) \leq T} \ell(q) = 
\sum_{ n \ell(q)  \leq T} \ell(q)  \left[
\frac{T}{\ell(q)}
\right] \leq \pi(T) T,  \eqno(6.4)
$$
with the summation over pairs $(n,q) \in \mathbb N \times \mathcal Q$ provided $n \ell(q)  \leq T$, 
 and $\pi(T) = 
%\sum_{e^{\ell(q) } \leq T} 1
% $\rho(t) =
 \hbox{\rm Card}
\{q \in \mathcal Q \hbox{ : }  \ell(q) \leq t\}$.

By part 1 of Lemma \ref{lem:zetaproperties}
we can write $\zeta(z) = \psi(z)/(z-h)$ where  $\psi(z)$  is analytic in a neighbourhood of $Re(z) \geq h$ and non-zero at $h$. Thus 
$$
 \frac{\zeta'(z)}{\zeta(z)} =  \frac{-1}{z-h} +\frac{\psi'(z)}{\psi(z)}. \eqno(6.5)
$$
Comparing (6.3) and (6.5) we can apply  Theorem \ref{tauberian} to deduce that 
$F(T) \sim e^{hT}/h$ as $T \to \infty$. Using  (6.4), it follows that  $\liminf_{T\to \infty} \frac{\pi(T)}{e^{hT}/hT} \geq 1$.

For any $\sigma>h$ and sufficiently large $T > 0$ we can sum the geometric series in  (6.3) to bound
$$
-\frac{\zeta'(\sigma)}{\zeta(\sigma)}  \geq \sum_{\ell(q)  \leq T} \frac{1}{e^{\sigma\ell(q)}}\frac{\ell(q)}{\left(1-e^{-\sigma \ell(q)}\right)}
\geq 
 \sum_{\ell(q) \leq T} \frac{\ell(q)}{\sigma \ell(q)} 
 \frac{1}{e^{\sigma T}}
 =  \frac{1}{\sigma} \frac{\pi(T)}{e^{\sigma T}}.
$$
Thus for any $\sigma' > \sigma$ we have 

$$\frac{\pi(T)}{e^{\sigma'T}}\leq e^{(\sigma-\sigma')T}\sigma\left(-\frac{\zeta'(\sigma)}{\zeta(\sigma)}\right)\rightarrow 0\hbox{ as }x\rightarrow \infty.$$ 
Since $\sigma >h$ is arbitrary, we deduce that 
$ \pi(T)/e^{\sigma T} \to 0$ as $T \to \infty$.      Given $T$ sufficiently large, we choose $y< T$ such that 
$e^{\sigma y} = e^{hT}/T$ and write 
$$
\pi(T) - \pi(y) =  \sum_{y < \ell(q)  \leq T} 1 \leq  \sum_{\ell(q)  \leq T} \frac{\ell(q)}{y}
\leq \frac{F(T)}{y}.
$$
In particular,  by rearranging this inequality we can write 
$$
\pi(T) \frac{hT}{e^{h T}} \leq  \frac{h\pi(y)}{e^{\sigma y }} 
+ \frac{h F(T) }{e^{\sigma y} y} 
=  \frac{h\pi(y)}{e^{\sigma y }} 
+ \frac{h F(T) }{e^{hT} } \left( \frac{1}{\frac{h}{\sigma}- \frac{\log T}{\sigma T}}\right)
$$
so that $\limsup_{T \to \infty} \pi(T) \frac{hT}{e^{h T}}  \leq \frac{\sigma}{h}$.  Since $\sigma > h$ can be chosen arbitrarily, we deduce that $\pi(T) \sim \frac{e^{hT}}{hT}$, as required.

%This is close to the final asymptotic formula required for $\pi(T)$, and there is then a standard way to convert this into required estimate in   part 1 of Theorem C
%(following \S 2 of \cite{parry2} for eTample).
%As an intermediate step we can denote 
%$\Ti(T) = 
% \hboT{\rm Card}
%\{
%{(q, n)\in \mathcal Q\times \mathbb N \hbox{ : }  \ell(q) n \leq T}\}$ and then using integration by parts we can write 
%$$
%\xi(T) \sim \int_0^T \frac{1}{\log u} d\sigma(u) \sim \frac{e^{hT}}{hT} \hbox{ as } T \to \infty.
%$$
%Moreover, because of the exponential growth the primitive closed orbits predominate 
%and we can then deduce that $\pi(T) \sim \xi(T)$ as $T \to \infty$ 
%(see \cite{parry1})
%and  this completes the proof of

\subsection{Proof of part 2 of Theorem C}

The proof of part 2 is analogous to the proof of part 1 (compare with \S7 of \cite{parry2}). However, one difference is that we differentiate the appropriate zeta function with respect to the second variable $t$:

$$
%-\frac{\zeta_{j}'(z)}{\zeta(z)} 
- \frac{\partial}{\partial t} \log \zeta_{s_0}(z,t)
= \sum_{n=1}^\infty  \sum_{q\in\mathcal Q} 
\ell_{s_0}(q)
e^{-z n\ell(q)}
= \int_0^\infty  e^{-zT} dF_{s_0}(T)\eqno(6.7)
$$
where
$$
F_{s_0}(T) := \sum_{ \ell(q) n \leq T}\ell_{s_0} (q) \leq \pi_{s_0}(T) T \eqno(6.8)
$$
for $T>0$ and
$\pi_{s_0}(T) =  \sum_{ q\in \mathcal Q(T)} \frac{\ell_{s_0} (q)}{\ell(q)}$.
% $\rho(t) = \hbox{\rm Card}
%\{
%(\gamma, n) \hbox{ : }  n \geq 1 \hbox{ and } \ell(\gamma) n \leq t
%\}$

By part 2 of  Lemma \ref{lem:zetaproperties}, we can write $\zeta_{s_0}(z,t) = \psi_{s_0}(z,t)/(z-p_{s_0}(t))$ where  $\psi_{s_0}(z, t)$  is analytic in a neighbourhood of $Re(z) \geq p_{s_0}(t)$ and thus 
$$
 \frac{\partial}{\partial t} \log \zeta_{s_0}(z,t)|_{t=0}=  \frac{-p_{s_0}'(0)}{z-h} +\frac{\psi_{s_0}'(z,0)}{\psi_{s_0}(z,0)},\eqno(6.9)
$$
using the fact that $p_{s_0}(0)=h$.  We can write $v(s_0) = p_{s_0}'(0)$.

Comparing (6.7) and (6.9) we can apply 
%Ikehara-Wiener Tauberian  
Theorem \ref{tauberian} to deduce that 
$F_{s_0}(T) \sim v(s_0) e^{hT}$ as $T \to \infty$ and thus using (6.8) we have $\liminf_{T\to \infty} \frac{\pi_{s_0}(T)}{e^{hT}/hT} \geq v(s_0)$.

For any $\sigma>h$,  a similar argument to that in the previous subsection gives
$$
- \frac{\partial}{\partial t} \log \zeta_{s_0}(\sigma,t)|_{t=0}  \geq \sum_{\ell(q) \leq T}\frac{1}{\sigma}\frac{\ell_{s_0}(q)}{\ell(q)}\frac{1}{e^{\sigma T}}
\geq 
% \sum_{\ell(q) \leq x} \frac{\ell_{s_0}(q)}{\sigma \ell(q)} \frac{1}{e^{\sigma x}}
\frac{1}{\sigma} \frac{\pi_{s_0}(T)}{e^{\sigma T}}.
$$
Since  $\sigma> h$ is arbitrary, $\pi_{s_0}(T)/e^{\sigma T}\rightarrow 0$ as $T\rightarrow \infty$. Again, as in the previous subsection, we choose $y<T$ such that $e^{\sigma y}=e^{hT}{T}$ and write

$$
\pi_{s_0}(T) - \pi_{s_0}(y) =  \sum_{y < \ell(q)  \leq T} \frac{\ell_{s_0}(q)}{\ell(q)}
\leq \frac{F_{s_0}(T)}{y}
$$
and thus as before 
$$
\pi_{s_0}(T) \frac{hT}{e^{h T}} \leq 
%h \frac{\pi_{s_0}(y)}{e^{\sigma y }} 
%+ \frac{h F_{s_0}(x) }{e^{hx} } \frac{x}{y}
%= 
 \frac{h\pi_{s_0}(y)}{e^{\sigma y }} 
+ \frac{h F_{s_0}(T) }{e^{hT} } \left( \frac{1}{\frac{h}{\sigma}- \frac{\log T}{\sigma T}}\right)
$$
so that $\limsup_{T \to +\infty} \pi_{s_0}(T) \frac{hT}{e^{h T}}  \leq \frac{\sigma}{h} v(s_0)$.  Since $\sigma > h$ can be chosen arbitrarily we deduce that $\pi_{s_0}(T) \sim \frac{e^{hT}}{hT} v(s_0)$, as required.

Finally, we obtain part 2 of Theorem C as follows
$$\lim_{T\rightarrow \infty}\frac{\pi_{s_0}(T)}{\pi(T)}=\lim_{T\rightarrow \infty} \frac{(e^{hT}/hT)v(s_0)}{(e^{hT}/hT)}=v(s_0).$$

Part 3 of  Theorem C can be deduced from part 2. The measure $\nu$ obtained is singular with respect to the volume measure $\hbox{\rm (Vol)}_X$
(and thus with respect to $\mu$)  since one can show that the Borel set $Y= \cup_{s \in \mathcal S}s$ corresponding to the union of the saddle connections  has 
$\nu\left( Y \right) = 1$,
but $\hbox{\rm (Vol)}_X\left(Y \right) = 0$.

\section{Final comments and questions}

\begin{enumerate}
\item
Stronger asymptotic results might  involve error terms. 
For example,  if   there exist closed geodesics $q,q'\in\mathcal Q$ such that the ratio 
$\alpha = \frac{\ell(q)}{\ell(q')}$ is diophantine (i.e., there exists $\tau > 2$ such that 
$|\alpha - \frac{a}{b}| \geq \frac{1}{b^\tau}$ has only finitely many rational solutions $\frac{a}{b}$)
then  one could  show  that there  exists $\beta > 0$
such that $\pi(T) =  (e^{hT}/hT) \left(1+ O(T^{-\beta}) \right)$ as $T \to \infty$ (compare with \cite{ps}).  On the other hand, the error term  can never  be improved to an exponential error term, i.e., there is no $\delta > 0$ such that 
$\pi(T) =  \hbox{Li}(e^{hT})\left(1+ O(e^{-\delta T}) \right)$ as $T \to \infty$ 
(where $\hbox{Li}(T) = \int_2^T (\log u)^{-1} du$)
since this would necessarily  require  $\zeta(z)$ being non-zero and analytic on 
the domain $Re(z) > h -\delta$, except 
for a simple pole at $z=h$, which is incompatible with the matrix approach to the extension.
\item
Other potential strengthenings of the basic distribution theorem (Theorem C.3) might include, for example, a large deviation result \cite{kifer}.  
\item It should be straightforward  to modify our approach so as to weight the geodesics using the exponential of the integral along the geodesic of a 
 suitable function.  In this case the entropy would be replaced by a pressure function and the asymptotic counting function and distribution result would be replaced by correspondingly weighted versions (cf. \cite{parry3}).
 
\end{enumerate}

\Addresses

\end{document}